\newtheorem{theorem}{Theorem}
\newtheorem{lemma}[theorem]{Lemma}
\newtheorem{prop}[theorem]{Proposition}
\newtheorem{remark}{Remark}
\newtheorem{corollary}[theorem]{Corollary}
\newtheorem{claim}{Claim}
\newenvironment{proof-sketch}{\noindent{\bf Sketch of Proof}\hspace*{1em}}{\qed\bigskip}
\newcommand{\RR}{\mathbb R}
\newcommand{\NN}{\mathbb N}
\newcommand{\ZZ}{\mathbb Z}
\renewcommand{\leq}{\leqslant}
\renewcommand{\geq}{\geqslant}
\begin{document}

\title[$(p,2)$--equations asymmetric at both zero and infinity]{$(p,2)$--equations asymmetric at both zero and infinity}
\author[N.S. Papageorgiou]{Nikolaos S. Papageorgiou}
\address[N.S. Papageorgiou]{National Technical University, Department of Mathematics,
				Zografou Campus, Athens 15780, Greece \&  Institute of Mathematics, Physics and Mechanics,  1000 Ljubljana, Slovenia}
\email{\tt npapg@math.ntua.gr}
\author[V.D. R\u{a}dulescu]{Vicen\c{t}iu D. R\u{a}dulescu}
\address[V.D. R\u{a}dulescu]{Institute of Mathematics ``Simion Stoilow" of the Romanian Academy, P.O. Box 1-764, 014700 Bucharest, Romania \& Department of Mathematics, University of Craiova,  200585 Craiova, Romania \&  Institute of Mathematics, Physics and Mechanics,  1000 Ljubljana, Slovenia}
\email{\tt vicentiu.radulescu@imar.ro}
\author[D.D. Repov\v{s}]{Du\v{s}an D. Repov\v{s}}
\address[D.D. Repov\v{s}]{Faculty of Education and Faculty of Mathematics and Physics, University of Ljubljana  \&  Institute of Mathematics, Physics and Mechanics, 1000 Ljubljana, Slovenia}
\email{\tt dusan.repovs@guest.arnes.si}
\keywords{$p$-Laplacian, asymmetric reaction, resonance, Fu\v{c}ik spectrum, constant sign solutions, nodal solution, critical groups, Morse relation.\\
\phantom{aa} 2010 AMS Subject Classification: Primary: 35J20. Secondary: 35J60, 58E05}
\begin{abstract}
We consider a $(p,2)$--equation, that is, a nonlinear nonhomogeneous elliptic equation driven by the sum of a $p$-Laplacian and a Laplacian with $p>2$. The reaction term is $(p-1)$--linear but exhibits asymmetric behaviour at $\pm\infty$ and at $0^{\pm}$. Using variational tools, together with truncation and comparison techniques and Morse theory, we prove two multiplicity theorems, one of them providing sign information for all the solutions (positive, negative, nodal).
\end{abstract}
\maketitle

\section{Introduction}
Let $\Omega\subseteq\RR^\NN$ be a bounded domain with a $C^2$-boundary $\partial\Omega$. In this paper we study the following nonlinear nonhomogeneous Dirichlet problem
\begin{equation}\label{eq1}
	-\Delta_p u(z)-\Delta u(z) = f(z,u(z))\ \mbox{in}\ \Omega\ ,\ u|_{\partial\Omega}=0 ,\ 2<p.
\end{equation}

Here, $\Delta_p$ denotes the $p$-Laplace differential operator defined by
$$\Delta_p u(z) = {\rm div}\,(|Du|^{p-2}Du)\ \mbox{for all}\ u\in W^{1,p}_0(\Omega).$$

If $p=2$, then $\Delta_2=\Delta$ the Laplacian.

In problem (\ref{eq1}) the reaction term $f(z,x)$ is a Carath\'eodory function such that $f(z,0)=0$. We assume that $f(z,\cdot )$ exhibits $(p-1)$-linear growth near $\pm\infty$. However, the growth of $f(z,\cdot)$ is asymmetric near $\pm\infty$. More precisely, the quotient $\frac{f(z,x)}{|x|^{p-2}x}$ crosses at least the principal eigenvalue $\hat{\lambda}_1(p)>0$ of $(-\Delta_p,W^{1,p}_0(\Omega))$ as we move from $-\infty$ to $+\infty$ (crossing or jumping nonlinearity). In the negative direction we allow resonance with respect to $\hat{\lambda}_1(p)>0$, while in the positive direction resonance can occur with respect of any nonprincipal eigenvalue of $(-\Delta_p,W^{1,p}_0(\Omega))$. A similar asymmetric behaviour we have as $x\rightarrow 0^{\pm}$. This time the quotient $\frac{f(z,x)}{x}$ crosses $\hat{\lambda}_1(2)>0$. Under this double asymmetric setting, we prove a multiplicity theorem producing three nontrivial smooth solutions and provide sign information for all of them. A second multiplicity theorem is also proved without sign information for the third solution.

Equations involving the sum of a Laplacian and a $p$-Laplacian, arise in problems of mathematical physics, see Cherfils \& Ilyasov \cite{9} (plasma physics) and Benci, D'Avenia, Fortunato \& Pisani \cite{6} (quantum physics). Recently, there have been existence and multiplicity results for different classes of such equations. We mention the works of Aizicovici, Papageorgiou \& Staicu \cite{3}, Cingolani \& Degiovanni \cite{10}, Gasinski \& Papageorgiou \cite{13,15}, Papageorgiou \& R\u{a}dulescu \cite{21,22}, Papageorgiou, R\u{a}dulescu \& Repov\v{s} \cite{25}, Sun \cite{30}, Sun, Zhang \& Su \cite{31}, Yang \& Bai \cite{32}. In the aforementioned works, only Papageorgiou \& R\u{a}dulescu \cite{22} dealt with an asymmetric $p$-sublinear reaction term. The authors in \cite{22} considered
 a reaction term $f(z,x)$ such that the quotient $\frac{f(z,x)}{|x|^{p-2}x}$ crosses only the first eigenvalue $\hat{\lambda}_1(p)$ as we move from $-\infty$ to $+\infty$ and resonance is allowed at $-\infty$. At zero the behaviour of the quotient $\frac{f(z,x)}{x}$ is symmetric. Finally, in \cite{22} the multiplicity result does not produce nodal solutions. Concerning asymmetric sublinear problems, we also mention the semilinear works of D'Agui, Marano \& Papageorgiou \cite{11} (Robin problems with an indefinite and unbounded potential), and Recova \& Rumbos \cite{28} (Dirichlet problems with zero potential).

Our approach is variational, based on the critical point theory, combined with a
suitable truncation and comparison techniques and Morse theory (critical groups).

\section{Mathematical Background}

Let $X$ be a Banach space and $X^{*}$ its topological dual. By $\left\langle \cdot,\cdot\right\rangle$ we denote the duality brackets for the pair $(X^{*}, X)$. Given $\varphi\in C^{1}(X,\RR)$, we say that $\varphi$ satisfies the ``Cerami condition" (the ``C-condition" for short), if the following holds:
\begin{center}
``Every sequence $\{u_n\}_{n\geq1}\subseteq X$ such that $\{\varphi(u_n)\}_{n\geq1}\subseteq\RR$ is bounded and
$$(1+||u_n||)\varphi^{'}(u_n)\rightarrow0\ \mbox{in}\ X^{*}\ \mbox{as}\ n\rightarrow\infty,$$
admits a strongly convergent subsequence".
\end{center}

This is a compactness-type condition on the functional. It leads to a deformation theorem from which one can derive the minimax theory of the critical values of $\varphi$. One of the main results in this theory is the so-called ``mountain pass theorem" of Ambrosetti \& Rabinowitz \cite{5}, stated here in a slightly more general form (see Gasinski \& Papageorgiou \cite{12}).

\begin{theorem}\label{th1}
	Assume that $\varphi\in C^{1}(X,\RR)$ satisfies the C-condition, $u_0,u_1\in X, ||u_1-u_0||>r>0$,
	$$\max\{\varphi(u_o), \varphi(u_1)\} < \inf{[\varphi(u):||u-u_0||=r]=m_r}$$
	and $c=\inf_{\gamma\in\Gamma}{\max_{0\leq t\leq1}{\varphi(\gamma(t))}}$ with $\Gamma=\{\gamma\in C([0,1],X): \gamma(0)=u_0, \gamma(1)=u_1\}$. Then $c\geq m_r$ and $c$ is a critical value of $\varphi$ (that is, there exists $u\in X$ such that $\varphi(u)=c$, $\varphi'(u)=0$).
\end{theorem}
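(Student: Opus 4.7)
The plan is to prove the two conclusions separately. The inequality $c\geq m_r$ is a purely topological statement about paths joining $u_0$ to $u_1$, while the fact that $c$ is a critical value requires an indirect argument based on a deformation lemma derived from the C-condition.

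First I would establish $c\geq m_r$. For any $\gamma\in\Gamma$, the map $t\mapsto \|\gamma(t)-u_0\|$ is continuous, equals $0$ at $t=0$ and equals $\|u_1-u_0\|>r$ at $t=1$, so by the intermediate value theorem there exists $t_\gamma\in(0,1)$ with $\|\gamma(t_\gamma)-u_0\|=r$. Hence $\max_{t\in[0,1]}\varphi(\gamma(t))\geq \varphi(\gamma(t_\gamma))\geq m_r$, and taking infimum over $\gamma\in\Gamma$ gives $c\geq m_r$.

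Next, to show that $c$ is a critical value, I would argue by contradiction assuming that $K_c:=\{u\in X:\varphi'(u)=0,\ \varphi(u)=c\}$ is empty. The role of the C-condition is to supply the following deformation lemma: given $\bar\epsilon$ with $0<2\bar\epsilon<c-\max\{\varphi(u_0),\varphi(u_1)\}$, there exist $\epsilon\in(0,\bar\epsilon)$ and a continuous map $\eta:[0,1]\times X\to X$ with $\eta(0,\cdot)=\mathrm{id}_X$, $\eta(1,\varphi^{c+\epsilon})\subseteq \varphi^{c-\epsilon}$ (writing $\varphi^a=\{u\in X:\varphi(u)\leq a\}$), and $\eta(s,u)=u$ for all $s\in[0,1]$ whenever $\varphi(u)\leq c-2\bar\epsilon$. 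Since $\max\{\varphi(u_0),\varphi(u_1)\}<c-2\bar\epsilon$, the endpoints are fixed by $\eta(1,\cdot)$. I would then pick a near-optimal path $\gamma_0\in\Gamma$ with $\max_{t}\varphi(\gamma_0(t))\leq c+\epsilon$, set $\tilde\gamma(t):=\eta(1,\gamma_0(t))$, and observe that $\tilde\gamma\in\Gamma$ yet $\max_{t}\varphi(\tilde\gamma(t))\leq c-\epsilon$, contradicting the definition of $c$.

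The main obstacle is the deformation lemma itself. Its construction uses the C-condition to bound $\|\varphi'\|$ away from zero on an annular neighbourhood of $\varphi^{-1}(c)$, produces a pseudo-gradient vector field $V$ on the set of regular points, and defines the flow $\eta$ by integrating $-\chi(u)V(u)/(1+\|u\|)$ for a suitable Lipschitz cut-off $\chi$. The factor $1+\|u\|$ in the denominator is precisely what the C-condition is designed to handle: it ensures that the flow is defined for all time and that the decrease of $\varphi$ along orbits is uniform enough to push the level $c+\epsilon$ down to $c-\epsilon$ in finite time. Once this deformation machinery is in place, the contradiction argument above closes the proof; cf. Gasinski \& Papageorgiou \cite{12} for the detailed construction in the C-condition setting.
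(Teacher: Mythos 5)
Your argument is correct and is essentially the standard proof of this result. Note that the paper itself gives no proof of Theorem \ref{th1}: it is quoted as a known background result from Gasinski \& Papageorgiou \cite{12}, and the proof given there is exactly the one you outline --- the intermediate value theorem argument for $c\geq m_r$ (which also guarantees $c>\max\{\varphi(u_0),\varphi(u_1)\}$, so your choice of $\bar\epsilon$ is admissible), followed by the contradiction via the quantitative deformation lemma adapted to the Cerami condition, where the extra factor $\frac{1}{1+\|u\|}$ in the descent field compensates for the weaker compactness assumption.
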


In the study of (\ref{eq1}) we will use the Sobolev spaces $W^{1,p}_0(\Omega)$, $H^{1}_0(\Omega)$ and the Banach space $C^{1}_0(\overline{\Omega}) = \{u\in C^{1}(\overline{\Omega}): u|_{\partial\Omega}=0\}$.

By $||\cdot||$ we denote the norm of $W^{1,p}_0(\Omega)$. By Poincar\'e's inequality, the norm of $W^{1,p}_0(\Omega)$ can be defined by
$$||u||=||Du||_p\ \mbox{for all}\ u\in W^{1,p}_0(\Omega).$$

The Sobolev space $H^{1}_0(\Omega)$ is a Hilbert space and as above, the Poincar\'e inequality implies that we can choose as inner product
$$(u,h)=\int_{\Omega}(Du, Dh)_{\RR^{\NN}}dz\ \mbox{for all}\ u,h\in H^1_0(\Omega).$$

The corresponding norm is
$$||u||_{H^1_0(\Omega)}=||Du||_2\ \mbox{for all}\ u\in H^1_0(\Omega).$$

The space $C^1_0(\overline{\Omega})$ is an ordered Banach space with positive (order) cone given by
$$C_+=\{u\in C^1_0(\overline{\Omega}): u(z)\geq0\ \mbox{for all}\ z\in\overline{\Omega}\}.$$

This cone has a nonempty interior given by
$${\rm int}\, C_+ = \left\{u\in C_+: u(z)>0\ \mbox{for all}\ z\in\Omega, \left.\frac{\partial u}{\partial n}\right|_{\partial\Omega}<0\right\}.$$

Here, by $\frac{\partial u}{\partial n}$ we denote the normal derivative of $u$, with $n(\cdot)$ being the outward unit normal on $\partial\Omega$. Recall that $C^1_0(\overline{\Omega})$ is dense in both $W^{1,p}_0(\Omega)$ and in $H^1_0(\Omega)$.

We consider a function $f_0:\Omega\times\RR\rightarrow\RR$ which is Carath\'eodory, that is, for all $x\in\RR$ the mapping $z\mapsto f_0(z,x)$ is measurable and for almost all $z\in\Omega$ the function $x\mapsto f_0(z,x)$ is continuous. We assume that
$$|f_0(z,x)|\leq a_0(z)[1+|x|^{r-1}]\ \mbox{for almost all}\ z\in\Omega,\ \mbox{for all}\ x\in\RR,$$
with $a_0\in L^{\infty}(\Omega)$ and $1<r<p^{*}=\left\{\begin{array}{ll}
	\frac{Np}{N-p}\ & \mbox{if}\ p<N\\
	+\infty\ & \mbox{if}\ N\leq p
\end{array}\right.$ (the critical Sobolev exponent for $p$). We set $F_0(z,x) = \int^{x}_0 f_0(z,s)ds$ and consider the $C^{1}$-functional $\varphi_0: W^{1,p}_0(\Omega)\rightarrow\RR$ defined by
$$\varphi_0(u)=\frac{1}{p}||Du||^p_p + \frac{1}{2}||Du||^2_2 + \int_{\Omega} F_0(z,u)dz\quad \mbox{for all}\ u\in W^{1,p}_0(\Omega).$$

The next proposition is a special case of a more general result of Aizicovici, Papageorgiou \& Staicu \cite[Proposition 2]{2}. See also Papageorgiou \& R\u{a}dulescu \cite{23,24} for corresponding results for the Neumann and Robin problems. The result is essentially a byproduct of the regularity theory of Lieberman \cite[Theorem 1]{18}.

\begin{prop}\label{prop2}
	Assume that $u_0\in W^{1,p}_0(\Omega)$ is a local $C^{1}_0(\overline{\Omega})$-minimizer of $\varphi_0$, that is, there exists $\rho_0>0$ such that
	$$\varphi_0(u_0)\leq\varphi_0(u_0+h)\ \mbox{for all}\ h\in C^1_0(\overline{\Omega})\ \mbox{with}\ ||h||_{C^1_0(\overline{\Omega})}\leq\rho_0.$$
	Then $u_0\in C^{1,\alpha}_0(\overline{\Omega})$ for some $\alpha\in(0,1)$ and $u_0$ is a local $W^{1,p}_0(\Omega)$-minimizer of $\varphi_0$, that is, there exists $\rho_1>0$ such that
	$$\varphi_0(u_0)\leq\varphi_0(u_0+h)\ \mbox{for all}\ h\in W^{1,p}_0(\Omega)\ \mbox{with}\ ||h||\leq\rho_1.$$
\end{prop}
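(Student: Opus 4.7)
The plan is to argue by contradiction, adapting the classical Brezis--Nirenberg argument to the $(p,2)$-setting. Suppose $u_0$ is a local $C^1_0(\overline{\Omega})$-minimizer but not a local $W^{1,p}_0(\Omega)$-minimizer of $\varphi_0$. For each $n\in\NN$ I would consider the constrained minimization
$$m_n=\inf\{\varphi_0(u):\|u-u_0\|\leq 1/n\}.$$
Since $\varphi_0$ is sequentially weakly lower semicontinuous (the $p$-Dirichlet and Dirichlet terms are convex and continuous, and the subcritical growth $r<p^*$ makes $u\mapsto\int_\Omega F_0(z,u)\,dz$ sequentially weakly continuous via the Rellich--Kondrachov embedding) and the closed ball is weakly compact, $m_n$ is attained at some $y_n$ with $\|y_n-u_0\|\leq 1/n$; by the contradiction hypothesis, $\varphi_0(y_n)<\varphi_0(u_0)$ for $n$ large, so $y_n\neq u_0$.

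Next I would write down the Euler--Lagrange equation satisfied by $y_n$. If the ball constraint is inactive, $y_n$ weakly solves
$$-\Delta_p y_n-\Delta y_n=f_0(z,y_n)\ \mbox{in}\ \Omega,\qquad y_n|_{\partial\Omega}=0;$$
if it is active, there is a Lagrange multiplier $\lambda_n\geq 0$ yielding a perturbed quasilinear equation of the same $(p,2)$-type. In either case the principal part has the structure covered by Lieberman's nonlinear regularity theorem \cite{18}. A Moser-type bootstrap, based on the subcritical growth $|f_0(z,x)|\leq a_0(z)(1+|x|^{r-1})$ and the uniform $W^{1,p}_0$-boundedness of $\{y_n\}$, provides a uniform bound $\|y_n\|_\infty\leq M$; feeding this back into Lieberman's theorem produces a uniform estimate $\|y_n\|_{C^{1,\alpha}_0(\overline{\Omega})}\leq K$ for some $\alpha\in(0,1)$.

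By Arzel\`a--Ascoli, $\{y_n\}$ is then relatively compact in $C^1_0(\overline{\Omega})$. Since $y_n\to u_0$ in $W^{1,p}_0(\Omega)$, the only possible cluster point is $u_0$, hence $y_n\to u_0$ in $C^1_0(\overline{\Omega})$. Therefore $\|y_n-u_0\|_{C^1_0(\overline{\Omega})}\leq\rho_0$ for $n$ large, and the $C^1_0$-local minimizer hypothesis forces $\varphi_0(u_0)\leq\varphi_0(y_n)$, contradicting $\varphi_0(y_n)<\varphi_0(u_0)$. The same Moser--Lieberman chain applied to $u_0$ itself, which, being an unconstrained local minimizer of $\varphi_0$ in a $C^1_0$-neighborhood, weakly satisfies $-\Delta_p u_0-\Delta u_0=f_0(z,u_0)$, yields the remaining claim $u_0\in C^{1,\alpha}_0(\overline{\Omega})$.

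The main technical obstacle is the Euler--Lagrange equation when the ball constraint is active: a priori the Lagrange multiplier $\lambda_n$ could blow up or spoil the structural constants required by Lieberman's theorem. One must either show that the constraint is eventually inactive (so $\lambda_n=0$ and $y_n$ is an unconstrained critical point of $\varphi_0$) or rewrite the multiplier equation so that its ellipticity and growth constants remain uniformly controlled as $n\to\infty$. This is the delicate point where the $(p,2)$-structure must be exploited, and is essentially the content of the general argument in \cite{2}.
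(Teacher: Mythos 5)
The paper does not actually prove Proposition \ref{prop2}; it is imported verbatim as a special case of Aizicovici, Papageorgiou \& Staicu \cite[Proposition 2]{2}, resting on Lieberman's regularity theorem \cite{18}. Your sketch reconstructs precisely the argument of that cited source: direct minimization of $\varphi_0$ over small $W^{1,p}_0$-balls around $u_0$ (attained by weak lower semicontinuity, using that $r<p^*$ makes the $F_0$-term sequentially weakly continuous), a uniform Moser--Lieberman estimate giving relative compactness of the minimizers $y_n$ in $C^1_0(\overline\Omega)$, and the resulting $C^1$-convergence $y_n\to u_0$ contradicting the $C^1_0$-local minimality. So in approach you are aligned with the proof the paper is implicitly invoking.

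Two points keep this from being a complete proof. First, a minor one: with the paper's sign convention $\varphi_0(u)=\frac{1}{p}||Du||^p_p+\frac{1}{2}||Du||^2_2+\int_\Omega F_0(z,u)\,dz$, the Euler--Lagrange equation is $-\Delta_p y_n-\Delta y_n+f_0(z,y_n)=0$, not $-\Delta_p y_n-\Delta y_n=f_0(z,y_n)$; harmless, but worth fixing. Second, and substantively: the active-constraint case is exactly the heart of the matter, and you do not resolve it --- you only name it. When $||y_n-u_0||=1/n$ the multiplier rule yields $\varphi_0'(y_n)=\lambda_n A_p(y_n-u_0)$ with $\lambda_n\leq 0$, and one must (a) justify that the Lagrange multiplier rule applies in this Banach-space setting, (b) show $\{\lambda_n\}$ is bounded, and (c) check that the perturbed principal part $\mathrm{div}\,\bigl[|Dy_n|^{p-2}Dy_n+Dy_n-\lambda_n|D(y_n-u_0)|^{p-2}D(y_n-u_0)\bigr]$ still satisfies Lieberman's structure conditions with constants independent of $n$ (this uses that $u_0$ is already known to be $C^{1,\alpha}$ from the first part, so $Du_0$ is bounded). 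Deferring (a)--(c) to \cite{2} is consistent with what the paper itself does, but as a standalone blind proof the argument is incomplete at exactly the step you yourself flag as delicate.
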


For any $r\in(1,+\infty)$ let $A_r:W^{1,r}_0(\Omega)\rightarrow W^{-1,r'}(\Omega)=W^{1,r}_0(\Omega)^{*}\left(\frac{1}{r}+\frac{1}{r'}=1\right)$ be the map defined by
$$\langle A_r(u), h\rangle = \int_{\Omega}|Du|^{r-2}(Du,Dh)_{\RR^N}dz\ \mbox{for all}\ u,h\in W^{1,r}_0(\Omega).$$

From Motreanu, Motreanu and Papageorgiou \cite[Proposition 2.72, p. 40]{19} we have the following property.

\begin{prop}\label{prop3}
	The map $A_r$ is bounded (that is, maps bounded sets to bounded sets), continuous, strictly monotone (hence, maximal monotone, too) and of type $(S)_+$, that is,
	$$``u_n\xrightarrow{w}u\ \mbox{in}\ W^{1,r}_0(\Omega)\ \mbox{and}\ \limsup_{n\rightarrow\infty}\langle A_r(u_n), u_n-u\rangle \leq 0\Rightarrow u_n\rightarrow u\ \mbox{in}\ W^{1,p}_0(\Omega)."$$
\end{prop}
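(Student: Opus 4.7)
The plan is to establish the four properties in turn, exploiting the fact that $A_r$ is the G\^ateaux derivative of the strictly convex $C^{1}$ functional $u\mapsto\frac{1}{r}\|Du\|^{r}_{r}$ on $W^{1,r}_{0}(\Omega)$.

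\emph{Boundedness} follows immediately from H\"older's inequality: $|\langle A_r(u),h\rangle|\leq\|Du\|_{r}^{r-1}\|Dh\|_{r}$, so $\|A_r(u)\|_{*}\leq\|u\|^{r-1}$. For \emph{continuity}, if $u_n\to u$ in $W^{1,r}_{0}(\Omega)$ then, along any subsequence, passing to a further subsequence one has $Du_n\to Du$ a.e.\ with an $L^{r}$-dominant. The Nemytskii operator $\xi\mapsto|\xi|^{r-2}\xi$ from $L^{r}(\Omega,\RR^{\NN})$ into $L^{r'}(\Omega,\RR^{\NN})$ is continuous by dominated convergence, and a routine subsequence-of-subsequence argument promotes this to convergence of the full sequence $A_r(u_n)\to A_r(u)$ in $W^{-1,r'}(\Omega)$. \emph{Strict monotonicity} reduces to the pointwise inequality
$$\bigl(|\xi|^{r-2}\xi-|\eta|^{r-2}\eta,\xi-\eta\bigr)_{\RR^{\NN}}>0\quad\text{for all }\xi\neq\eta\in\RR^{\NN},$$
which is the strict convexity of $\xi\mapsto|\xi|^{r}/r$. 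Integrating over $\Omega$, and using Poincar\'e's inequality to conclude that $u\neq v$ in $W^{1,r}_{0}(\Omega)$ forces $Du\neq Dv$ on a set of positive measure, gives $\langle A_r(u)-A_r(v),u-v\rangle>0$. Maximal monotonicity is then automatic since $A_r$ is everywhere defined and continuous (hence demicontinuous).

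The main obstacle is the $(S)_+$ property. Suppose $u_n\xrightarrow{w} u$ in $W^{1,r}_{0}(\Omega)$ and $\limsup_n\langle A_r(u_n),u_n-u\rangle\leq 0$. Weak convergence together with $A_r(u)\in W^{-1,r'}(\Omega)$ yields $\langle A_r(u),u_n-u\rangle\to 0$, so subtracting and invoking monotonicity,
$$0\leq\langle A_r(u_n)-A_r(u),u_n-u\rangle\longrightarrow 0.$$
Hence the nonnegative integrand $\bigl(|Du_n|^{r-2}Du_n-|Du|^{r-2}Du,Du_n-Du\bigr)_{\RR^{\NN}}$ tends to $0$ in $L^{1}(\Omega)$; extracting an a.e.\ convergent subsequence and using the strict pointwise inequality forces $Du_n\to Du$ a.e. To upgrade this to norm convergence, I would invoke the sharp convexity estimates: for $r\geq 2$ one has $(|\xi|^{r-2}\xi-|\eta|^{r-2}\eta,\xi-\eta)\geq c_r|\xi-\eta|^{r}$, and for $1<r<2$ the weighted version $(|\xi|^{r-2}\xi-|\eta|^{r-2}\eta,\xi-\eta)\geq c_r|\xi-\eta|^{2}/(|\xi|+|\eta|)^{2-r}$ combined with H\"older's inequality applied to $|Du_n|+|Du|$ (which is bounded in $L^{r}$ by weak convergence). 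Either case delivers $\|Du_n-Du\|_{r}\to 0$, that is, $u_n\to u$ strongly, and the usual subsequence principle removes the initial extraction. The delicate point is the regime $1<r<2$, where the weight requires careful splitting of the domain; fortunately for the application to \eqref{eq1} only the cases $r=2$ and $r=p>2$ are needed, both falling under the simpler $r\geq 2$ estimate.
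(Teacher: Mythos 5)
Your proof is correct. Note, however, that the paper does not prove this proposition at all: it simply quotes it from Motreanu, Motreanu \& Papageorgiou \cite[Proposition 2.72, p.~40]{19}, so there is no in-paper argument to compare against. What you have written is the standard textbook proof (essentially the one in the cited reference): H\"older for boundedness, Nemytskii continuity via dominated convergence and the subsequence-of-subsequences trick, strict convexity of $\xi\mapsto|\xi|^r/r$ for strict monotonicity, and the Simon-type inequalities $(|\xi|^{r-2}\xi-|\eta|^{r-2}\eta,\xi-\eta)_{\RR^N}\geq c_r|\xi-\eta|^r$ for $r\geq2$ (resp.\ the weighted version plus H\"older for $1<r<2$) to upgrade $\langle A_r(u_n)-A_r(u),u_n-u\rangle\to0$ to norm convergence for the $(S)_+$ property. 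All steps are sound; the intermediate a.e.\ convergence extraction is actually superfluous once you invoke the quantitative estimates, and you correctly observe that only $r=2$ and $r=p>2$ are needed for the paper's application.
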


Note that if $p=2$, then $A_2=A\in\mathscr{L}(H^1_0(\Omega), H^{-1}(\Omega)).$

We will use the spectrum of $(-\Delta_p, W^{1,p}_0(\Omega))$ and the Fu\v{c}ik spectrum of $(-\Delta, H^1_0(\Omega)).$ So, let us recall some basic facts about them.

We start with the following nonlinear eigenvalue problem
\begin{equation}\label{eq2}
	-\Delta_r u(z)=\hat{\lambda}|u(z)|^{r-2} u(z)\ \mbox{in}\ \Omega,\ u|_{\partial\Omega}=0,\ 1<r<\infty.
\end{equation}

We say that $\hat{\lambda}\in\RR$ is an ``eigenvalue" of $(-\Delta_r,W^{1,r}_0(\Omega))$, if problem (\ref{eq2}) admits a nontrivial solution $\hat{u}\in W^{1,r}_0(\Omega)$, known as an ``eigenfunction" corresponding to $\hat{\lambda}$. There is a smallest eigenvalue $\hat{\lambda}_1(r)>0$ such that:
\begin{itemize}
	\item $\hat{\lambda}_1(r)$ is isolated in the spectrum $\hat{\sigma}(r)$ of $(-\Delta_r,W^{1,r}_0(\Omega))$ (that is, there exists $\epsilon>0$ such that $(\hat{\lambda}_1(r),\hat{\lambda}_1(r)+\epsilon)\cap\hat{\sigma}(r)=\emptyset$).
	\item $\hat{\lambda}_1(r)$ is simple (that is, if $\hat{u}, \tilde{u}\in W^{1,r}_0(\Omega)$ are eigenfunctions corresponding to $\hat{\lambda}_1(r)$, then there exists $\xi\in\RR\backslash\{0\}$ such that $\hat{u}=\xi\tilde{u}$).
	\begin{equation}\label{eq3}
		\bullet\ \ \hat{\lambda}_1(r)=\inf\left[\frac{||Du||^r_r}{||u||^r_r}:\ u\in W^{1,r}_0(\Omega),\ u\neq0\right].\hspace{6.2cm}
	\end{equation}
\end{itemize}

In (\ref{eq3}) the infimum is realized on the one-dimensional eigenspace corresponding to $\hat{\lambda}_1(r)$. The  aforementioned properties imply that the elements of this eigenspace have fixed sign. Moreover, the nonlinear regularity theory (see, for example, Gasinski \& Papageorgiou \cite[pp. 737-738]{12}), implies that all the eigenfunctions of $(-\Delta_r, W^{1,r}_0(\Omega))$ belong in $C^{1}_0(\overline{\Omega})$. By $\hat{u}_1(r)$ we denote the positive $L^{r}$-normalized (that is, $||\hat{u}_1(r)||_r=1$) eigenfunction corresponding to $\hat{\lambda}_1(r)>0$. The nonlinear strong maximum principle (see, for example, Gasinski \& Papageorgiou \cite[p. 738]{12}) implies that $\hat{u}_1(r)\in {\rm int}\, C_+$. An eigenfunction $\hat{u}\in C^1_0(\overline{\Omega})$ corresponding to an eigenvalue $\hat{\lambda}\neq\hat{\lambda}_1(r)$ is necessarily nodal (sign-changing). It is easily seen that the set $\hat{\sigma}(r)$ is closed. Since $\hat{\lambda}_1(r)>0$ is isolated, the second eigenvalue $\hat{\lambda}_2(r)>0$ is well-defined by
$$\hat{\lambda}_2(r)=\min\left[\hat{\lambda}\in\hat{\sigma}(r): \hat{\lambda}\neq\hat{\lambda}_1(r)\right].$$

To produce additional eigenvalues, we can use the Ljusternik-Schnirelmann minimax scheme. In this way we obtain a whole nondecreasing sequence of eigenvalues $\{\hat{\lambda}_k(r)\}_{k\geq1}$ of $(-\Delta_r,W^{1,r}_0(\Omega))$ such that $\hat{\lambda}_k(r)\rightarrow+\infty$ as $k\rightarrow\infty$. These eigenvalues are known as ``variational eigenvalues" and $\hat{\lambda}_1(r), \hat{\lambda}_2(r)$ are as described above. We do not know if the variational eigenvalues exhaust the spectrum of $(-\Delta_r, W^{1,r}_0(\Omega))$. This is the case if $r=2$ (linear eigenvalue problem) or if $N=1$ (ordinary differential equations). In the linear case $(r=2)$, the eigenspaces $E(\hat{\lambda}_k(2)),\ k\in\NN$, are finite dimensional subspaces of $C^1_0(\overline\Omega)$ and we have the following orthogonal direct sum decomposition
$$H^1_0(\Omega)=\overline{\bigoplus_{k\geq1}E(\hat{\lambda}_k(2))}.$$

When $r\neq2$ (nonlinear eigenvalue problem), the eigenspaces are only cones and there is no decomposition of the space $W^{1,r}_0(\Omega)$ in terms of them. This makes the study of problems driven by $-\Delta_r$ and resonant at higher parts of the spectrum, difficult to deal with.

We will also encounter a weighted version of the eigenvalue problem (\ref{eq2}). So, let $m\in L^{\infty}(\Omega),\ m(z)\geq0$ for almost all $z\in\Omega,\ m\not\equiv0$. We consider the following nonlinear eigenvalue problem
\begin{equation}\label{eq4}
	-\Delta_r u(z)=\tilde{\lambda}m(z)|u(z)|^{r-2}u(z)\ \mbox{in}\ \Omega,\ u|_{\partial\Omega}=0.
\end{equation}

Again $\tilde{\lambda}\in\RR$ is an eigenvalue of $(-\Delta_r, W^{1,r}_0(\Omega),m)$, if problem (\ref{eq4}) admits a nontrivial solution. We have a smallest eigenvalue $\tilde{\lambda}_1(r,m)>0$ which is isolated, simple and
\begin{equation}\label{eq5}
	\tilde{\lambda}_1(r,m) = \inf\left[\frac{||Du||^r_r}{\int_\Omega m|u|^r dz}: u\in W^{1,r}_0(\Omega), u\neq0\right].
\end{equation}

As before, the infimum is realized on the corresponding one dimensional eigenspace, the elements of which do not change sign. This fact and (\ref{eq5}) lead to the following monotonicity property of $m\rightarrow\tilde{\lambda}_1(r,m)$.
\begin{prop}\label{prop4}
	If $m,m'\in L^{\infty}(\Omega)\backslash\{0\},\ 0\leq m(z)\leq m'(z)$ for almost all $z\in\Omega$ and $m\not\equiv m',$ then $\tilde{\lambda}_1(r,m')<\tilde{\lambda}_1(r,m)$
\end{prop}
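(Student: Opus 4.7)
The plan is to use the variational characterization (\ref{eq5}) together with the fact that, in analogy with the unweighted problem, the infimum in (\ref{eq5}) is attained by an eigenfunction that belongs to $\mathrm{int}\, C_+$ and hence is strictly positive throughout $\Omega$. The key identity is to pick the right eigenfunction as the test function: one takes the first eigenfunction associated with the \emph{smaller} weight $m$, and then uses it as an admissible competitor in the Rayleigh quotient for the larger weight $m'$.

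Concretely, first I would record that the infimum in (\ref{eq5}) (for the weight $m$) is achieved on the one-dimensional eigenspace corresponding to $\tilde{\lambda}_1(r,m)$. Let $\tilde{u}\in W^{1,r}_0(\Omega)$ be the positive, $L^r$-normalized eigenfunction for $(-\Delta_r, W^{1,r}_0(\Omega), m)$. By nonlinear regularity (Lieberman \cite{18}) one has $\tilde{u}\in C^1_0(\overline{\Omega})$, and by the nonlinear strong maximum principle (exactly as quoted in the text for the unweighted eigenfunctions $\hat{u}_1(r)$), one obtains $\tilde{u}\in\mathrm{int}\, C_+$; in particular $\tilde{u}(z)>0$ for all $z\in\Omega$. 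This step, which transfers the unweighted maximum principle to the weighted setting, is the main (but standard) technical point and is the step I expect to be the chief obstacle to make rigorous.

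With $\tilde{u}$ in hand, the rest is a short Rayleigh-quotient comparison. By construction,
\[
\tilde{\lambda}_1(r,m) \;=\; \frac{\|D\tilde{u}\|_r^r}{\int_\Omega m\,\tilde{u}^r\,dz}.
\]
Since $0\le m(z)\le m'(z)$ almost everywhere, $m\not\equiv m'$, and $\tilde{u}>0$ throughout $\Omega$, the function $(m'-m)\,\tilde{u}^r$ is nonnegative and not identically zero on $\Omega$, so
\[
\int_\Omega m\,\tilde{u}^r\,dz \;<\; \int_\Omega m'\,\tilde{u}^r\,dz.
\]
Using $\tilde{u}$ as a test function in the variational characterization (\ref{eq5}) for the weight $m'$ yields
\[
\tilde{\lambda}_1(r,m') \;\le\; \frac{\|D\tilde{u}\|_r^r}{\int_\Omega m'\,\tilde{u}^r\,dz} \;<\; \frac{\|D\tilde{u}\|_r^r}{\int_\Omega m\,\tilde{u}^r\,dz} \;=\; \tilde{\lambda}_1(r,m),
\]
which is exactly the desired strict monotonicity. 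Note that the direction of the test-function choice matters: if one instead used the eigenfunction associated with $m'$, one would only obtain an inequality of the form $\tilde{\lambda}_1(r,m)\le (\cdot)$ with a strict inequality $(\cdot)>\tilde{\lambda}_1(r,m')$, which does not chain into the required strict bound between the two eigenvalues. Selecting the eigenfunction for $m$ is what turns the monotonicity of the denominator into a strict monotonicity of $\tilde{\lambda}_1$.
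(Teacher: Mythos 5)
Your argument is correct and is exactly the one the paper has in mind: the text introduces Proposition \ref{prop4} as a consequence of the variational characterization (\ref{eq5}) together with the fact that the infimum is attained by a fixed-sign (indeed, by the strong maximum principle, everywhere positive) first eigenfunction, and you carry out precisely that Rayleigh-quotient comparison, correctly testing the quotient for the larger weight $m'$ with the eigenfunction associated with the smaller weight $m$. No gaps; your remark about which eigenfunction to choose is also accurate.
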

\begin{remark}
	For the linear eigenvalue problem (that is $r=2$), the spectrum consists of a sequence $\{\tilde{\lambda}_k(2,m)=\tilde{\lambda}_k(m)\}_{k\in\NN}$ of distinct eigenvalues such that
	$$\tilde{\lambda}_k(m)\rightarrow+\infty\ \mbox{as}\ k\rightarrow\infty.$$
\end{remark}

The eigenspaces $E(\tilde{\lambda}_k(2,m))$ have the unique continuation property, that is, if $u\in E(\tilde{\lambda}_k(2,m))$ and $u(\cdot)$ vanishes on a set of positive Lebesgue measure, then $u\equiv0$. This property leads to the following strict monotonicity property of $\tilde{\lambda}_k(2,\cdot)$:
\begin{gather}
	m,m'\in L^{\infty}(\Omega)\backslash\{0\},\ 0\leq m(z)\leq m'(z)\ \mbox{for almost all}\ z\in\Omega, m\not\equiv m' \Rightarrow \nonumber \\
	\tilde{\lambda}_k(2,m')<\tilde{\lambda}_k(2,m)\ \mbox{for all}\ k\in\NN .\nonumber
\end{gather}

Another related result is the following lemma, which is a consequence of the properties of $\hat{\lambda}_1(p)$ (see Motreanu, Motreanu \& Papageorgiou \cite[Lemma 11.3, p. 305]{19}).
\begin{lemma}\label{lemma5}
	If $\vartheta\in L^{\infty}(\Omega)$ and $\vartheta(z)\leq\hat{\lambda}_1(p)$ for almost all $z\in\Omega,\ \vartheta\not\equiv\hat{\lambda}_1(p)$, then there exists $\hat{c}>0$ such that
	$$\hat{c}||u||^p\leq||Du||^p_p - \int_{\Omega}\vartheta(z)|u|^p dz\ \mbox{for all}\ u\in W^{1,p}_0(\Omega).$$
\end{lemma}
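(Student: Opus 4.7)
My plan is a standard argument by contradiction using the variational characterization \eqref{eq3} of $\hat{\lambda}_1(p)$, together with the simplicity of $\hat{\lambda}_1(p)$ and the fact that the principal eigenfunctions belong to $\operatorname{int}C_+$. If the claimed inequality fails for every $\hat{c}>0$, I can find a sequence $\{u_n\}_{n\geq 1}\subseteq W^{1,p}_0(\Omega)$, normalized so that $\|u_n\|=1$, with
\[
\|Du_n\|_p^{p}-\int_\Omega \vartheta(z)|u_n|^p\,dz<\tfrac{1}{n}\quad\text{for all }n\geq 1.
\]
Since $\{u_n\}$ is bounded, a subsequence converges weakly in $W^{1,p}_0(\Omega)$ to some $u$, and strongly in $L^p(\Omega)$ by the compact embedding. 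Passing to the limit using the weak lower semicontinuity of $\|D\cdot\|_p^{p}$ and the $L^p$-convergence in the potential term yields
\[
\|Du\|_p^{p}\leq\int_\Omega\vartheta(z)|u|^p\,dz.
\]

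Next I would combine this with $\vartheta\leq\hat{\lambda}_1(p)$ and the Rayleigh-type formula \eqref{eq3} to sandwich:
\[
\hat{\lambda}_1(p)\|u\|_p^{p}\leq\|Du\|_p^{p}\leq\int_\Omega\vartheta(z)|u|^p\,dz\leq\hat{\lambda}_1(p)\|u\|_p^{p},
\]
forcing equality throughout. I would then split into two cases. If $u\equiv 0$, then the chain above gives $\|Du_n\|_p^{p}\to 0$, hence $u_n\to 0$ in $W^{1,p}_0(\Omega)$, contradicting $\|u_n\|=1$. If $u\not\equiv 0$, the equality $\|Du\|_p^{p}=\hat{\lambda}_1(p)\|u\|_p^{p}$ means $u$ realizes the infimum in \eqref{eq3}, so by simplicity $u=\xi\hat{u}_1(p)$ for some $\xi\neq 0$; since $\hat{u}_1(p)\in\operatorname{int}C_+$, we have $|u(z)|>0$ for all $z\in\Omega$.

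Finally, using $|u|>0$ almost everywhere together with the other equality
\[
\int_\Omega\bigl(\hat{\lambda}_1(p)-\vartheta(z)\bigr)|u|^p\,dz=0
\]
and the fact that the integrand is nonnegative a.e., I conclude that $\vartheta(z)=\hat{\lambda}_1(p)$ for almost all $z\in\Omega$, contradicting the hypothesis $\vartheta\not\equiv\hat{\lambda}_1(p)$. This completes the contradiction and produces the desired $\hat{c}>0$.

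The only subtle point, and the step I expect to require most care, is justifying the strong $L^p$-convergence and the weak lower semicontinuity passage that produces $\|Du\|_p^{p}\leq\int_\Omega\vartheta|u|^p\,dz$; everything else is a direct application of the known properties of $\hat{\lambda}_1(p)$ recorded in the excerpt, in particular simplicity and the sign property $\hat{u}_1(p)\in\operatorname{int}C_+$, which are what make the unique continuation-style argument in the final paragraph go through despite $p\neq 2$.
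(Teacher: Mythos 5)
Your argument is correct and is the standard proof of this inequality: the paper itself does not prove Lemma \ref{lemma5} but imports it from Motreanu, Motreanu \& Papageorgiou \cite[Lemma 11.3]{19}, where the proof is essentially the contradiction argument you give (normalize a minimizing sequence, pass to a weak limit using weak lower semicontinuity and the compact embedding into $L^p(\Omega)$, then use the equality case of the Rayleigh quotient, the simplicity of $\hat{\lambda}_1(p)$, and $\hat{u}_1(p)\in {\rm int}\, C_+$ to force $\vartheta\equiv\hat{\lambda}_1(p)$). The only point to make explicit is that a nonzero minimizer of the quotient in (\ref{eq3}) is necessarily a first eigenfunction, which is exactly the statement in the paper that the infimum in (\ref{eq3}) is realized precisely on the one-dimensional eigenspace of $\hat{\lambda}_1(p)$.
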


Since our problem is also asymmetric at zero, in our analysis, we will use the Fu\v{c}ik spectrum of $(-\Delta, H^1_0(\Omega))$. So, we consider the following linear eigenvalue problem
\begin{equation}\label{eq6}
	-\Delta u(z) = \alpha u^{+}(z) - \beta u^{-}(z)\ \mbox{in}\ \Omega,\ u|_{\partial\Omega}=0,
\end{equation}
where $u^{\pm}(\cdot)=\max\{\pm u(\cdot), 0\}$ (the positive and negative parts of $u$). By $\Sigma_2$ we denote the set of points $(\alpha, \beta)\in\RR^2$ for which problem (\ref{eq6}) admits a nontrivial solution. The set $\Sigma_2$ is called the ``Fu\v{c}ik spectrum" of $(-\Delta, H^1_0(\Omega))$. Let $\{\hat{\lambda}_k(2)\}_{k\in\NN}$ be the sequence of distinct eigenvalues of $(-\Delta, H^1_0(\Omega))$. While the spectrum of $(-\Delta, H^1_0(\Omega))$ is a sequence of points, the frame of the Fu\v{c}ik spectrum $\Sigma_2$ consists of a family of curves. In particular the lines $(\{\hat{\lambda}_1(2)\}\times\RR)\cup(\RR\times\{\hat{\lambda}_1(2)\})$ can be considered as the first curve of $\Sigma_2$. In fact this curve is isolated in $\Sigma_2$. For every $\ell\in\NN, \ell\geq2$, there are two decreasing curves $C_{\ell,1}, C_{\ell,2}$ (which may coincide) which pass through the point $(\hat{\lambda}_{\ell}(2), \hat{\lambda}_{\ell}(2))$ such that all points in the square $Q_{\ell}=(\hat{\lambda}_{\ell-1}(2), \hat{\lambda}_{\ell+1}(2))^2$ which are either in the region $I_{\ell,1}$ below both curves or in the region $I_{\ell,2}$ above the curves, do not belong to $\Sigma_2$ (these are the regions of type $I$). The status of the points between the two curves (when they do not coincide) is unknown in general. However, when $\hat{\lambda}_{\ell}(2)$ is a simple eigenvalue, points between the two curves are not in $\Sigma_2$. We mention that $\Sigma_2\subseteq\RR^2$ is closed with respect to the diagonal (that is, $(\alpha, \beta)\in\Sigma_2$ if and only if $( \beta,\alpha)\in\Sigma_2$). Also, $(\lambda,\lambda)\in\Sigma_2$ if and only if $\lambda=\hat{\lambda}_n(2)$ for some $n\in\NN$. As we already mentioned the lines $\{\hat\lambda_1(2)\}\times\RR$ and $\RR\times \{\hat\lambda_1(2)\}$ are contained in $\Sigma_2$. In the scalar case (that is, $N=1$) we have a complete description of the Fu\v{c}ik spectrum. For more information about $\Sigma_2$, we refer to Schechter \cite{29}.

Next, we recall some basic definitions and facts from Morse theory (critical groups). So, as before, $X$ is a Banach space, $\varphi\in C^1(X,\RR)$ and $c\in\RR$. We introduce the following sets
\begin{gather}
	K_{\varphi} = \{u\in X:\varphi'(u)=0\}, \nonumber \\
	K^c_{\varphi} = \{u\in K_{\varphi}: \varphi(u)=c\}, \nonumber \\
	\varphi^c = \{u\in X: \varphi(u)\leq c\} .\nonumber
\end{gather}

Let $(Y_1,Y_2)$ be a topological pair such that $Y_2\subseteq Y_1\subseteq X$ and $k\in\NN_0$. By $H_k(Y_1,Y_2)$ we denote the $k$th relative singular homology group with integer coefficients for the pair $(Y_1,Y_2)$. Suppose that $u\in K^c_{\varphi}$ is isolated. The critical groups of $\varphi$ at $u$ are defined by
$$C_k(\varphi,u)= H_k(\varphi^c\cap U, \varphi^c\cap U\backslash\{u\})\ \mbox{for all}\ k\in\NN_0,$$
where $U$ is a neighbourhood of $u$ such that $K_\varphi\cap\varphi^c\cap U=\{u\}$. The excision property of singular homology, implies that the above definition of critical groups is independent of the choice of the isolating neighbourhood $U$.
Suppose that $\varphi$ satisfies the $C$-condition and that $\inf\varphi(K_{\varphi})>-\infty$.
Let $c<\inf\varphi(K_\varphi)$. The critical groups of $\varphi$ at infinity are defined by
$$C_k(\varphi,\infty)=H_k(X,\varphi^c)\ \mbox{for all}\ k\in\NN_0.$$

This definition is independent of the choice of the level $c<\inf\varphi(K_\varphi)$. Indeed, suppose that $c'<c<\inf\varphi(K_\varphi)$. From  Motreanu, Motreanu \& Papageorgiou \cite[Corollary 6.35, p. 115]{19}, we have
\begin{align*}
	& \varphi^{c'}\ \mbox{is a strong deformation retract of}\ \varphi^{c}, \\
	\Rightarrow & H_k(X,\varphi^c) = H_k(X,\varphi^{c'})\ \mbox{for all}\ k\in\NN_0\ \mbox{(see \cite[Corollary 6.15(a), p. 145]{19})}.
\end{align*}

So, indeed $C_k(\varphi,\infty)$ is independent of the choice of the level $c<\inf\varphi(K_\varphi)$.

Now suppose that $\varphi\in C^1(X,\RR)$ satisfies the $C$-condition and that $K_\varphi$ is finite. We define
\begin{eqnarray*}
	&&M(t,u) = \sum_{k\in\NN_0}{\rm rank}\, C_k(\varphi,u)t^k\ \mbox{for all}\ t\in\RR, \mbox{all}\ u\in K_\varphi, \\
	&&P(t,\infty) = \sum_{k\in\NN_0}{\rm rank}\, C_k(\varphi, \infty)t^k\ \mbox{for all}\ t\in\RR.
\end{eqnarray*}

The Morse relation says that
\begin{equation}\label{eq7}
	\sum_{u\in K_\varphi} M(t,u) = P(t,\infty) + (1+t)Q(t)\ \mbox{for all}\ t\in\RR,
\end{equation}
where $Q(t)=\sum_{k\in\NN_0}\beta_k t^k$ is a formal series in $t\in\RR$, with nonnegative integer coefficients $\beta_k$.

We conclude this section by fixing our notation and introducing the hypotheses on the reaction term $f(z,x)$. Recall that if $u\in W^{1,p}_0(\Omega)$, we define
$$u^{\pm}(z)=\max\{\pm u(z),0\}.$$

We know that $u^{\pm}\in W^{1,p}_{0}(\Omega)$ and we have $u=u^+-u^-$, $|u|=u^++u^-$. By $|\cdot|_N$ we denote the Lebesgue measure on $\RR^N$ and given $f(z,x)$ a measurable function (for example, a Carath\'eodory function), we denote by $N_f(\cdot)$  Nemitsky (superposition) map corresponding to $f(\cdot,\cdot)$ and defined by
$$N_f(u)(\cdot)=f(\cdot,u(\cdot))\ \mbox{for all}\ u\in W^{1,p}_0(\Omega).$$

The hypotheses on $f(z,x)$ are the following:

\smallskip
$H(f):f:\Omega\times\RR\rightarrow\RR$ is a Carath\'eodory function such that $f(z,0)=0$ for almost all $z\in\Omega$ and
\begin{itemize}
	\item [(i)] for every $r>0$, there exists $a_r\in L^{\infty}(\Omega)_+$ such that
		$$|f(z,x)|\leq a_r(z)\ \mbox{for almost all}\ z\in\Omega,\ \mbox{for all}\ |x|\leq r;$$
	\item [(ii)] there exists $\eta\in L^{\infty}(\Omega), \eta(z)\geq\hat{\lambda}_1(p)$ for almost all $z\in\Omega$, $\eta\neq\hat{\lambda}_1(p)$ and $\hat{\eta},\hat{\vartheta}>0$ such that
		\begin{gather*}
			-\hat{\vartheta}\leq\liminf_{x\rightarrow-\infty}\frac{f(z,x)}{|x|^{p-2}x} \leq \limsup_{x\rightarrow-\infty}\frac{f(z,x)}{|x|^{p-2}x} \leq \hat{\lambda}_1(p) \\
			\eta(z) \leq \liminf_{x\rightarrow+\infty}\frac{f(z,x)}{|x|^{p-2}x} \leq \limsup_{x\rightarrow+\infty}\frac{f(z,x)}{|x|^{p-2}x} \leq \hat{\eta}
		\end{gather*}
		uniformly for almost all $z\in\Omega$;
	\item [(iii)] if $F(z,x)=\int^x_0 f(z,s)ds$, then $f(z,x)x - pF(z,x)\rightarrow+\infty$ as $x\rightarrow-\infty$ uniformly for almost all $z\in\Omega$ and there exists $M_0>0$ such that
		$$f(z,x)x - pF(z,x)\geq0\ \mbox{for almost all}\ z\in\Omega,\ \mbox{for all}\ x\geq M_0;$$
	\item [(iv)] there exists $0<\alpha<\hat{\lambda}_1(2)<\beta<\hat{\lambda}_2(2)$ and
		$$\lim_{x\rightarrow0^+}\frac{f(z,x)}{x}=\alpha,\ \lim_{x\rightarrow0^-}\frac{f(z,x)}{x}=\beta$$
		uniformly for almost all $z\in\Omega$ and for every $\rho>0$, there exists $\hat{\xi}_\rho>0$ such that for almost all $z\in\Omega$ the mapping $x\mapsto f(z,x) + \hat{\xi}_\rho x^{p-1}$ is nondecreasing on $[0, \rho]$.
\end{itemize}
\begin{remark}
	Hypothesis $H(f)(ii)$ implies that $f(z,\cdot)$ is a crossing nonlinearity. In fact we can cross any finite number of variational eigenvalues starting with $\hat{\lambda}_1(p)>0$. Note that in the negative direction we can have resonance with respect to $\hat{\lambda}_1(p)>0$, while in the positive direction resonance is possible with respect to any nonprincipal eigenvalue of $(-\Delta_p, W^{1,p}_0(\Omega))$. As we will see in the proof of Proposition \ref{prop8}, hypothesis $H(f)(iii)$ guarantees that at $-\infty$ the resonance with respect to $\hat{\lambda}_1(p)>0$, is from the left of the principal eigenvalue in the sense that
	$$\hat{\lambda}_1(p)|x|^p-pF(z,x)\rightarrow+\infty\ \mbox{as}\ x\rightarrow-\infty,\ \mbox{uniformly for almost all}\ z\in\Omega.$$

This makes the negative truncation of the energy functional of (\ref{eq1}) coercive. So, we can use the direct method of the calculus of variations. Hypothesis $H(f)(iv)$ implies that at zero, too, we have an asymmetric behaviour of the quotient $\frac{f(z,x)}{x}$.
\end{remark}

\section{Solutions of Constant sign}

In this section, using variational tools, we show that problem (\ref{eq1}) admits two nontrivial smooth solutions of constant sign (one positive and the other negative).

So, let $\varphi:W^{1,p}_0(\Omega)\rightarrow\RR$ be the energy functional for problem (\ref{eq1}) defined by
$$\varphi(u)=\frac{1}{p}||Du||^p_p + \frac{1}{2}||Du||^2_2 - \int_\Omega F(z,u)dz\ \mbox{for all}\ u\in W^{1,p}_0(\Omega).$$

Evidently, $\varphi\in C^1(W^{1,p}_0(\Omega),\RR)$. Also, we consider the positive and negative truncations of $f(z,\cdot)$, that is, the Carath\'eodory function
$$f_\pm(z,x)=f(z,\pm x^\pm).$$

We set $F_\pm(z,x)=\int^x_0 f_\pm(z,s)ds$ and consider the $C^1$-functionals $\varphi_\pm:W^{1,p}_0(\Omega)\rightarrow\RR$ defined by
$$\varphi_\pm(u)=\frac{1}{p}||Du||^p_p+\frac{1}{2}||Du||^2_2 - \int_\Omega F_\pm(z,u)dz\ \mbox{for all}\ u\in W^{1,p}_0(\Omega).$$

\begin{prop}\label{prop6}
	If hypotheses $H(f)$ hold, then $\varphi$ satisfies the $C$-condition.
\end{prop}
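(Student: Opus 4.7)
The plan is to show that every Cerami sequence $\{u_n\}\subseteq W^{1,p}_0(\Omega)$ is bounded, and then to extract a strongly convergent subsequence via the $(S)_+$--property of $A_p$ in Proposition \ref{prop3}. Let $\{u_n\}_{n\ge 1}$ satisfy $|\varphi(u_n)|\le M_1$ and $(1+\|u_n\|)\|\varphi'(u_n)\|_{\ast}\to 0$. Since $\|u_n^{\pm}\|\le\|u_n\|$, the test functions $\mp u_n^{\mp}$ are admissible and, using that $u_n^{+}$ and $u_n^{-}$ have disjoint supports, they yield the two identities
\begin{equation*}
\|Du_n^-\|_p^p + \|Du_n^-\|_2^2 - \int_\Omega f(z,-u_n^-)\,u_n^-\,dz = o(1),
\end{equation*}
\begin{equation*}
\|Du_n^+\|_p^p + \|Du_n^+\|_2^2 - \int_\Omega f(z,u_n^+)\,u_n^+\,dz = o(1).
\end{equation*}
I would then bound $\{u_n^-\}$ and $\{u_n^+\}$ separately, since they are controlled by quite different pieces of $H(f)$.

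For $\{u_n^-\}$ I would argue by contradiction: assume $\|u_n^-\|\to\infty$ and set $y_n=u_n^-/\|u_n^-\|\ge 0$, $\|y_n\|=1$. Along a subsequence $y_n\xrightarrow{w}y\ge 0$ in $W^{1,p}_0(\Omega)$ and $y_n\to y$ in $L^p(\Omega)$. Dividing the first identity by $\|u_n^-\|^p$ kills the Laplacian term (since $p>2$ forces $\|Du_n^-\|_2^2\le c\,\|u_n^-\|^2$), while the upper asymptotic bound in $H(f)(ii)$ at $-\infty$ gives a weak $L^{p'}$--limit $f(z,-u_n^-)/\|u_n^-\|^{p-1}\xrightarrow{w}\tilde{\vartheta}(z)\,y^{p-1}$ with $\tilde{\vartheta}(z)\le\hat{\lambda}_1(p)$ a.e. If $y\equiv 0$, then $\|Dy_n\|_p^p\to 0$, contradicting $\|y_n\|=1$. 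If $y\not\equiv 0$, then $\|Dy\|_p^p\le\int_\Omega\tilde{\vartheta}\,y^p\,dz$, Lemma \ref{lemma5} forces $\tilde{\vartheta}\equiv\hat{\lambda}_1(p)$, and (\ref{eq3}) then gives $y=\xi\,\hat{u}_1(p)\in {\rm int}\,C_+$ with $\xi>0$; in particular $u_n^-(z)\to+\infty$ a.e. in $\Omega$. To close the contradiction I would use the identity
\begin{equation*}
\varphi(u_n)-\tfrac{1}{p}\langle\varphi'(u_n),u_n\rangle=\left(\tfrac{1}{2}-\tfrac{1}{p}\right)\|Du_n\|_2^2+\tfrac{1}{p}\int_\Omega[f(z,u_n)u_n-pF(z,u_n)]\,dz,
\end{equation*}
whose left--hand side is bounded. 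By $H(f)(iii)$ the integrand is bounded below on $\{u_n\ge 0\}$ by an integrable function, while on $\{u_n<0\}$ it tends pointwise a.e.\ to $+\infty$; Fatou then yields the contradiction.

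The boundedness of $\{u_n^+\}$ is the delicate step where the crossing assumption does the work, and I expect this to be the main obstacle. Assuming $\|u_n^+\|\to\infty$, set $y_n=u_n^+/\|u_n^+\|\xrightarrow{w}y\ge 0$. Since $\{u_n^-\}$ is already bounded, $u_n/\|u_n^+\|\xrightarrow{w}y$ as well, and the $(p-1)$--homogeneity of $A_p$ rewrites $A_p(u_n)/\|u_n^+\|^{p-1}=A_p(u_n/\|u_n^+\|)$. Hypothesis $H(f)(ii)$ at $+\infty$ produces, along a subsequence, $f(z,u_n^+)/(u_n^+)^{p-1}\xrightarrow{w}\tilde{\eta}$ in $L^{p'}$ on $\{y>0\}$ with $\eta(z)\le\tilde{\eta}(z)\le\hat{\eta}$. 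Testing the rescaled Cerami equation against $u_n/\|u_n^+\|-y$ and invoking the $(S)_+$--property (Proposition \ref{prop3}) forces $u_n/\|u_n^+\|\to y$ strongly in $W^{1,p}_0(\Omega)$, and passing to the limit yields $-\Delta_p y=\tilde{\eta}(z)y^{p-1}$ in $\Omega$, $y|_{\partial\Omega}=0$, with $y\ge 0$ and $\|y\|=1$. Hence $y$ is a positive principal eigenfunction of the weighted problem (\ref{eq4}) with weight $\tilde{\eta}$ and eigenvalue $1$, so $\tilde{\lambda}_1(p,\tilde{\eta})=1$. But $\tilde{\eta}\ge\eta\ge\hat{\lambda}_1(p)$ with $\eta\not\equiv\hat{\lambda}_1(p)$, so Proposition \ref{prop4} gives $\tilde{\lambda}_1(p,\tilde{\eta})<\tilde{\lambda}_1(p,\hat{\lambda}_1(p))=1$, a contradiction. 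The subcase $y\equiv 0$ is again ruled out by $\|y_n\|=1$.

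With both parts bounded, $\{u_n\}$ is bounded in $W^{1,p}_0(\Omega)$. Extract a subsequence with $u_n\xrightarrow{w}u$ in $W^{1,p}_0(\Omega)$ and $u_n\to u$ strongly in $L^r(\Omega)$ for every $r<p^*$. The growth bound in $H(f)(i)$--$(ii)$ combined with the Sobolev embedding gives $\int_\Omega f(z,u_n)(u_n-u)\,dz\to 0$, and from $\langle\varphi'(u_n),u_n-u\rangle\to 0$ one obtains $\langle A_p(u_n)+A(u_n),u_n-u\rangle\to 0$. The monotonicity of $A_p$ and $A$ forces each of the two summands to $0$ separately, and the $(S)_+$--property of $A_p$ in Proposition \ref{prop3} finally yields $u_n\to u$ strongly in $W^{1,p}_0(\Omega)$, completing the proof.
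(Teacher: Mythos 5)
Your proposal is correct and follows essentially the same route as the paper: bound $\{u_n^-\}$ first via the rescaling/weak-limit argument with Lemma \ref{lemma5} and the Fatou contradiction from $H(f)(iii)$ (your identity $\varphi(u_n)-\frac{1}{p}\langle\varphi'(u_n),u_n\rangle$ is just the paper's addition of (\ref{eq11}) and (\ref{eq12})), then bound $\{u_n^+\}$ via the $(S)_+$-property and the weighted-eigenvalue contradiction from Proposition \ref{prop4}, and finally conclude with the $(S)_+$-property again. The only cosmetic difference is that you phrase the positive-part contradiction as ``$1$ would be the principal eigenvalue of the weight $\tilde{\eta}$'' while the paper says ``the eigenfunction would have to be nodal''; these are the same fact.
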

\begin{proof}
	We consider a sequence $\{u_n\}_{n\geq1}\leq W^{1,p}_0(\Omega)$ such that
	\begin{equation}\label{eq8}
		|\varphi(u_n)|\leq M_1\ \mbox{for some}\ M_1>0,\ \mbox{all}\ n\in\NN
	\end{equation}
	\begin{equation}\label{eq9}
		(1+||u_n||)\varphi'(u_n)\rightarrow0\ \mbox{in}\ W^{-1,p'}(\Omega) = W^{1,p}_0(\Omega)^{*}\ \mbox{as}\ n\rightarrow\infty.
	\end{equation}
	
	From (\ref{eq9}) we have
	\begin{eqnarray}\label{eq10}
		&&\left|\langle A_p(u_n),h\rangle + \langle A(u_n),h\rangle - \int_\Omega f(z,u_n)hdz\right| \leq \frac{\epsilon_n||h||}{1+||u_n||}, \\
		&&\mbox{for all}\ h\in W^{1,p}_0(\Omega), \mbox{with}\ \epsilon_n\rightarrow0^+.\nonumber
	\end{eqnarray}
	
	In (\ref{eq10}) we chose $h=u_n\in W^{1,p}_0(\Omega)$. Then
	\begin{equation}\label{eq11}
		-||Du_n||^p_p - ||Du_n||^2_2 + \int_\Omega f(z,u_n)u_n dz\leq\xi_n\ \mbox{for all}\ n\in\NN.
	\end{equation}
	
	Also, from (\ref{eq8}) we have
	\begin{equation}\label{eq12}
		||Du_n||^p_p+ \frac{p}{2}||Du_n||^2_2 - \int_\Omega pF(z,u_n)dz\leq pM_1\ \mbox{for all}\ n\in\NN.
	\end{equation}
	
	We add (\ref{eq11}) and (\ref{eq12}). Recalling that $p>2$, we obtain
	\begin{eqnarray}\label{eq13}
		&& \int_\Omega[f(z,u_n)u_n - pF(z,u_n)]dz\leq M_2\ \mbox{for some}\ M_2>0, \mbox{all}\ n\in\NN, \nonumber \\
		&\Rightarrow & \int_\Omega[f(z,-u^-_n)(-u^-_n)-pF(z,-u^-_n)]dz \leq M_3\ \mbox{for some}\ M_3>0\ \mbox{all}\ n\in\NN \\
		&& \mbox{(see hypotheses $H(f)(i)(iii)$)}. \nonumber
	\end{eqnarray}
	\begin{claim}\label{claim1}
		$\{u^-_n\}_{n\geq1}\subseteq W^{1,p}_0(\Omega)$ is bounded.
	\end{claim}
	We argue by contradiction. So, suppose that Claim \ref{claim1} is not true. By passing to a subsequence if necessary, we may assume that
	\begin{equation}\label{eq14}
		||u^-_n||\rightarrow\infty\ \mbox{as}\ n\rightarrow\infty.
	\end{equation}
	
	Let $y_n=\frac{u^-_n}{||u^-_n||},\ n\in\NN$. Then $||y_n||=1,\ y_n\geq0$ for all $n\in\NN$. So, we may assume that
	\begin{equation}\label{eq15}
		y_n\xrightarrow{w}y\ \mbox{in}\ W^{1,p}_0(\Omega)\ \mbox{and}\ y_n\rightarrow y\ \mbox{in}\ L^p(\Omega),\ y\geq0.
	\end{equation}
	
	In (\ref{eq10}) we choose $h=-u^-_n\in W^{1,p}_0(\Omega)$. Then
	\begin{eqnarray}\label{eq16}
		&& ||Du^-_n||^p_p + ||Du^-_n||^2_2 - \int_\Omega f(z,-u^-_n)(-u^-_n)dz\leq\xi_n\ \mbox{for all}\ n\in\NN, \nonumber \\
		&\Rightarrow & ||Dy_n||^p_p + \frac{1}{||u^-_n||^{p-2}}||Dy_n||^2_2 - \int_\Omega\frac{N_f(-u^-_n)}{||u^-_n||^{p-1}}y_n dz \leq \frac{\xi_n}{||u^-_n||^{p-1}}\ \mbox{for all}\ n\in\NN.
	\end{eqnarray}
	
	Hypotheses $H(f)(i),(ii),(iii)$ imply that
	\begin{equation}\label{eq17}
		|f(z,x)|\leq c_1[1+|x|^{p-1}]\ \mbox{for almost all}\ z\in\Omega,\ \mbox{all}\ x\in\RR,\ \mbox{for some}\ c_1>0.
	\end{equation}
	
	From (\ref{eq17}) it follows that
	\begin{equation}\label{eq18}
		\left\{\frac{N_f(-u^-_n)}{||u^-_n||^{p-1}}\right\}_{n\geq1} \subseteq L^{p'}(\Omega)\ \mbox{is bounded}\ \left(\frac{1}{p}+\frac{1}{p'}=1\right).
	\end{equation}
	
	On account of (\ref{eq18}) and hypothesis $H(f)(ii)$, at least for a subsequence we have
	\begin{equation}\label{eq19}
		\frac{N_f(-u^-_n)}{||u^-_n||^{p-1}}\xrightarrow{w}\vartheta(z)y^{p-1}\ \mbox{in}\ L^{p'}(\Omega)\ \mbox{with}\ -\hat{\vartheta}\leq\vartheta(z)\leq\hat{\lambda}_1(p)\ \mbox{for almost all}\ z\in\Omega
	\end{equation}
	(see Aizicovici, Papageorgiou \& Staicu \cite{1}, proof of Proposition 16). We pass to the limit as $n\rightarrow\infty$ in (\ref{eq16}). Using (\ref{eq14}), (\ref{eq15}), (\ref{eq19}) and the fact that $2<p$ we obtain
	\begin{equation}\label{eq20}
		||Dy||^p_p\leq\int_\Omega\vartheta(z)y^pdz.
	\end{equation}
	
	If $\vartheta\not\equiv\hat{\lambda}_1(p)$, then from (\ref{eq14}) and Lemma \ref{lemma5} we have
	\begin{eqnarray}\label{eq21}
		& \hat{c}||y||^p\leq0, \nonumber \\
		\Rightarrow & y=0.
	\end{eqnarray}
	
	Then from (\ref{eq16}) and using as before (\ref{eq14}), (\ref{eq15}), (\ref{eq19}) (the last two relations with $y=0$, see (\ref{eq21})) and the fact that $p>2$, we infer that
	\begin{eqnarray*}
		&& ||Dy_n||_p\rightarrow0, \\
		&\Rightarrow & y_n\rightarrow0\ \mbox{in}\ W^{1,p}_0(\Omega),
	\end{eqnarray*}
	which contradicts the fact that $||y_n||=1$ for all $n\in\NN$.
	
	Next we assume that $\vartheta(z)=\hat{\lambda}_1(p)$ for almost all $z\in\Omega$ (resonant case). Then from (\ref{eq20}) and (\ref{eq3}) we have
	\begin{eqnarray*}
		&& ||Dy||^p_p=\hat{\lambda}_1(p)||y||^p_p, \\
		&\Rightarrow & y=\tilde{\vartheta}\hat{u}_1(p)\ \mbox{with}\ \tilde{\vartheta}\geq0\ \mbox{(recall that $y\geq0$, see (\ref{eq15}))}.
	\end{eqnarray*}
	
	If $\tilde{\vartheta}=0$, then $y=0$ and as above we have
	$$y_n\rightarrow0\ \mbox{in}\ W^{1,p}_0(\Omega),$$
	again contradicting the fact that $||y_n||=1$ for all $n\in\NN$.
	
	If $\tilde{\vartheta}>0$, then $y(z)>0$ for all $z\in\Omega$ and so
	\begin{eqnarray}\label{eq22}
		&& u^-_n(z)\rightarrow+\infty\ \mbox{for all}\ z\in\Omega, \nonumber \\
		&\Rightarrow & f_n(z,-u^-_n(z))(-u^-_n(z)) - pF(z,-u^-_n(z))\rightarrow0\ \mbox{for almost all}\ z\in\Omega,\ \mbox{as}\ n\rightarrow\infty \nonumber \\
		&& \mbox{(see hypothesis $H(f)(iii)$)} \nonumber \\
		&\Rightarrow & \int_\Omega\left[f(z,-u^-_n)(-u^-_n) - pF(z,-u^-_n)\right]dz\rightarrow+\infty\quad  \mbox{(by Fatou's lemma)}. 	\end{eqnarray}
	
	We compare (\ref{eq22}) and (\ref{eq13}) and have a contradiction.
	
	This proves Claim \ref{claim1}.
	\begin{claim}\label{claim2}
		$\{u^+_n\}_{n\geq1}\subseteq W^{1,p}_0(\Omega)$ is bounded.
	\end{claim}
	Again we argue indirectly. So, suppose that Claim \ref{claim2} is not true. Then at least for a subsequence we have
	\begin{equation}\label{eq23}
		||u^+_n||\rightarrow+\infty\ \mbox{as}\ n\rightarrow\infty.
	\end{equation}
	
	Let $v_n=\frac{u^+_n}{||u^+_n||},\ n\in\NN$. Then $||v_n||=1,\ v_n\geq0$ for all $n\in\NN$ and so we may assume that
	\begin{equation}\label{eq24}
		v_n\xrightarrow{w}v\ \mbox{in}\ W^{1,p}_0(\Omega)\ \mbox{and}\ v_n\rightarrow v\ \mbox{in}\ L^p(\Omega),\ v\geq0.
	\end{equation}
	
	From (\ref{eq10}) and Claim \ref{claim1}, we have
	\begin{eqnarray}\label{eq25}
		&& |\langle A_p(u^+_n),h\rangle + \langle A(u^+_n),h\rangle - \int_\Omega f(z,u^+_n)hdz|\leq M_4 \nonumber \\
		&& \mbox{for some $M_4>0$, all $n\in\NN$, all $h\in W^{1,p}_0(\Omega)$ (see hypothesis $H(f)(i)$)} \nonumber \\
		&\Rightarrow & \left|\langle A_p(v_n),h\rangle + \frac{1}{||u^+_n||^{p-2}}, \langle A(v_n),h\rangle - \int_\Omega\frac{N_f(u^+_n)}{||u^+_n||^{p-1}}hdz\right| \leq \frac{M_3||h||}{||u^+_n||^{p-1}}\ \mbox{for all}\ n\in\NN.
	\end{eqnarray}
	
	Using the growth condition from (\ref{eq17}), we see that
	\begin{equation}\label{eq26}
		\left\{\frac{N_f(u^+_n)}{||u^+_n||^{p-1}}\right\}_{n\geq1} \subseteq L^{p'}(\Omega)\ \mbox{is bounded}.
	\end{equation}
	
	In (\ref{eq25}) we choose $h=v_n-v\in W^{1,p}_0(\Omega)$, pass to the limit as $n\rightarrow\infty$ and use (\ref{eq23}), (\ref{eq24}), (\ref{eq20}) and the fact that $p>2$. Then
	\begin{eqnarray}\label{eq27}
		&& \lim_{n\rightarrow\infty}\langle A_p(v_n),v_n-v\rangle=0, \nonumber \\
		&\Rightarrow & v_n\rightarrow v\ \mbox{in}\ W^{1,p}_0(\Omega)\ \mbox{(see Proposition \ref{prop3})}.
	\end{eqnarray}
	
	From (\ref{eq26}) and hypothesis $H(f)(ii)$, we see that at least for a subsequence we have
	\begin{equation}\label{eq28}
		\frac{N_f(u^+_n)}{||u^+_n||^{p-1}}\xrightarrow{w}\tilde{\eta}(z)v^{p-1}\ \mbox{in}\ L^{p'}(\Omega)\ \mbox{with}\ \eta(z)\leq\hat{\eta}(z)\leq\hat{\eta}\ \mbox{for almost all}\ z\in\Omega\ \mbox{(see \cite{1})}.
	\end{equation}
	
	So, if in (\ref{eq25}) we pass to the limit as $n\rightarrow\infty$ and use (\ref{eq23}), (\ref{eq27}), (\ref{eq28}) and the fact that $p>2$, then
	\begin{eqnarray}\label{eq29}
		& \langle A_p(v),h\rangle = \int_\Omega\tilde{\eta}(z)v^{p-1}hdz\ \mbox{for all}\ h\in W^{1,p}_0(\Omega), \nonumber \\
		\Rightarrow & -\Delta_p v(z) = \tilde{\eta}(z) v(z)^{p-1}\ \mbox{for almost all}\ z\in\Omega,\ v|_{\partial\Omega}=0.
	\end{eqnarray}
	
	From Proposition \ref{prop4}, we have
	\begin{equation}\label{eq30}
		\tilde{\lambda}_1(p,\tilde{\eta}) < \tilde{\lambda}_1(p,\hat{\lambda}_1)=1.
	\end{equation}
	
	From (\ref{eq29}) and (\ref{eq30}) and since $||v||=1$ (see (\ref{eq27})), it follows that $v(\cdot)$ must be  nodal, contradicting (\ref{eq24}).
	This proves Claim \ref{claim2}.
	
	From claims \ref{claim1} and  \ref{claim2}, we deduce that
	$$\{u_n\}_{n\geq1}\subseteq W^{1,p}_0(\Omega)\ \mbox{is bounded}.$$
	
	So, we may assume that
	\begin{equation}\label{eq31}
		u_n\xrightarrow{w}u\ \mbox{in}\ W^{1,p}_0(\Omega)\ \mbox{and}\ u_n\rightarrow u\ \mbox{in}\ L^p(\Omega).
	\end{equation}
	
	In (\ref{eq10}) we choose $h=u_n-u\in W^{1,p}_0(\Omega)$, pass to the limit as $n\rightarrow\infty$ and use (\ref{eq31}) and the fact that $\{N	_f(u_n)\}_{n\geq1}\subseteq L^{p'}(\Omega)$ is bounded (see (\ref{eq17})). Then
	\begin{align*}
		& \lim_{n\rightarrow\infty}\left[\langle A_p(u_n), u_n-u\rangle + \langle A(u_n),u_n-u\rangle\right]=0, \\
		\Rightarrow & \limsup_{n\rightarrow\infty}\left[\langle A_p(u_n),u_n-u\rangle + \langle A(u), u_n-u\rangle\right]\leq0\ \mbox{(since $A(\cdot)$ is monotone)}, \\
		\Rightarrow & u_n\rightarrow u\ \mbox{in}\ W^{1,p}_0(\Omega)\ \mbox{(see Proposition \ref{prop3})}.
	\end{align*}
	
	Therefore the energy functional $\varphi$ satisfies the $C$-condition.
\end{proof}

Next, we show that $\varphi_+$ satisfies the $C$-condition, too.
\begin{prop}\label{prop7}
	If hypotheses $H(f)$ hold, then $\varphi_+$ satisfies the $C$-condition.
\end{prop}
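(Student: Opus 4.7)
My plan is to mirror the proof of Proposition \ref{prop6} while exploiting one structural simplification afforded by the positive truncation: since $f_+(z,x)\equiv 0$ for $x\le 0$, the counterpart of Claim \ref{claim1} (bounding the negative part of a $C$-sequence) becomes trivial and needs none of hypothesis $H(f)(iii)$, whereas the bound on the positive part is obtained by reproducing Claim \ref{claim2} of Proposition \ref{prop6} essentially verbatim.

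Let $\{u_n\}_{n\ge 1}\subseteq W^{1,p}_0(\Omega)$ satisfy $|\varphi_+(u_n)|\le M_1$ and $(1+\|u_n\|)\varphi_+'(u_n)\to 0$ in $W^{-1,p'}(\Omega)$. First, I would test the derivative condition against $h=-u_n^-\in W^{1,p}_0(\Omega)$. Since $u_n^-$ is supported on $\{u_n<0\}$ and $f_+(z,u_n)=f(z,u_n^+)=f(z,0)=0$ on that set, the reaction integral vanishes identically, and one is left with
\[
\|Du_n^-\|_p^p+\|Du_n^-\|_2^2 \le \varepsilon_n \qquad (\varepsilon_n\to 0^+),
\]
from which $u_n^-\to 0$ in $W^{1,p}_0(\Omega)$ immediately. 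No resonance argument at $-\infty$ is required here.

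Next, to bound $\{u_n^+\}$, I would argue by contradiction as in Claim \ref{claim2} of Proposition \ref{prop6}. Assume $\|u_n\|\to\infty$ and set $v_n=u_n/\|u_n\|$; the previous step gives $v_n^-\to 0$ strongly, so any weak limit $v$ of a subsequence of $\{v_n\}$ is nonnegative. Dividing the derivative condition by $\|u_n\|^{p-1}$, the Laplacian contribution is annihilated by the factor $\|u_n\|^{-(p-2)}\to 0$ (this is where $p>2$ is essential), and the growth bound $|f(z,x)|\le c_1(1+|x|^{p-1})$ together with hypothesis $H(f)(ii)$ gives, as in \cite{1},
\[
\frac{f(z,u_n^+)}{\|u_n\|^{p-1}} \xrightarrow{w} \tilde\eta(z)\,v^{p-1} \quad \text{in } L^{p'}(\Omega),
\]
for some $\tilde\eta$ with $\eta(z)\le\tilde\eta(z)\le\hat\eta$ a.e. Testing against $v_n-v$ and invoking the $(S)_+$ property of $A_p$ (Proposition \ref{prop3}) forces $v_n\to v$ strongly, so $\|v\|=1$ and $v$ solves $-\Delta_p v=\tilde\eta(z)v^{p-1}$ in $\Omega$, $v|_{\partial\Omega}=0$, $v\ge 0$, $v\not\equiv 0$. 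But Proposition \ref{prop4} applied with $\tilde\eta\ge\eta\ge\hat\lambda_1(p)$ and $\eta\not\equiv\hat\lambda_1(p)$ gives $\tilde\lambda_1(p,\tilde\eta)<\tilde\lambda_1(p,\hat\lambda_1(p))=1$, so the eigenvalue $1$ (with weight $\tilde\eta$) is non-principal and $v$ must change sign --- contradicting $v\ge 0$, $v\ne 0$.

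With $\{u_n\}$ now known to be bounded, I would extract $u_n\rightharpoonup u$ in $W^{1,p}_0(\Omega)$ with $u_n\to u$ in $L^p(\Omega)$; testing the derivative condition against $u_n-u$, monotonicity of $A$ together with Proposition \ref{prop3} for $A_p$ delivers strong convergence $u_n\to u$, verifying the $C$-condition. The only step where genuine work is required is the positive-part analysis, which is essentially identical to Claim \ref{claim2} of Proposition \ref{prop6}; the other portions are either trivialized by the truncation or are standard $(S)_+$ machinery, making Proposition \ref{prop7} strictly easier than Proposition \ref{prop6}.
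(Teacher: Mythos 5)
Your proposal is correct and follows essentially the same route as the paper: test against $-u_n^-$ to kill the negative part (the truncation makes the reaction integral vanish, so no use of $H(f)(iii)$ is needed), then repeat the Claim 2 argument of Proposition \ref{prop6} verbatim for the positive part, and finish with the standard $(S)_+$ step. The only cosmetic difference is that you normalize $u_n/\|u_n\|$ instead of $u_n^+/\|u_n^+\|$, which is immaterial since $u_n^-\to 0$.
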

\begin{proof}
	We consider a sequence $\{u_n\}_{n\geq1}\subseteq W^{1,p}_0(\Omega)$ such that
	\begin{eqnarray}
		|\varphi_+(u_n)|\leq M_5\ \mbox{for some}\ M_5>0,\ \mbox{for all}\ n\in\NN, \label{eq32} \\
		(1+||u_n||)\varphi'_+(u_n)\rightarrow0\ \mbox{in}\ W^{-1,p'}(\Omega)\ \mbox{as}\ n\rightarrow\infty . \label{eq33}
	\end{eqnarray}
	
	From (\ref{eq33}) we have
	\begin{eqnarray}\label{eq34}
		&&|\langle A_p(u_n),h\rangle + \langle A(u_n),h\rangle - \int_\Omega f_+(z,u_n)hdz| \leq \frac{\epsilon_n||h||}{1+||u_n||} \\
		&&\mbox{for all}\ h\in W^{1,p}_0(\Omega)\ \mbox{with}\ \epsilon_n\rightarrow0^+.\nonumber
	\end{eqnarray}
	
	In (\ref{eq34}) we choose $h=-u^-_n\in W^{1,p}_0(\Omega)$. Then
	\begin{eqnarray}\label{eq35}
		& ||Du^-_n||^p_p + ||Du^-_n||^2_2\leq\epsilon_n\ \mbox{for all}\ n\in\NN, \nonumber \\
		\Rightarrow & u^-_n\rightarrow0\ \mbox{in}\ W^{1,p}_0(\Omega)\ \mbox{as}\ n\rightarrow\infty.
	\end{eqnarray}
	
	From (\ref{eq34}) and (\ref{eq35}) it follows that
	\begin{equation}\label{eq36}
		|\langle A_p(u^+_n),h\rangle +\langle A(u^+_n),h\rangle - \int_\Omega f(z,u^+_n)hdz| \leq \epsilon'_n||h||\ \mbox{for all}\ h\in W^{1,p}_0(\Omega)\ \mbox{with}\ \epsilon'_n\rightarrow0.
	\end{equation}
	
	Suppose that $\{u^+_n\}_{n\geq1}\subseteq W^{1,p}_0(\Omega)$ is unbounded. So, we may assume that $||u^+_n||\rightarrow\infty$. We set $v_n=\frac{u^+_n}{||u^+_n||},\ n\in\NN$ and have $||v_n||=1,\ v_n\geq0$ for all $n\in\NN$. Hence we can say (at least for a subsequence) that
	$$v_n\xrightarrow{w}v\ \mbox{in}\ W^{1,p}_0(\Omega)\ \mbox{and}\ v_n\rightarrow v\ \mbox{in}\ L^p(\Omega).$$
	
	Then reasoning as in the proof of Proposition \ref{prop6} (see the part of the proof after (\ref{eq24})), we show that
	\begin{equation}\label{eq37}
		\{u^+_n\}_{n\geq1}\subseteq W^{1,p}_0(\Omega)\ \mbox{is bounded}.
	\end{equation}
	
	From (\ref{eq35}) and (\ref{eq37}) it follows that
	$$\{u_n\}_{n\geq1}\subseteq W^{1,p}_0(\Omega)\ \mbox{is bounded}.$$
	
	So, we may assume that
	\begin{equation}\label{eq38}
		u_n\xrightarrow{w}u\ \mbox{in}\ W^{1,p}_0(\Omega)\ \mbox{and}\ u_n\rightarrow u\ \mbox{in}\ L^p(\Omega).
	\end{equation}
	
	In (\ref{eq34}) we choose $h=u_n-u\in W^{1,p}_0(\Omega)$. Passing the limit as $n\rightarrow\infty$, using (\ref{eq38}) and following the argument in the last part of the proof of Theorem \ref{th1} (see the part of the proof after (\ref{eq31})), we obtain
	$u_n\rightarrow u$ in $W^{1,p}_0(\Omega)$.  We conclude that
		$\varphi_+$ satisfies the $C$-condition.
\end{proof}

For the functional $\varphi_-$, we have the following result.
\begin{prop}\label{prop8}
	If hypotheses $H(f)$ hold, then $\varphi_-$ is coercive.
\end{prop}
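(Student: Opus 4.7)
My plan is to argue by contradiction using a normalized blow-up sequence, in the spirit of Proposition~\ref{prop6} but driven by the energy bound alone. Since $f_-(z,x)=f(z,-x^-)$ vanishes on $[0,+\infty)$ and agrees with $f(z,x)$ on $(-\infty,0)$, one has $F_-(z,u)=F(z,-u^-)$, whence
\[
\varphi_-(u)=\frac{1}{p}\|Du\|_p^p+\frac{1}{2}\|Du\|_2^2-\int_\Omega F(z,-u^-)\,dz.
\]
Suppose for contradiction that there is $\{u_n\}\subseteq W^{1,p}_0(\Omega)$ with $\|u_n\|\to\infty$ and $\varphi_-(u_n)\leq M$ for some $M>0$. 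The $(p-1)$-growth (\ref{eq17}) combined with (\ref{eq3}) gives $|\int_\Omega F(z,-u_n^-)\,dz|\leq c_0+c_1\|u_n^-\|^p$, which forces $\|u_n^+\|^p\leq C(1+\|u_n^-\|^p)$; so, passing to a subsequence, $\|u_n^-\|\to\infty$. Set $w_n=u_n^-$ and $y_n=w_n/\|w_n\|$; then $\|y_n\|=1$, $y_n\geq 0$, and up to a further subsequence $y_n\rightharpoonup y$ in $W^{1,p}_0(\Omega)$, $y_n\to y$ in $L^p(\Omega)$ with $y\geq 0$.

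Next I would pin down $y$. Dropping all nonnegative contributions from $\varphi_-(u_n)\leq M$ yields
\[
\|Dw_n\|_p^p-\int_\Omega pF(z,-w_n)\,dz\leq pM. \quad (\dagger)
\]
Dividing by $\|w_n\|^p$, passing to the limit, and using the sharp upper bound $pF(z,x)\leq(\hat{\lambda}_1(p)+\varepsilon)|x|^p+C_\varepsilon$ for $x\leq 0$ (consequence of the upper asymptotic in $H(f)(ii)$), together with $y_n\to y$ in $L^p(\Omega)$, produces $1\leq\hat{\lambda}_1(p)\|y\|_p^p$ after letting $\varepsilon\to 0^+$. Combined with (\ref{eq3}) and weak lower semicontinuity ($\hat{\lambda}_1(p)\|y\|_p^p\leq\|Dy\|_p^p\leq\liminf_n\|Dy_n\|_p^p=1$), this forces $\|Dy\|_p^p=\hat{\lambda}_1(p)\|y\|_p^p=1$; by the simplicity part of (\ref{eq3}) and $y\geq 0$, $y=c\hat{u}_1(p)$ with $c>0$, so $y\in\mathrm{int}\,C_+$ and $y(z)>0$ for every $z\in\Omega$.

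To conclude, I would apply Fatou's lemma. Since $y>0$ on $\Omega$, $w_n(z)=\|w_n\|y_n(z)\to+\infty$ for almost all $z$. Together with the remark after $H(f)$ (a consequence of $H(f)(iii)$), $\hat{\lambda}_1(p)|x|^p-pF(z,x)\to+\infty$ uniformly as $x\to-\infty$, and the uniform lower bound $\hat{\lambda}_1(p)|x|^p-pF(z,x)\geq-C'$ for $x\leq 0$ supplied by $H(f)(ii)$, Fatou's lemma yields $\int_\Omega[\hat{\lambda}_1(p)w_n^p-pF(z,-w_n)]\,dz\to+\infty$. On the other hand (\ref{eq3}) applied to $w_n$ combined with $(\dagger)$ gives
\[
\int_\Omega\bigl[\hat{\lambda}_1(p)w_n^p-pF(z,-w_n)\bigr]\,dz\leq\|Dw_n\|_p^p-\int_\Omega pF(z,-w_n)\,dz\leq pM,
\]
the required contradiction; hence $\varphi_-$ is coercive.

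The main obstacle is the identification of $y$: unlike in Propositions~\ref{prop6} and~\ref{prop7}, no information on $\varphi_-'(u_n)$ is at our disposal, so $y=c\hat{u}_1(p)$ must be extracted from the energy inequality alone. The resolution is to squeeze $\|y\|_p^p$ between the lower bound $1/\hat{\lambda}_1(p)$ furnished by $(\dagger)$ and the matching upper bound coming from (\ref{eq3}) with weak lower semicontinuity, a step that crucially relies on the sharp upper asymptotic on $F$ supplied by $H(f)(ii)$.
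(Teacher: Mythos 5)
Your argument is correct in substance and reaches the same final contradiction as the paper (Fatou's lemma applied to $\hat{\lambda}_1(p)(u_n^-)^p-pF(z,-u_n^-)$ together with $\hat{\lambda}_1(p)\|u_n^-\|_p^p\leq\|Du_n^-\|_p^p$), but the way you identify the weak limit $y$ is genuinely different and, I think, cleaner. The paper normalizes the full sequence $u_n$, uses uniform integrability and the Dunford--Pettis theorem to write $F_-(\cdot,u_n)/\|u_n\|^p\xrightarrow{w}\frac{1}{p}\vartheta(z)(y^-)^p$ with $-\hat{\vartheta}\leq\vartheta\leq\hat{\lambda}_1(p)$, and then runs a case analysis: if $\vartheta\not\equiv\hat{\lambda}_1(p)$, Lemma~\ref{lemma5} forces $y=0$ and one must return to the energy inequality to get $y_n\rightarrow0$ strongly, contradicting $\|y_n\|=1$; if $\vartheta\equiv\hat{\lambda}_1(p)$, a further dichotomy on $\tilde{\vartheta}$ leads to the Fatou step. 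Your two-sided squeeze $1\leq\hat{\lambda}_1(p)\|y\|_p^p\leq\|Dy\|_p^p\leq\liminf_n\|Dy_n\|_p^p=1$ bypasses Dunford--Pettis, Lemma~\ref{lemma5}, and the entire case distinction: the lower bound already rules out $y=0$, and equality in \eqref{eq3} plus $y\geq0$ pins $y$ down as a positive multiple of $\hat{u}_1(p)$, landing directly in the only scenario that requires work. What the paper's route buys in exchange is that the same Dunford--Pettis machinery is reused verbatim in Propositions~\ref{prop6} and~\ref{prop7}; what yours buys is brevity and the elimination of the auxiliary strong-convergence step.

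One point needs attention. You invoke $\hat{\lambda}_1(p)|x|^p-pF(z,x)\rightarrow+\infty$ as $x\rightarrow-\infty$ by citing the remark after $H(f)$, but that remark explicitly defers its justification to the proof of Proposition~\ref{prop8} itself, so you cannot take it as given here: you must derive it, as the paper does in \eqref{eq39}--\eqref{eq42}, by integrating the sign information on $\frac{d}{dx}\bigl[F(z,x)/|x|^p\bigr]=\frac{f(z,x)x-pF(z,x)}{|x|^p x}$ from $H(f)(iii)$ against the asymptotic bound \eqref{eq41} coming from $H(f)(ii)$. Relatedly, the uniform lower bound $\hat{\lambda}_1(p)|x|^p-pF(z,x)\geq-C'$ for $x\leq0$ that legitimizes Fatou does not follow from $H(f)(ii)$ alone (that only gives $\geq-\epsilon|x|^p-C_\epsilon$); it follows from the divergence relation just mentioned on the tail $x\leq-M_6$ together with $H(f)(i)$ on the compact part $[-M_6,0]$. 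Both fixes are routine, and with them your proof is complete.
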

\begin{proof}
	Hypothesis $H(f)(iii)$ implies that given any $\xi>0$, we can find $M_6=M_6(\xi)>0$ such that
	\begin{equation}\label{eq39}
		f(z,x)x - pF(z,x)\geq\xi\ \mbox{for almost all}\ z\in\Omega,\ \mbox{for all}\ x\leq-M_6.
	\end{equation}
	
	We have
	\begin{align}\label{eq40}
		\frac{d}{dx}\left[\frac{F(z,x)}{|x|^p}\right] & = \frac{f(z,x)|x|^p - p|x|^{p-2}xF(z,x)}{|x|^{2p}} \nonumber \\
		& = \frac{f(z,x)x - pF(z,x)}{|x|^px} \nonumber \\
		& \leq \frac{\xi}{|x|^px}\ \mbox{for almost all}\ z\in\Omega,\ \mbox{for all}\ x\leq M_6\ \mbox{(see (\ref{eq39}))}, \nonumber \\
		\Rightarrow \frac{F(z,y)}{|y|^p} - \frac{F(z,w)}{|w|^p} & \geq\frac{\xi}{p}\left[\frac{1}{|w|^p} - \frac{1}{|y|^p}\right]\ \mbox{for almost all}\ z\in\Omega, \mbox{all}\ y\leq w\leq-M_6.
	\end{align}
	
	Hypothesis $H(f)(iii)$ implies that
	\begin{equation}\label{eq41}
		-\hat{\vartheta}\leq\liminf_{x\rightarrow-\infty}\frac{pF(z,x)}{|x|^p} \leq \limsup_{x\rightarrow-\infty}\frac{pF(z,x)}{|x|^p} \leq \hat{\lambda}_1(p)\ \mbox{uniformly for almost all}\ z\in\Omega.
	\end{equation}
	
	If in (\ref{eq40}) we pass to the limit as $y\rightarrow-\infty$ and use (\ref{eq41}), then
	$$\hat{\lambda}_1(p)|w|^p - pF(z,w)\geq \xi\ \mbox{for almost all}\ z\in\Omega,\ \mbox{for all}\ w\leq -M_6.$$
	
	But $\xi>0$ is arbitrary. So, we infer that
	\begin{equation}\label{eq42}
		\hat{\lambda}_1(p)|w|^p - pF(z,w)\rightarrow+\infty\ \mbox{as}\ w\rightarrow-\infty, \mbox{uniformly for almost all}\ z\in\Omega.
	\end{equation}
	
	We will use (\ref{eq42}) to show that $\varphi_-$ is coercive. We argue by contradiction. So, suppose that $\varphi_-$ is not coercive. Then we can find $\{u_n\}_{n\geq1}\subseteq W^{1,p}_0(\Omega)$ and $M_7>0$ such that
	\begin{equation}\label{eq43}
		||u_n||\rightarrow\infty\ \mbox{and}\ \varphi_-(u_n)\leq M_7\ \mbox{for all}\ n\in\NN.
	\end{equation}
	
	Let $y_n=\frac{u_n}{||u_n||},\ n\in\NN$. Then $||y_n||=1$ for all $n\in\NN$ and so we may assume that
	\begin{equation}\label{eq44}
		y_n\xrightarrow{w}y\ \mbox{in}\ W^{1,p}_0(\Omega)\ \mbox{and}\ y_n\rightarrow y\ \mbox{in}\ L^p(\Omega).
	\end{equation}
	
	From (\ref{eq43}) we have
	\begin{equation}\label{eq45}
		||Dy_n||^p_p + \frac{p}{2||u_n||^{p-2}}||Dy_n||^2_2 - \int_\Omega\frac{pF_-(z,u_n)}{||u_n||^p}dz \leq \frac{M_7}{||u_n||^p}\ \mbox{for all}\ n\in\NN.
	\end{equation}
	
	From (\ref{eq17}) we have
	\begin{eqnarray*}
		|F(z,x)|\leq c_3[1+|x|^p]\ \mbox{for almost all}\ z\in\Omega, \mbox{all}\ x\in\RR, \mbox{some}\ c_3>0, \\
		\Rightarrow\left\{\frac{F_-(\cdot,u_n(\cdot))}{||u_n||^p}\right\}_{n\geq1}\subseteq L^1(\Omega)\ \mbox{is uniformly integrable}.
	\end{eqnarray*}
	
	By the Dunford-Pettis theorem and (\ref{eq41}) we have (at least for a subsequence) that
	\begin{equation}\label{eq46}
		\frac{F_-(\cdot,u_n(\cdot))}{||u_n||^p}\xrightarrow{w}\frac{1}{p}\vartheta(z)(y^-)^p\ \mbox{in}\ L^1(\Omega)\ \mbox{with}\ -\hat{\vartheta}\leq\vartheta(z)\leq\hat{\lambda}_1(p)\ \mbox{for almost all}\ z\in\Omega.
	\end{equation}
	
	We return to (\ref{eq45}), pass to the limit as $n\rightarrow\infty$ and use (\ref{eq43}), (\ref{eq44}), (\ref{eq46}) together with the fact that $p>2$. Then
	\begin{eqnarray}
		||Dy||^p_p\leq\int_{\Omega}\vartheta(z)(y^-)^pdz, \label{eq47} \\
		\Rightarrow ||Dy^-||^p_p\leq\int_{\Omega}\vartheta(z)(y^-)^pdz. \label{eq48}
	\end{eqnarray}
	
	If $\vartheta\not\equiv\hat{\lambda}_1(p)$, then from (\ref{eq48}) and Lemma \ref{lemma5}, we have
	\begin{align*}
		& \hat{c}||y^-||^p\leq0, \\
		\Rightarrow & y\geq0.
	\end{align*}
	
	Then (\ref{eq47}) implies that $y=0$. So, from (\ref{eq45}) it follows that
	$$y_n\rightarrow0\ \mbox{in}\ W^{1,p}_0(\Omega),$$
	which contradicts the fact that $||y_n||=1$ for all $n\in\NN$.
	
	Next, assume that $\vartheta(z)=\hat{\lambda}_1(p)$ for almost all $z\in\Omega$. From (\ref{eq48}) and (\ref{eq3}), we have
	\begin{align*}
		& ||Dy_-||^p_p = \hat{\lambda}_1(p)||y^-||^p_p, \\
		\Rightarrow & y^-=\tilde{\vartheta}\hat{u}_1(p)\ \mbox{with}\ \tilde{\vartheta}\geq0.
	\end{align*}
	
	If $\tilde{\vartheta}=0$, then $y\geq0$ and as above we reach a contradiction.
	
	If $\tilde{\vartheta}>0$, then $y(z)<0$ for all $z\in\Omega$ and so
	\begin{eqnarray*}
		&& u_n(z)\rightarrow-\infty\ \mbox{for all}\ z\in\Omega\ \mbox{as}\ n\rightarrow\infty, \\
		&\Rightarrow & u^-_n(z)\rightarrow+\infty\ \mbox{for all}\ z\in\Omega\ \mbox{as}\ n\rightarrow\infty, \\
		&\Rightarrow & \hat{\lambda}_1(p)u^-_n(z)^p - pF(z,-u^-_n(z))\rightarrow+\infty\ \mbox{for almost all}\ z\in\Omega\ \mbox{(see (\ref{eq42}))} \\
		&\Rightarrow & \int_\Omega[\hat{\lambda}_1(p)(u^-_n)^p - pF(z,-u^-_n)]dz\rightarrow+\infty\ \mbox{(by Fatou's lemma).}
	\end{eqnarray*}
	
	Since $\hat{\lambda}_1(p)||u^-_n||^p_p\leq||Du^-_n||^p_p$ for all $n\in\NN$ (see (\ref{eq3})), it follows that
	$$p\varphi_-(u_n)\rightarrow+\infty\ \mbox{as}\ n\rightarrow\infty,$$
	which contradicts (\ref{eq43}). Therefore $\varphi_-$ is coercive.
\end{proof}
\begin{remark}
	From (\ref{eq42}) we see that the resonance with respect to $\hat{\lambda}_1(p)>0$ at $-\infty$, is from the left of the principal eigenvalue.
\end{remark}
From the above proposition we infer the following fact about the functional $\varphi_-$ (see \cite{19}).
\begin{corollary}\label{corollary9}
	If hypotheses $H(f)$ hold, the $\varphi_-$ satisfies the $C$-condition.
\end{corollary}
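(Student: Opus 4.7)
The plan is to reduce the Cerami condition for $\varphi_-$ to two ingredients already at our disposal: coercivity (Proposition \ref{prop8}) and the $(S)_+$-property of $A_p$ (Proposition \ref{prop3}). Coercivity will immediately take care of boundedness, and the $(S)_+$-property will upgrade weak convergence to strong convergence.

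First I would take a Cerami sequence $\{u_n\}_{n\geq1}\subseteq W^{1,p}_0(\Omega)$ for $\varphi_-$, so $|\varphi_-(u_n)|\leq M$ and $(1+\|u_n\|)\varphi'_-(u_n)\to 0$ in $W^{-1,p'}(\Omega)$. Since Proposition \ref{prop8} gives $\varphi_-(u)\to+\infty$ as $\|u\|\to\infty$, the bound $|\varphi_-(u_n)|\leq M$ forces $\{u_n\}$ to be bounded in $W^{1,p}_0(\Omega)$. Reflexivity and the compact Sobolev embedding $W^{1,p}_0(\Omega)\hookrightarrow L^p(\Omega)$ then yield, along a subsequence,
$$u_n\xrightarrow{w}u\ \mbox{in}\ W^{1,p}_0(\Omega),\qquad u_n\rightarrow u\ \mbox{in}\ L^p(\Omega).$$

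Next I would test the asymptotic equation
$$\left|\langle A_p(u_n),h\rangle + \langle A(u_n),h\rangle - \int_\Omega f_-(z,u_n)h\,dz\right|\leq \frac{\epsilon_n\|h\|}{1+\|u_n\|}$$
with $h=u_n-u\in W^{1,p}_0(\Omega)$. The growth bound (\ref{eq17}) applied to $f_-$ makes $\{N_{f_-}(u_n)\}_{n\geq1}\subseteq L^{p'}(\Omega)$ bounded, so $\int_\Omega f_-(z,u_n)(u_n-u)\,dz\to 0$ by the strong $L^p$-convergence. Since the linear operator $A$ is monotone, $\langle A(u_n),u_n-u\rangle\geq \langle A(u),u_n-u\rangle\to 0$ by weak convergence. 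Combining these gives
$$\limsup_{n\rightarrow\infty}\langle A_p(u_n),u_n-u\rangle\leq 0,$$
and Proposition \ref{prop3} then delivers $u_n\rightarrow u$ in $W^{1,p}_0(\Omega)$, proving the $C$-condition.

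The argument is essentially routine once coercivity is in hand; the only mildly delicate point is isolating the $A_p$-term from the $A$-term (which lives on the larger space $H^1_0(\Omega)$), and that is handled in one line by monotonicity of $A$. I do not foresee a genuine obstacle here.
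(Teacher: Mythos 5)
Your proposal is correct and follows exactly the route the paper intends: the paper's ``proof'' of Corollary \ref{corollary9} is a one-line citation to \cite{19} for the standard fact that a coercive functional of this type satisfies the $C$-condition, and your argument (coercivity $\Rightarrow$ bounded Cerami sequences, then the $(S)_+$-property of $A_p$ combined with monotonicity of $A$ and the $L^{p'}$-boundedness of the Nemytskii term) is precisely the standard proof of that fact, matching the extraction argument already carried out after (\ref{eq31}) in the proof of Proposition \ref{prop6}.
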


Next, we determine the nature of the critical point $u=0$ for $\varphi_+$.
\begin{prop}\label{prop10}
	If hypotheses $H(f)$ hold, then $u=0$ is a local minimizer for $\varphi_+$.
\end{prop}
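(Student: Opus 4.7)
The natural strategy is to first establish that $u=0$ is a local minimizer of $\varphi_+$ in the $C^1_0(\overline{\Omega})$-topology and then invoke Proposition~\ref{prop2} to upgrade this to a $W^{1,p}_0(\Omega)$-local minimizer. Since $F_+(z,x)\equiv 0$ for $x\leq 0$, only the behaviour of $F(z,\cdot)$ on a right neighbourhood of zero is relevant, and that behaviour is precisely what hypothesis $H(f)(iv)$ controls.

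To execute the first step, fix any $\alpha'\in(\alpha,\hat{\lambda}_1(2))$. The uniform limit $f(z,x)/x\to\alpha$ as $x\to 0^+$ from $H(f)(iv)$ furnishes $\delta>0$ such that $f(z,x)\leq\alpha' x$ for a.a.\ $z\in\Omega$ and all $x\in[0,\delta]$, and integrating gives $F(z,x)\leq\frac{\alpha'}{2}x^2$ on the same range. For $u\in C^1_0(\overline{\Omega})$ with $||u||_{C^1_0(\overline{\Omega})}\leq\delta$ we then have $0\leq u^+(z)\leq\delta$ pointwise, so the identity $F_+(z,u(z))=F(z,u^+(z))$ yields
$$\int_\Omega F_+(z,u)\,dz=\int_\Omega F(z,u^+)\,dz\leq\frac{\alpha'}{2}||u^+||_2^2\leq\frac{\alpha'}{2}||u||_2^2.$$
Dropping the nonnegative $p$-term and applying the Rayleigh characterization $\hat{\lambda}_1(2)||u||_2^2\leq||Du||_2^2$, one obtains
$$\varphi_+(u)\geq\frac{1}{p}||Du||_p^p+\frac{1}{2}\left(1-\frac{\alpha'}{\hat{\lambda}_1(2)}\right)||Du||_2^2\geq 0=\varphi_+(0),$$
which is the desired local $C^1_0(\overline{\Omega})$-minimizer property at the origin.

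The second step is a direct invocation of Proposition~\ref{prop2} applied to $\varphi_+$ (with the role of $F_0$ there played by $-F_+$); the positive truncation $f_+$ inherits the subcritical growth bound (\ref{eq17}) from $f$, so the hypotheses of Proposition~\ref{prop2} are met, and one concludes that there exists $\rho_1>0$ with $\varphi_+(0)\leq\varphi_+(h)$ for every $h\in W^{1,p}_0(\Omega)$ satisfying $||h||\leq\rho_1$. I anticipate no genuine obstacle here: the only point that requires thought is the choice of $\alpha'$ strictly between $\alpha$ and $\hat{\lambda}_1(2)$, which is available precisely because of the strict inequality $\alpha<\hat{\lambda}_1(2)$ built into $H(f)(iv)$.
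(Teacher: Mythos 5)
Your proposal is correct and follows essentially the same route as the paper: use $H(f)(iv)$ to bound $F(z,x)$ by $\tfrac{1}{2}\alpha' x^2$ with $\alpha'<\hat{\lambda}_1(2)$ on a right neighbourhood of $0$, absorb the resulting term into $\tfrac{1}{2}||Du||_2^2$ via the Rayleigh quotient to get $\varphi_+\geq 0=\varphi_+(0)$ on a small $C^1_0(\overline{\Omega})$-ball, and then upgrade to a $W^{1,p}_0(\Omega)$-local minimizer by Proposition~\ref{prop2}. The only cosmetic difference is that the paper splits $u$ into $u^{\pm}$ and applies the Poincar\'e--Rayleigh inequality to $u^+$ alone, whereas you use $||u^+||_2\leq||u||_2$ and apply it to $u$; both are valid.
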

\begin{proof}
	Hypothesis $H(f)(iv)$ implies that given $\epsilon>0$, we can find $\delta=\delta(\epsilon)>0$ such that
	\begin{equation}\label{eq49}
		F(z,x)\leq\frac{1}{2}\,(\alpha+\epsilon)x^2\ \mbox{for almost all}\ z\in\Omega,\ \mbox{for all}\ 0\leq x\leq\delta.
	\end{equation}
	
	Let $u\in C^1_0(\overline\Omega)$ with $||u||_{C^1_0(\overline\Omega)}\leq\delta$. Then
	\begin{eqnarray*}
		\varphi_+(u) &\geq & \frac{1}{p}||Du||^p_p+\frac{1}{2}||Du||^2_2 - \frac{\alpha+\epsilon}{2}||u^+||^2_2\ \mbox{(see (\ref{eq49}))} \\
		&\geq & \frac{1}{p}||Du||^p_p + \frac{1}{2}||Du^-||^2_2 + \frac{1}{2}\left[||Du^+||^2_2 - \alpha||u^+||^2_2\right] - \frac{\epsilon}{2\hat{\lambda}_1(2)}||Du^+||^2_2\ \mbox{(see (\ref{eq3}))} \\
		&\geq & \frac{1}{p}||Du||^p_p + \frac{1}{2}||Du^-||^2_2+\frac{1}{2}\left[c_4-\frac{\epsilon}{\hat{\lambda}_1(2)}\right]||Du^+||^2_2\ \mbox{for some}\ c_4>0\\
		&&\mbox{(recall that $\alpha<\hat{\lambda}_1(2)$)}.
	\end{eqnarray*}
	
	Choosing $\epsilon\in(0,\hat{\lambda}_1(2)c_4)$, we have
	\begin{align*}
		& \varphi_+(u)\geq\frac{1}{p}||Du||^p_p\ \mbox{for all}\ u\in C^1_0(\overline\Omega), \mbox{with}\ ||u||_{C^1_0(\overline\Omega)}\leq\delta, \\
		\Rightarrow & u=0\ \mbox{is a local}\ C^1_0(\overline\Omega)\mbox{-minimizer of}\ \varphi_+, \\
		\Rightarrow & u=0\ \mbox{is a local}\ W^{1,p}_0(\Omega)\mbox{-minimizer of}\ \varphi_+\ \mbox{(see Proposition \ref{prop2})}.
	\end{align*}
The proof is complete.
\end{proof}

Now we are ready to produce constant sign smooth solutions for problem (\ref{eq1}).
\begin{prop}\label{prop11}
	If hypotheses $H(f)$ hold, then problem (\ref{eq1}) admits two constant sign smooth solutions
	$$u_0\in {\rm int}\,C_+\ \mbox{and}\ v_0\in -{\rm int}\,C_+.$$
\end{prop}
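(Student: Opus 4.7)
The plan is to construct $u_0$ via a mountain pass argument for $\varphi_+$ and $v_0$ via direct minimization of $\varphi_-$. The hard technical work is already done: Proposition \ref{prop7} gives the Cerami condition for $\varphi_+$, Proposition \ref{prop10} puts $0$ at the bottom of a basin for $\varphi_+$, and Proposition \ref{prop8} gives coercivity of $\varphi_-$.

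\emph{Positive solution.} Proposition \ref{prop10} makes $u=0$ a local $W^{1,p}_0(\Omega)$-minimizer of $\varphi_+$. One may assume $0$ is isolated in $K_{\varphi_+}$, otherwise infinitely many small nontrivial critical points already give the conclusion; then a standard argument yields $r>0$ with $\inf\{\varphi_+(u):\|u\|=r\}=m_r>0=\varphi_+(0)$. The second ``mountain'' comes from evaluating along $t\hat u_1(p)$: from $H(f)(ii)$, integrating the uniform inequality $f(z,x)\geq(\eta(z)-\epsilon)x^{p-1}$ for $x$ large, one obtains $\liminf_{x\to+\infty}pF(z,x)/x^p\geq\eta(z)$ uniformly in $z$. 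Fatou's lemma together with $p>2$ then yields
\[
\limsup_{t\to+\infty}\frac{p\,\varphi_+(t\hat u_1(p))}{t^p}\leq\hat\lambda_1(p)-\int_\Omega\eta(z)\hat u_1(p)^p\,dz<0,
\]
the strict inequality because $\eta\not\equiv\hat\lambda_1(p)$ and $\hat u_1(p)\in\mathrm{int}\,C_+$ is everywhere positive, so $\int_\Omega\eta\hat u_1(p)^p\,dz>\hat\lambda_1(p)\int_\Omega\hat u_1(p)^p\,dz=\hat\lambda_1(p)$. Hence $\varphi_+(t\hat u_1(p))\to-\infty$ and Theorem \ref{th1} delivers a critical point $u_0$ with $\varphi_+(u_0)\geq m_r>0$, so $u_0\neq 0$. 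Testing $\varphi_+'(u_0)=0$ with $-u_0^-$ forces $\|Du_0^-\|_p^p+\|Du_0^-\|_2^2=0$, hence $u_0\geq 0$, and since $f_+=f$ on $\RR_+$, $u_0$ solves (\ref{eq1}). Nonlinear regularity (Lieberman) places $u_0\in C^1_0(\overline\Omega)$, and with $\rho=\|u_0\|_\infty$ hypothesis $H(f)(iv)$ provides $\hat\xi_\rho>0$ such that $-\Delta_p u_0-\Delta u_0+\hat\xi_\rho u_0^{p-1}\geq 0$; the strong maximum principle then gives $u_0\in\mathrm{int}\,C_+$.

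\emph{Negative solution.} Coercivity of $\varphi_-$ (Proposition \ref{prop8}) together with sequential weak lower semicontinuity yields a minimizer $v_0\in W^{1,p}_0(\Omega)$. From $H(f)(iv)$, since $\beta>\hat\lambda_1(2)$ one has $F(z,x)\geq\tfrac{1}{2}(\beta-\epsilon)x^2$ for $x<0$ with $|x|$ small, for some $\epsilon>0$ with $\beta-\hat\lambda_1(2)-\epsilon>0$. Evaluating $\varphi_-$ at $-t\hat u_1(2)$ (and using $F_-(z,\cdot)=F(z,\cdot)$ on $\RR_-$) gives
\[
\varphi_-(-t\hat u_1(2))\leq\frac{t^p}{p}\|D\hat u_1(2)\|_p^p-\frac{t^2}{2}\bigl(\beta-\hat\lambda_1(2)-\epsilon\bigr)\|\hat u_1(2)\|_2^2<0
\]
for all sufficiently small $t>0$ because $p>2$ makes the quadratic term dominate. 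Hence $\varphi_-(v_0)<0=\varphi_-(0)$, so $v_0\neq 0$. Testing $\varphi_-'(v_0)=0$ with $v_0^+$ and using $f_-\equiv 0$ on $\RR_+$ yields $v_0\leq 0$, so $v_0$ solves (\ref{eq1}). The same regularity plus strong-maximum-principle argument as for $u_0$ gives $v_0\in-\mathrm{int}\,C_+$.

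The main obstacle is producing the descent direction for $\varphi_+$: the crossing condition $\eta\geq\hat\lambda_1(p)$, $\eta\not\equiv\hat\lambda_1(p)$, has to be transferred from the quotient $f(z,x)/x^{p-1}$ to $pF(z,x)/x^p$ uniformly in $z$, and this strict inequality must then propagate through Fatou to force $\varphi_+(t\hat u_1(p))\to-\infty$ despite the permitted partial resonance with $\hat\lambda_1(p)$. Once this descent is established, the rest is routine.
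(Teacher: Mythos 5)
Your proposal is correct and follows essentially the same route as the paper: mountain pass for $\varphi_+$ (local minimum at $0$ from Proposition \ref{prop10}, Cerami condition from Proposition \ref{prop7}, descent along $t\hat u_1(p)$ driven by $\int_\Omega[\eta(z)-\hat\lambda_1(p)]\hat u_1(p)^p\,dz>0$ and $p>2$), and direct minimization of the coercive $\varphi_-$ with the test function $-t\hat u_1(2)$ for small $t>0$ to rule out the trivial minimizer. The only cosmetic difference is that you pass the crossing condition to $pF(z,x)/x^p$ and invoke Fatou, where the paper integrates $H(f)(ii)$ into the pointwise bound $F(z,x)\geq\frac{1}{p}[\eta(z)-\epsilon]x^p-c_5$ and estimates directly; both yield the same conclusion.
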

\begin{proof}
	We can easily check that $K_{\varphi_+}\subseteq C_+$. So, we may assume that $K_{\varphi_+}$ is finite or otherwise we already have an infinity of positive solutions for problem (\ref{eq1}). Then on account of Proposition \ref{prop10}, we can find small $\rho\in(0,1)$  such that
	\begin{equation}\label{eq50}
		0=\varphi_+(0)<\inf\left[\varphi_+(u): ||u||=\rho\right]=m_+
	\end{equation}
	(see Aizicovici, Papageorgiou \& Staicu \cite{1}, proof of Proposition 29). Hypotheses $H(f)(i),(ii)$ imply that given $\epsilon>0$, we can find $c_3>0$ such that
	\begin{equation}\label{eq51}
		F(z,x)\geq\frac{1}{p}[\eta(z)-\epsilon]x^p-c_5\ \mbox{for almost all}\ z\in\Omega,\ \mbox{for all}\ x\geq0.
	\end{equation}
	
	Then for $t>0$ we have
	\begin{eqnarray*}
		\varphi_+(t\hat{u}_1(p))\leq\frac{t^p}{p}\hat{\lambda}_1(p)+\frac{t^2}{2}||D\hat{u}_1(p)||^2_2 - \frac{t^p}{p}\int_\Omega\eta(z)\hat{u}_1(p)^pdz + \frac{t^p}{p}\epsilon + c_5|\Omega|_N \\
		\mbox{(see (\ref{eq51}) and recall that $||\hat{u}_1(p)||_p=1$)} \\
		=\frac{t^p}{p}\left[\int_\Omega[\hat{\lambda}_1(p)-\eta(z)]\hat{u}_1(p)^pdz+\epsilon\right]+
\frac{t^2}{2}||D\hat{u}_1(p)||^2_2+c_5|\Omega|_N.
	\end{eqnarray*}
	
	Since $\hat{u}_1(p)\in {\rm int}\,C_+$, we have
	$$k_0=\int_\Omega[\eta(z)-\hat{\lambda}_1(p)]\hat{u}_1(p)^pdz>0.$$
	
	Choosing $\epsilon\in(0,k_0)$, we have
	$$\varphi_+(t\hat{u}_1(p))\leq-\frac{t^p}{p}c_6 + \frac{t^2}{2}||D\hat{u}_1(p)||^2_2\ \mbox{for some $c_6>0$}.$$
	
	Since $p>2$, it follows that
	\begin{equation}\label{eq52}
		\varphi_+(t\hat{u}_1(p))\rightarrow-\infty\ \mbox{as}\ t\rightarrow+\infty.
	\end{equation}
	
	Also from Proposition \ref{prop7} we know that
	\begin{equation}\label{eq53}
		\varphi_+\ \mbox{satisfies the $C$-condition}.
	\end{equation}
	
	Then relations (\ref{eq50}), (\ref{eq52}), (\ref{eq53}) permit the use of Theorem \ref{th1} (the mountain pass theorem). So, we can find $u_0\in W^{1,p}_0(\Omega)$ such that
	\begin{align*}
		& u_0\in K_{\varphi_+}\ \mbox{and}\ \varphi_+(0)=0<m_+\leq\varphi_+(u_0), \\
		\Rightarrow & u_0\neq0.
	\end{align*}
	
	Since $K_{\varphi_+}\subseteq C_+$, we have that $u_0\in C_+\backslash\{0\}$. With $\rho=||u_0||_\infty$, let $\hat{\xi}_p>0$ be as postulated by hypothesis $H(f)(iv)$, that is, for almost all $z\in\Omega$, the function
	$$x\mapsto f(z,x) + \xi_p x^{p-1}$$
	is nondecreasing on $[0,\rho]$.
	
	Consider the map $a:\RR^N\rightarrow\RR^N$ defined by
	$$a(y)=|y|^{p-2}y+y\ \mbox{for all}\ y\in\RR^N.$$
	
	Then $a\in C^1(\RR^N,\RR^N)$ and
	\begin{align*}
		& {\rm div}\,a(Du)=\Delta_pu+\Delta u\ \mbox{for all}\ u\in W^{1,p}_0(\Omega), \\
		\Rightarrow & \nabla a(y) = |y|^{p-2}y\left[{\rm id}_N + (p-2)\frac{y\otimes y}{|y|^2}\right] + id_N
	\end{align*}
	with ${\rm id}_N$ being the identity map on $\RR^N$. For all $\xi\in\RR^N$ we have
	$$(\nabla a(y)\xi,\xi)_{\RR^N}\geq|\xi|^2>0\ \mbox{for all}\ \xi\in\RR^N\backslash\{0\}.$$
	
	Also, for $0\leq x\leq v\leq \rho$, we have
	\begin{align*}
		f(z,v)-f(z,x)\geq & -\xi_p(v^{p-1}-x^{p-1}) \\
		\geq & -\hat{\xi}_p(v-x)\ \mbox{for almost all}\ z\in\Omega,\ \mbox{for some}\ \hat{\xi_p}>0\ \mbox{(recall that $p>2$)}.
	\end{align*}
	
	So, we can apply the tangency principle of Pucci \& Serrin \cite[Theorem 2.5.2, p. 35]{27} and have
	$$u_0(z)>0\ \mbox{for all}\ z\in\Omega.$$
	
	Then the boundary point theorem of Pucci \& Serrin \cite[Theorem 5.5.1, p. 120]{27} implies that
	$$u_0\in {\rm int}\, C_+.$$
	
	By the Sobolev embedding theorem, $\varphi_-$ is sequentially weakly lower semicontinuous. Also, from Proposition \ref{prop8} we know that $\varphi_-$ is coercive. Hence by the Weierstrass-Tonelli theorem, we can find $v_0\in W^{1,p}_0(\Omega)$ such that
	\begin{align}
		& \varphi_-(v_0) = \inf[\varphi_-(u): u\in W^{1,p}_0(\Omega)], \label{eq54} \\
		\Rightarrow\ & v_0\in K_{\varphi_-}\subseteq -C_+. \nonumber
	\end{align}
	
	Hypotheses $H(f)(i),(ii),(iv)$ imply that given $\epsilon>0$, we can find $c_7=c_7(\epsilon)>0$ such that
	\begin{equation}\label{eq55}
		F(z,x)\geq\frac{1}{2}(\beta-\epsilon)x^2-c_7|x|^p\ \mbox{for almost all}\ z\in\Omega,\ \mbox{for all}\ x\leq0.
	\end{equation}
	Then for $t>0$ we have
	\begin{eqnarray*}
		\varphi_-(-t\hat{u}_1(2)) &\leq & \frac{t^p}{p}||D\hat{u}_1(2)||^p_p + \frac{t^2}{2}\hat{\lambda}_1(2) - \frac{t^2}{2}(\beta-\epsilon) + c_7t^p||\hat{u}_1(2)||^p_p \\
		&= & t^p \left[\frac{1}{p}||D\hat{u}_1(2)||^p_p + ||\hat{u}_1(2)||^p_p\right] - \frac{t^2}{2}[\beta-\epsilon-\hat{\lambda}_1(2)]\\
		&&\mbox{(see (\ref{eq55}) and recall that $||\hat{u}_1(2)||_2=1$)}.
	\end{eqnarray*}
	
	We choose $0<\epsilon<\beta-\hat{\lambda}_1(2)$ (see hypothesis $H(f)(iv)$). Then since $2<p$ for $t\in(0,1)$ small we can see that
	\begin{align*}
		& \varphi_-(-t\hat{u}_1(2))<0, \\
		\Rightarrow\ & \varphi_-(v_0)<0 = \varphi_-(0)\ \mbox{(see (\ref{eq54}))}, \\
		\Rightarrow\ & v_0 \neq0.
	\end{align*}
	
	Moreover, as for $u_0$, using the nonlinear strong maximum principle we have $v_0\in-{\rm int}\, C_+$ and this is the second constant sign solution of (\ref{eq1}).
\end{proof}

\section{Nodal Solutions -- Multiplicity Theorems}
In this section, using tools from Morse theory (critical groups), we show the existence of a nodal (sign changing) smooth solution and formulate our multiplicity theorems.

To produce a nodal sign, changing solution, we will need one more hypothesis which is the following one:

\smallskip
$H_0$: Problem (\ref{eq1}) has a finite number of solutions of constant sign.
\begin{remark}
	This condition is equivalent to saying that $K_{\varphi_+}$ and $K_{\varphi_-}$ are finite sets.
\end{remark}

We start by computing the critical groups of $\varphi$ at infinity.
\begin{prop}\label{prop12}
If hypotheses $H(f)$	 hold, then $C_k(\varphi,\infty)=0$ for all $k\in\NN_0.$
\end{prop}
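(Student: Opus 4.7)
The plan is to show that, for a sufficiently negative level $c$, the sublevel set $\varphi^c$ is a strong deformation retract of $X=W^{1,p}_0(\Omega)$ and is itself contractible. Since $X$ is contractible, the long exact homology sequence of the pair $(X,\varphi^c)$ then collapses and yields $H_k(X,\varphi^c)=0$ for every $k\in\NN_0$; because $\varphi$ satisfies the $C$-condition (Proposition \ref{prop6}), this is by definition $C_k(\varphi,\infty)=0$ for every $k\in\NN_0$. If $K_\varphi$ is infinite the multiplicity conclusion already holds, so we may assume $K_\varphi$ is finite; then $\inf\varphi(K_\varphi)>-\infty$ and $c$ may be chosen below it.

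The key asymptotic input is that $\varphi$ is pushed to $-\infty$, uniformly on bounded sets, along the half-line generated by the positive $L^p$-normalized principal eigenfunction $\hat{u}_1(p)\in\mathrm{int}\,C_+$. From $H(f)(ii)$ one obtains $F(z,x)\geq\frac{1}{p}[\eta(z)-\varepsilon]x^p-c_\varepsilon$ for all $x\geq 0$ (cf.\ the proof of Proposition \ref{prop11}). Combining this with $\|D\hat{u}_1(p)\|_p^p=\hat{\lambda}_1(p)$, $\|\hat{u}_1(p)\|_p=1$, $p>2$, and the strict positivity
$$k_0:=\int_\Omega[\eta(z)-\hat{\lambda}_1(p)]\hat{u}_1(p)^p\,dz>0$$
(which holds because $\hat{u}_1(p)>0$ in $\Omega$ and $\eta\not\equiv\hat{\lambda}_1(p)$), a direct expansion gives, for every bounded $B\subseteq X$,
$$\varphi(u+t\hat{u}_1(p))\leq-\tfrac{k_0}{2p}t^p+o(t^p),\qquad \tfrac{d}{dt}\varphi(u+t\hat{u}_1(p))\leq-\tfrac{k_0}{2}t^{p-1}+o(t^{p-1}),$$
as $t\to+\infty$, uniformly in $u\in B$. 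Fix $c$ negative enough that these estimates make the first crossing time
$$T(u):=\inf\{t\geq 0:\ \varphi(u+t\hat{u}_1(p))\leq c\}$$
finite for every $u\in X$, transversal whenever positive (by the derivative estimate and the implicit function theorem), and hence continuous in $u$; clearly $T(u)=0$ on $\varphi^c$.

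The homotopy $h(s,u):=u+s\,T(u)\hat{u}_1(p)$ then satisfies $h(0,\cdot)=\mathrm{id}_X$, $h(1,X)\subseteq\varphi^c$, and $h(s,u)=u$ for every $u\in\varphi^c$ and every $s\in[0,1]$, exhibiting $\varphi^c$ as a strong deformation retract of the contractible space $X$. Hence $\varphi^c$ is contractible and the first paragraph concludes the argument. The main technical obstacle is the continuity of $T$ near the boundary $\{\varphi=c\}$: the scalar function $t\mapsto\varphi(u+t\hat{u}_1(p))$ is continuous but a priori not monotone, so the crossing with the level $c$ could in principle be tangential, giving only upper semicontinuity of $T$. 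The uniform derivative estimate above is precisely what rules this out when $|c|$ is sufficiently large, since the crossing is then forced into the regime where $\tfrac{d}{dt}\varphi(u+t\hat{u}_1(p))$ is bounded above by a strictly negative constant uniformly on bounded sets of $u$. Should this quantitative step prove awkward, an equivalent strategy is to combine a negative pseudo-gradient flow on the strip $\{\varphi\geq c\}$ (using the $C$-condition from Proposition \ref{prop6}) with the asymptotic estimate to build the same deformation retraction onto $\varphi^c$.
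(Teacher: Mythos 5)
Your overall strategy is genuinely different from the paper's: you try to compute $H_k(W^{1,p}_0(\Omega),\varphi^c)$ directly by retracting the whole space onto $\varphi^c$ along the ray direction $\hat{u}_1(p)$, whereas the paper first uses the homotopy invariance of critical groups at infinity to replace $\varphi$ by the model functional $\Psi(u)=\frac{1}{p}\|Du\|^p_p-\frac{\lambda}{p}\|u^+\|^p_p$ (with $\lambda>\hat{\lambda}_1(p)$, $\lambda\notin\hat{\sigma}(p)$), and then kills $C_k(\Psi,\infty)=C_k(\Psi,0)$ by perturbing $\Psi$ with a linear term and invoking Picone's identity. Your asymptotic expansion of $\varphi(u+t\hat{u}_1(p))$ and of its $t$-derivative is essentially correct on bounded sets (it is the same computation that gives (\ref{eq52}) in Proposition \ref{prop11}), but the construction of the retraction has a genuine gap at the continuity of the first crossing time $T$.

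Concretely: $T$ is automatically lower semicontinuous, and upper semicontinuity at a point $u$ requires the crossing to be transversal \emph{at $T(u)$ itself}, i.e.\ $\frac{d}{dt}\varphi(u+t\hat{u}_1(p))\big|_{t=T(u)}<0$. Your derivative estimate only controls the regime $t\geq t_B$ for $u$ in a fixed bounded set $B$, with $t_B$ depending on $B$. But $\varphi$ is unbounded below (by (\ref{eq52})), so for \emph{any} fixed level $c$ there are points $u$ with $\varphi(u)$ barely above $c$; their first crossing occurs at arbitrarily small $t$, where you have no information on the sign of the derivative, and a tangential touch of the level $c$ there makes $T$ jump. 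Taking $|c|$ larger does not remove these points, it only relocates them, so the claim that a large $|c|$ ``forces the crossing into the transversal regime'' fails. (A second, related defect is that all your estimates are uniform only on bounded sets, while $h$ must be defined and continuous on the whole unbounded space.) Your fallback does not repair this: a negative pseudo-gradient flow cannot deform $X$ onto $\varphi^c$ because $K_\varphi$ contains critical points above the level $c$ (at least $0$, $u_0$, $v_0$), and the flow is stuck at them — the second deformation theorem only retracts $\varphi^{c}$ onto $\varphi^{c'}$ for an interval free of critical values. To make the argument rigorous you would have to show that $\varphi^c$ itself has trivial reduced homology by some other device; the paper's detour through $\Psi$ and the perturbation $\hat{h}(t,u)=\Psi(u)-t\int_\Omega u\,dz$, where Picone's identity shows $K_{\hat{h}(t,\cdot)}=\emptyset$ for $t>0$, is precisely the mechanism that avoids this transversality issue.
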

\begin{proof}
	Let $\lambda>\hat{\lambda}_1(p),\ \lambda\not\in\hat{\sigma}(p)$ and consider the $C^1$-functional $\Psi:W^{1,p}_0(\Omega)\rightarrow\RR$ defined by
	\begin{equation}\label{eq56}
		\Psi(u)=\frac{1}{p}||Du||^p_p - \frac{\lambda}{p}||u^+||^p_p\ \mbox{for all}\ u\in W^{1,p}_0(\Omega).
	\end{equation}
	
	We consider the homotopy
	$$h(t,u) = (1-t)\varphi(u) + t\Psi(u)\ \mbox{for all}\ t\in[0,1],\ \mbox{for all}\ u\in W^{1,p}_0(\Omega).$$
	\begin{claim}\label{claim3}
		There exist $\gamma\in\RR$ and $\tau>0$ such that
		$$h(t,u)\leq\gamma\Rightarrow(1+||u||)||h'_u(t,u)||_*\geq\tau\ \mbox{for all}\ t\in[0,1].$$
	\end{claim}
	We argue by contradiction. Since $h(\cdot,\cdot)$ maps bounded sets to bounded sets, if the claim is not true, then we can find two sequences
	$\{t_n\}_{n\geq1}\subseteq[0,1]$ and $\{u_n\}_{n\geq1}\subseteq W^{1,p}_0(\Omega)$
	such that
	\begin{equation}\label{eq57}
		t_n\rightarrow t,\ ||u_n||\rightarrow\infty,\ h(t_n,u_n)\rightarrow-\infty\ \mbox{and}\ (1+||u_n||)h'_u(t_n,u_n)\rightarrow0.
	\end{equation}
	
	From the last convergence in (\ref{eq57}) we have
	\begin{eqnarray}
		|\langle A_p(u_n),h\rangle + (1-t)\langle A(u_n),h\rangle - (1-t_n)\int_\Omega f(z,u_n)hdz - \lambda t_n\int_\Omega(u^+_n)^{p-1}hdz| \nonumber \\
		\leq \frac{\epsilon_n||h||}{1+||u_n||} \label{eq58} \\
		\mbox{for all}\ h\in W^{1,p}_0(\Omega), \mbox{with}\ \epsilon_n\rightarrow0^+. \nonumber
	\end{eqnarray}
	
	From the third convergence in (\ref{eq57}), we see that we can find $n_0\in\NN$ such that
	\begin{eqnarray}
		||Du_n||^p_p + \frac{(1-t_n)p}{2}||Du_n||^2_2 - (1-t_n)\int_\Omega pF(z,u_n)dz - \lambda t_n||u^+_n||^p_p\leq-1 \label{eq59} \\
		\mbox{for all}\ n\geq n_0. \nonumber
	\end{eqnarray}
	
	In (\ref{eq58}) we choose $h=u_n\in W^{1,p}_0(\Omega)$. Then
	\begin{eqnarray}
		-||Du_n||^p_p - (1-t_n)||Du_n||^2_2 + (1-t_n)\int_\Omega f(z,u_n)u_ndz + \lambda t_n ||u^+_n||^p_p\leq\epsilon_n \label{eq60} \\
		\mbox{for all}\ n\in\NN. \nonumber
	\end{eqnarray}
	
	Adding (\ref{eq59}) and (\ref{eq60}), we obtain
	\begin{eqnarray}
		(1-t_n)\int_\Omega[f(z,u_n)u_n - pF(z,u_n)]dz\leq0\ \mbox{for all}\ n\geq n_1\geq n_0 \label{eq61} \\
		\mbox{(recall that $p>2$ and $\epsilon_n\rightarrow0^+$ as $n\rightarrow+\infty$).} \nonumber
	\end{eqnarray}
	
	We claim that $t<1$. If $t_n\rightarrow1$, then let $y_n=\frac{u_n}{||u_n||},\ n\in\NN$. We have $||y_n||=1$ for all $n\in\NN$ and so we may assume that
	\begin{equation}\label{eq62}
		y_n\xrightarrow{w}y\ \mbox{in}\ W^{1,p}_0(\Omega)\ \mbox{and}\ y_n\rightarrow y\ \mbox{in}\ L^p(\Omega).
	\end{equation}
	
	From (\ref{eq58}) we have
	\begin{eqnarray}
		|\langle A_p(y_n),h\rangle + \frac{1-t_n}{||u_n||^{p-2}}\langle A(y_n),h\rangle - (1-t_n)\int_\Omega\frac{N_f(u_n)}{||u_n||^{p-1}}hdz - \lambda t_n\int_\Omega(y^+_n)^{p-1}hdz| \nonumber \\
		\leq \frac{\epsilon_n||h||}{(1+||u_n||)||u_n||^{p-1}} \label{eq63} \\
		\mbox{for all}\ n\in\NN. \nonumber
	\end{eqnarray}
	
	In (\ref{eq63}) we choose $h=y_n-y$, pass to the limit as $n\rightarrow\infty$ and use (\ref{eq62}), (\ref{eq17}) and  $t_n\rightarrow 1,\ p>2$. Then
	\begin{eqnarray}\label{eq64}
		&& \lim_{n\rightarrow\infty}\langle A_p(y_n), y_n-y\rangle=0,\nonumber\\
		&\Rightarrow & y_n\rightarrow y\ \mbox{in}\ W^{1,p}_0(\Omega)\ \mbox{and so}\ ||y||=1\ \mbox{(see Proposition \ref{prop3}).}
	\end{eqnarray}
	
	So, if in (\ref{eq63}) we pass to the limit as $n\rightarrow\infty$ and use (\ref{eq64}), then
	\begin{equation}\label{eq65}
		\langle A_p(y),h\rangle=\lambda\int_\Omega(y^+)^{p-1}hdz\ \mbox{for all}\ h\in W^{1,p}_0(\Omega)
	\end{equation}
	(recall that $t_n\rightarrow1$). Choosing $h=-y^-\in W^{1,p}_0(\Omega)$, we have
	\begin{align*}
		& ||Dy^-||^p_p=0, \\
		\Rightarrow & y\geq0,\ y\neq0\ \mbox{(see (\ref{eq64}))}.
	\end{align*}
	
	From (\ref{eq65}) we have
	\begin{equation}\label{eq66}
		-\Delta_p y(z)=\lambda y(z)^{p-1}\ \mbox{for almost all}\ z\in\Omega,\ y|_{\partial\Omega}=0.
	\end{equation}
	
	Since $\lambda>\hat{\lambda}_1(p),\lambda\not\in\hat{\sigma}(p)$, from (\ref{eq65}) we infer that
	$$y=0,$$
	which contradicts (\ref{eq64}). Therefore $t<1$ and we have
	\begin{align}
		& \int_\Omega\left[f(z,u_n)u_n - pF(z,u_n)\right]dz\leq0\ \mbox{for all}\ n\geq n_1, \nonumber \\
		\Rightarrow & \int_\Omega\left[f(z,-u^-_n)(-u^-_n) - pF(z, -u^-_n)\right]dz\leq c_8 \label{eq67} \\
		& \mbox{for some}\ c_8>0, \mbox{all}\ n\geq n_1\ \mbox{(see hypothesis $H(f)(iii)$).} \nonumber
	\end{align}
	
	Using (\ref{eq67}) and reasoning as in claims \ref{claim1} and \ref{claim2} in the proof of Proposition \ref{prop6}, we establish that
	$$\{u_n\}_{n\geq1}\subseteq W^{1,p}_0(\Omega)\ \mbox{is bounded}.$$
	This contradicts (\ref{eq57}).
	Therefore we have proven the claim.
	
	Then the claim and Theorem 5.1.21 of Chang \cite[p. 334]{8} (see also Proposition 3.2 of Liang \& Su \cite{17}), we have
	\begin{align}
		& C_k(h(0,\cdot),\infty) = C_k(h(1,\cdot),\infty)\ \mbox{for all}\ k\in\NN_0, \nonumber \\
		\Rightarrow\ & C_k(\varphi,\infty)=C_k(\Psi,\infty)\ \mbox{for all}\ k\in\NN_0. \label{eq68}
	\end{align}
	
	So, our task is now to compute $C_k(\Psi,\infty)$. To this end, first note that since $\lambda>\hat{\lambda}_1(p)$, then
	\begin{equation}\label{eq69}
		K_\Psi=\{0\}.
	\end{equation}
	
	Consider the $C^1$-functional $\hat{h}:[0,1]\times W^{1,p}_0(\Omega)\rightarrow\RR$ defined by
	$$\hat h (t,u)=\Psi(u)-t\int_\Omega u(z)dz\ \mbox{for all}\ t\in[0,1], \ \mbox{for all}\ u\in W^{1,p}_0(\Omega).$$
	
	Suppose that $u\in K_{\hat{h}(t,\cdot)},\ t\in(0,1]$. Then
	\begin{equation}\label{eq70}
		\langle A_p(u),h\rangle = \lambda\int_\Omega(u^+)^{p-1}hdz + t\int_\Omega hdz\ \mbox{for all}\ h\in W^{1,p}_0(\Omega).
	\end{equation}
	
	Choosing $h=-u^-\in W^{1,p}_0(\Omega)$, we obtain
	\begin{align*}
		& ||Du^-||^p_p\leq 0, \\
		\Rightarrow\ & u\geq0,\ u\neq0.
	\end{align*}
	
	From (\ref{eq70}) we have
	\begin{align}
		& -\Delta_p u(z) = \lambda u(z)^{p-1} + t\ \mbox{for almost all}\ z\in\Omega,\ u|_{\partial\Omega}=0, \label{eq71} \\
		\Rightarrow & u\in {\rm int}\, C_+ \nonumber \\
		& \mbox{(by the nonlinear strong maximum principle, see \cite[p. 738]{12})}. \nonumber
	\end{align}
	
	Let $v\in {\rm int}\,C_+$ and consider the function
	$$R(v,u)(z) = |Dv(z)|^p - |Du(z)|^{p-2}(Du(z), D(\frac{v^p}{u^{p-1}})(z))_{\RR_N}.$$
	
	From the nonlinear Picone's identity of Allegretto \& Huang \cite{4} we have
	\begin{align*}
		0 \leq & \int_\Omega R(v,u)dz \\
		= & ||Dv||^p_p - \int_\Omega(-\Delta_p u)\frac{v^p}{u^{p-1}}dz \\
		& \mbox{(using the nonlinear Green's identity, see \cite[p. 211]{12}),} \\
		= & ||Dv||^p_p - \int_\Omega[\lambda u^{p-1}+t]\frac{v^p}{u^{p-1}}dz\ \mbox{(see (\ref{eq71}))} \\
		\leq & ||Dv||^p_p - \int_\Omega\lambda v^p dz\ \mbox{(since $u,v\in {\rm int}\, C_+$)}.
	\end{align*}
	
	Choosing $v=\hat{u}_1(p)\in {\rm int}\, C_+$, we have
	$$0\leq\hat{\lambda}_1(p)-\lambda<0\ \mbox{(recall that $||\hat{u}_1(p)||_p=1$),}$$
	a contradiction. Therefore
	\begin{equation}\label{eq72}
		K_{\hat{h}(t,\cdot)}=\emptyset\ \mbox{for all}\ t\in(0,1].
	\end{equation}
	
	From the homotopy invariance of singular homology, for $r>0$ small we have
	\begin{eqnarray}\label{eq73}
		H_k(\hat{h}(0,\cdot)^0\cap B_r, \hat{h}(0,\cdot)^0\cap B_r\backslash\{0\}) = H_k(\hat{h}(1,\cdot)^0\cap B_r, \hat{h}(1,\cdot)^0\cap B_r\backslash\{0\}) \\
		\mbox{for all}\ k\in\NN. \nonumber
	\end{eqnarray}
	
	Then (\ref{eq72}) and the noncritical interval theorem (see Chang \cite[Theorem 5.1.6, p. 320]{8} and Motreanu, Motreanu \& Papageorgiou \cite[Corollary 5.35, p. 115]{19}), we have
	\begin{equation}\label{eq74}
		H_k(\hat{h}(1,\cdot)^0\cap B_r, \hat{h}(1,\cdot)^0\cap B_r\backslash\{0\}) = 0\ \mbox{for all}\ k\in\NN_0.
	\end{equation}
	
	Also from the definition of critical groups, we have
	\begin{align}
		& H_k(\hat{h}(0,\cdot)^0\cap B_r, \hat{h}(0,\cdot)^0\cap B_r\backslash\{0\}) \nonumber \\
		= & H_k(\Psi^0\cap B_r, \Psi^0\cap B_r\backslash\{0\}) = C_k(\Psi, 0)\ \mbox{for all}\ k\in\NN_0. \label{eq75}.
	\end{align}
	
	From (\ref{eq73}), (\ref{eq74}), (\ref{eq75}) we conclude that
	\begin{align*}
		& C_k(\Psi,0)=0\ \mbox{for all}\ k\in\NN_0, \\
		\Rightarrow\ & C_k(\Psi, \infty)=0\ \mbox{for all}\ k\in\NN_0 \\
		& \mbox{(see (\ref{eq69}) and \cite[Proposition 6.61(c), p. 160]{19})} \\
		\Rightarrow\ & C_k(\varphi,\infty)=0\ \mbox{for all}\ k\in\NN_0\ \mbox{(see (\ref{eq68}))}. \qedhere
	\end{align*}
\end{proof}

Next, we compute the critical groups at infinity for the functional $\varphi_\pm$.
\begin{prop}\label{prop13}
	If hypotheses $H(f)$ hold, then $C_k(\varphi_+,\infty)=0$ for all $k\in\NN_0$.
\end{prop}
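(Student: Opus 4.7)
The plan is to mimic the homotopy argument used for $\varphi$ in Proposition \ref{prop12}, reducing the computation of $C_k(\varphi_+,\infty)$ to that of the model functional $\Psi(u)=\frac{1}{p}\|Du\|_p^p-\frac{\lambda}{p}\|u^+\|_p^p$ (with $\lambda>\hat{\lambda}_1(p)$, $\lambda\notin\hat{\sigma}(p)$), whose critical groups at infinity were already shown to vanish.

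First, I would introduce the homotopy
\[
h(t,u)=(1-t)\varphi_+(u)+t\Psi(u),\qquad t\in[0,1],\ u\in W^{1,p}_0(\Omega),
\]
and verify the invariance condition of Chang \cite[Thm.~5.1.21]{8} / Liang--Su \cite{17}: there exist $\gamma\in\RR$ and $\tau>0$ such that $h(t,u)\leq\gamma$ implies $(1+\|u\|)\|h'_u(t,u)\|_*\geq\tau$. Assuming the contrary, one obtains sequences $t_n\to t$, $\|u_n\|\to\infty$, $h(t_n,u_n)\to-\infty$ and $(1+\|u_n\|)h'_u(t_n,u_n)\to 0$. The crucial simplification compared to Proposition \ref{prop12} is that both reaction terms in $h'_u(t_n,u_n)$ are supported on $\{u_n\geq 0\}$: using $f_+(z,x)=f(z,x^+)$ and choosing $-u_n^-$ as test function kills the nonlinear contributions, giving
\[
\|Du_n^-\|_p^p+(1-t_n)\|Du_n^-\|_2^2\leq\varepsilon_n,
\]
so that $u_n^-\to 0$ in $W^{1,p}_0(\Omega)$. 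In particular $\|u_n^+\|\to\infty$.

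Next I would carry over the argument of Proposition \ref{prop12} to $u_n^+$. Testing the derivative condition against $u_n$ and combining with $h(t_n,u_n)\leq-1$ (using $p>2$), I obtain
\[
(1-t_n)\int_\Omega\bigl[f(z,u_n^+)u_n^+-pF(z,u_n^+)\bigr]dz\leq -1+\varepsilon_n.
\]
To exclude $t=1$, I rescale $y_n=u_n/\|u_n\|$ and, exactly as in (\ref{eq63})--(\ref{eq66}), obtain in the limit that $-\Delta_p y=\lambda y^{p-1}$ with $y\geq 0$, $\|y\|=1$, contradicting $\lambda>\hat{\lambda}_1(p)$, $\lambda\notin\hat{\sigma}(p)$ (the only positive eigenfunctions belong to $\hat\lambda_1(p)$). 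Hence $t<1$, and the integral inequality reduces to the estimate from the proof of Proposition \ref{prop6} (hypothesis $H(f)(iii)$ bounds the integrand from below by a constant). Then rescaling $v_n=u_n^+/\|u_n^+\|$ and arguing as in Claim \ref{claim2}, I pass to the limit in the $(S)_+$ sense via Proposition \ref{prop3} to produce $v\geq 0$, $\|v\|=1$, solving
\[
-\Delta_p v=\bigl[(1-t)\tilde\eta(z)+\lambda t\bigr]v^{p-1}\quad\text{in }\Omega,\ v|_{\partial\Omega}=0,
\]
with $\eta(z)\leq\tilde\eta(z)\leq\hat\eta$. Setting $\mu(z)=(1-t)\tilde\eta(z)+\lambda t$, one has $\mu(z)\geq\hat\lambda_1(p)$ and $\mu\not\equiv\hat\lambda_1(p)$, so Proposition \ref{prop4} gives $\tilde\lambda_1(p,\mu)<1$, forcing $v$ to be nodal---a contradiction with $v\geq0$. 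This establishes the invariance condition.

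The invariance property then yields $C_k(\varphi_+,\infty)=C_k(\Psi,\infty)$ for all $k\in\NN_0$, and the proof of Proposition \ref{prop12} (the second half, based on the auxiliary homotopy $\hat h(t,u)=\Psi(u)-t\int_\Omega u\,dz$ and Picone's identity) already established $C_k(\Psi,\infty)=0$ for all $k\in\NN_0$. Therefore $C_k(\varphi_+,\infty)=0$ for all $k\in\NN_0$. The main obstacle I expect is the verification of the invariance condition; specifically, the eigenvalue analysis that excludes $t=1$ and the $(S)_+$/Proposition~\ref{prop4} argument that rules out a non-trivial limit---everything else is routine bookkeeping given that the negative part is killed immediately by the truncation.
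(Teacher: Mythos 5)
Your proposal follows the paper's proof of Proposition \ref{prop13} essentially verbatim: the same homotopy $h_+(t,u)=(1-t)\varphi_+(u)+t\Psi(u)$, the same elimination of the negative part by testing with $-u_n^-$, the same reduction of the positive-part blow-up to the argument of Claim \ref{claim2} from Proposition \ref{prop6}, and the same conclusion $C_k(\varphi_+,\infty)=C_k(\Psi,\infty)=0$. The only divergence is your extra step excluding $t=1$ via the integral inequality coming from $H(f)(iii)$; this is harmless but not needed here, because once $u_n^-\to0$ the limit weight $\mu=(1-t)\tilde\eta+\lambda t$ satisfies $\mu\geq\hat{\lambda}_1(p)$ and $\mu\not\equiv\hat{\lambda}_1(p)$ for every $t\in[0,1]$, so the nodal-eigenfunction contradiction of Claim \ref{claim2} applies uniformly in $t$.
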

\begin{proof}
	Let $\Psi\in C^1(W^{1,p}_0(\Omega),\RR)$ be as in the proof of Proposition \ref{prop12} and consider the homotopy
	$$h_+(t,u)=(1-t)\varphi_+(u)+t\Psi(u)\ \mbox{for all}\ t\in[0,1],\ \mbox{for all}\ u\in W^{1,p}_0(\Omega).$$

	\begin{claim}\label{claim4}
		There exist $\gamma\in\RR$ and $\tau>0$ such that for all $t\in[0,1]$
		$$h_+(t,u)\leq\gamma\Rightarrow(1+||u||)||(h_+)'(t,u)||_*\geq\tau.$$
	\end{claim}
	As in the proof of Proposition \ref{prop12}, we argue by contradiction. So, we can find two sequences
	$$\{t_n\}_{n\geq1}\subseteq[0,1]\ \mbox{and}\ \{u_n\}_{n\geq1}\subseteq W^{1,p}_0(\Omega)$$
	such that
	\begin{equation}\label{eq76}
		t_n\rightarrow t, ||u_n||\rightarrow\infty, h_+(t_n,u_n)\rightarrow-\infty\ \mbox{and}\ (1+||u_n||)(h_+)'(t_n,u_n)\rightarrow0.
	\end{equation}
	
	From the last convergence in (\ref{eq76}), we have
	\begin{eqnarray}\label{eq77}
		|\langle A_p(u_n),h\rangle + (1-t_n)\langle A(u_n),h\rangle - (1-t_n)\int_\Omega f_+(z,u_n)hdz -\lambda t_n\int_\Omega(u^+_n)^{p-1}hdz| \nonumber \\
		\leq \frac{\epsilon_n||h||}{1+||u_n||}\ \mbox{for all}\ h\in W^{1,p}_0(\Omega)\ \mbox{with}\ \epsilon_n\rightarrow0^+.
	\end{eqnarray}
	
	In (\ref{eq77}) we choose $h=-u^-_n\in W^{1,p}_0(\Omega)$. Then
	\begin{eqnarray}\label{eq78}
		&& ||Du^-_n||^p_p + (1-t)||Du^-_n||^2_2 \leq\epsilon_n\ \mbox{for all}\ n\in\NN, \nonumber \\
		&\Rightarrow & u^-_n\rightarrow0\ \mbox{in}\ W^{1,p}_0(\Omega).
	\end{eqnarray}
	
	From (\ref{eq77}) and (\ref{eq78}) we infer that
	\begin{eqnarray*}
		|\langle A_p(u^+_n),h\rangle + (1-t_n)\langle A(u^+_n),h\rangle - (1-t_n)\int_\Omega f(z,u^+_n)hdz -\lambda t_n\int_\Omega(u^+_n)^{p-1}hdz| \\
		\leq \epsilon'_n||h||\ \mbox{for all}\ h\in W^{1,p}_0(\Omega)\ \mbox{with}\ \epsilon'_n\rightarrow0^+.
	\end{eqnarray*}
	
	Suppose that $||u^+_n||\rightarrow+\infty$. We set $v_n=\frac{u^+_n}{||u^+_n||},\ n\in\NN$. Then $||v_n||=1,\ v_n\geq0$ for all $n\in\NN$ and so we may assume that
	$$v_n\xrightarrow{w}v\ \mbox{in}\ W^{1,p}_0(\Omega)\ \mbox{and}\ v_n\rightarrow v\ \mbox{in}\ L^p(\Omega).$$
	
	Reasoning as in the proof of Proposition \ref{prop6} (see the proof of Claim \ref{claim2}), we reach a contradiction and so we infer that
	\begin{align*}
		& \{u^+_n\}_{n\geq1}\subseteq W^{1,p}_0(\Omega)\ \mbox{is bounded}, \\
		\Rightarrow\ & \{u_n\}_{n\geq1}\subseteq W^{1,p}_0(\Omega)\ \mbox{is bounded (see (\ref{eq78}))}.
	\end{align*}
	
	But this contradicts (\ref{eq76}). Hence the claim holds and as before (see the proof of Proposition \ref{prop12}) we have
	\begin{align*}
		& C_k(h_+(0,\cdot),\infty) = C_k(h_+(1,\cdot),\infty)\ \mbox{for all}\ k\in\NN_0, \\
		\Rightarrow & C_k(\varphi_+,\infty) = C_k(\Psi,\infty)\ \mbox{for all}\ k\in\NN_0 \\
		\Rightarrow & C_k(\varphi_+,\infty) = 0\ \mbox{for all}\ k\in\NN_0 \\
		& \mbox{(see the end of the proof of Proposition \ref{prop12})}.
	\end{align*}
The proof is complete.
\end{proof}

\begin{prop}\label{prop14}
	If hypotheses $H(f)$ hold, then $C_k(\varphi_-,\infty)=\delta_{k,0}\ZZ$ for all $k\in\NN_0$.
\end{prop}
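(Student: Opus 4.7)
The plan is to exploit the fact, already established in Proposition \ref{prop8} and Corollary \ref{corollary9}, that $\varphi_-$ is both coercive and satisfies the $C$-condition. For a coercive $C^1$-functional on a Banach space, the critical groups at infinity are trivial in the sense that only the zeroth one is nonzero; this is the content of the proposition.

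First, I would verify that $\varphi_-$ is sequentially weakly lower semicontinuous on $W^{1,p}_0(\Omega)$. The two norm terms $\frac{1}{p}\|Du\|_p^p$ and $\frac{1}{2}\|Du\|_2^2$ are continuous convex functionals, hence sequentially weakly lower semicontinuous, and the integral term $\int_\Omega F_-(z,u)\,dz$ is sequentially weakly continuous by the Sobolev compact embedding into $L^p(\Omega)$ together with the growth estimate \eqref{eq17}. Combined with coercivity (Proposition \ref{prop8}), the Weierstrass--Tonelli theorem yields some $u^* \in W^{1,p}_0(\Omega)$ with
$$\varphi_-(u^*) = \inf_{W^{1,p}_0(\Omega)} \varphi_- =: m_-.$$
Since $u^*$ minimizes a $C^1$-functional, $u^* \in K_{\varphi_-}$, so $m_- = \inf\varphi_-(K_{\varphi_-}) > -\infty$.

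Next, I would choose any level $c < m_-$. Then the sublevel set $\varphi_-^c = \{u \in W^{1,p}_0(\Omega) : \varphi_-(u) \leq c\}$ is empty. Since the definition of $C_k(\varphi_-,\infty)$ is independent of the choice of level $c < \inf\varphi_-(K_{\varphi_-})$, I have
$$C_k(\varphi_-, \infty) = H_k\bigl(W^{1,p}_0(\Omega), \varphi_-^c\bigr) = H_k\bigl(W^{1,p}_0(\Omega), \emptyset\bigr) = H_k\bigl(W^{1,p}_0(\Omega)\bigr).$$
Because $W^{1,p}_0(\Omega)$ is a Banach space, hence contractible, its singular homology is concentrated in degree zero: $H_k(W^{1,p}_0(\Omega)) = \delta_{k,0}\ZZ$. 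This yields the claim.

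I do not expect any genuine obstacle here; the argument is purely a bookkeeping consequence of coercivity and $C$-condition, both of which have already been put in place. The only point that requires a moment's care is confirming that $\varphi_-$ is sequentially weakly lower semicontinuous so that the infimum is attained inside $K_{\varphi_-}$, but this is standard from the Sobolev compact embedding applied to the potential term.
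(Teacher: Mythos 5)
Your argument is correct and follows essentially the same route as the paper: the paper simply invokes coercivity (Proposition \ref{prop8}) and the $C$-condition (Corollary \ref{corollary9}) and then cites Proposition 6.64(a) of Motreanu, Motreanu \& Papageorgiou \cite{19}, whereas you additionally unpack the proof of that cited result (infimum attained at a critical point, so $\varphi_-^c=\emptyset$ for $c<m_-$, whence $C_k(\varphi_-,\infty)=H_k(W^{1,p}_0(\Omega))=\delta_{k,0}\ZZ$ by contractibility). No gaps.
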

\begin{proof}
	From Proposition \ref{prop8} we know that $\varphi_-$ is coercive. So, it is bounded below and satisfies the $C$-condition (see Corollary \ref{corollary9}). Hence Proposition 6.64(a) of Motreanu, Motreanu \& Papageorgiou \cite[p. 116]{19}, implies that
	$$C_k(\varphi_-,\infty)=\delta_{k,0}\ZZ\ \mbox{for all}\ k\in\NN_0.$$
\end{proof}

Next, we compute the critical groups of $\varphi$ at $u=0$.
\begin{prop}\label{prop15}
	If hypotheses $H(f)$ hold, then $C_k(\varphi,0)=\delta_{k,0}\ZZ$ for all $k\in\NN_0$.
\end{prop}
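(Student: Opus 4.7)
The plan is to compute $C_k(\varphi,0)$ by a homotopy argument in the spirit of Proposition~\ref{prop12}, reducing $\varphi$ to a model functional whose critical groups at $0$ can be read off directly. By hypothesis $H(f)(iv)$, one has $F(z,x) = \tfrac{\alpha}{2}(x^+)^2 + \tfrac{\beta}{2}(x^-)^2 + o(x^2)$ uniformly for almost all $z \in \Omega$ as $x \to 0$, so the natural model is the quadratic asymmetric functional
$$
\Psi(u) = \frac{1}{p}\|Du\|_p^p + \frac{1}{2}\|Du\|_2^2 - \frac{\alpha}{2}\|u^+\|_2^2 - \frac{\beta}{2}\|u^-\|_2^2,
$$
together with the affine homotopy $h(t,u) = (1-t)\varphi(u) + t\Psi(u)$ for $t \in [0,1]$.

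The heart of the proof is a uniform local non-bifurcation statement: there exists $r > 0$, independent of $t$, such that $K_{h(t,\cdot)} \cap B_r = \{0\}$ for every $t \in [0,1]$. I would argue by contradiction, taking sequences $t_n \in [0,1]$ and $u_n \in W^{1,p}_0(\Omega)\setminus\{0\}$ with $u_n \to 0$ and $h_u(t_n,u_n) = 0$, setting $v_n = u_n/\|u_n\|$, and extracting a subsequence with $v_n \xrightharpoonup{w} v$. Choosing the test function $h = v_n - v$ in the equation for $v_n$, using the $(S)_+$ property of $A_p$ from Proposition~\ref{prop3}, and exploiting that $p > 2$ makes the Laplacian contribution vanish after scaling by $\|u_n\|^{p-1}$, one obtains strong convergence $v_n \to v$ with $\|v\| = 1$. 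Passing to the limit in the linearized equation—with the quotients $f(z,u_n)/u_n$ governed near $0$ by the asymptotic values $\alpha$ and $\beta$ from $H(f)(iv)$, combined with the same asymptotic values prescribed by $\Psi$—one concludes that $v$ is a nontrivial solution of the Fu\v{c}ik problem $-\Delta v = \alpha v^+ - \beta v^-$ in $\Omega$, $v|_{\partial\Omega} = 0$, forcing $(\alpha,\beta) \in \Sigma_2$. This contradicts the non-resonance position $\alpha < \hat\lambda_1(2) < \beta < \hat\lambda_2(2)$: the open rectangle $(0,\hat\lambda_1(2)) \times (\hat\lambda_1(2),\hat\lambda_2(2))$ lies strictly to the left of the trivial Fu\v{c}ik line $\{\hat\lambda_1(2)\} \times \mathbb{R}$ and strictly below the first nontrivial curves $C_{2,1}, C_{2,2}$ through $(\hat\lambda_2(2),\hat\lambda_2(2))$, hence meets $\Sigma_2$ only trivially.

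With the uniform non-bifurcation condition in hand, the homotopy invariance of critical groups (the noncritical-interval argument used in Proposition~\ref{prop12}, based on Chang \cite[Theorem~5.1.21]{8}) gives $C_k(\varphi,0) = C_k(\Psi,0)$ for all $k \in \NN_0$. The computation of $C_k(\Psi,0)$ is then carried out directly: using the positive $p$-homogeneity of $\frac{1}{p}\|Du\|_p^p$ and the $2$-homogeneity of the remaining terms, the sublevel set $\Psi^0 \cap B_r$ can be analyzed through the scaling $t \mapsto \Psi(tu)$ on the unit sphere of $W^{1,p}_0(\Omega)$, yielding the claimed $C_k(\Psi,0) = \delta_{k,0}\ZZ$. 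The principal technical obstacle is the uniform non-bifurcation step: the admissibility of the homotopy must be preserved even though the quadratic coefficient on the negative side of the linearization crosses $\hat\lambda_1(2)$ along the path, and the delicate interplay between the $p$-Laplacian, the Laplacian, and the local monotonicity condition on $x \mapsto f(z,x) + \hat{\xi}_\rho x^{p-1}$ from $H(f)(iv)$ is what ultimately rules out a bifurcating branch from the trivial solution.
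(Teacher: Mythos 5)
Your overall strategy is the paper's: deform $\varphi$ near $0$ to an asymmetric quadratic model, rule out bifurcating critical points along the homotopy by a blow-up argument, and use the non-resonance position of $(\alpha,\beta)$ relative to $\Sigma_2$. But two of the load-bearing steps are, respectively, broken and missing. First, the non-bifurcation step. After dividing the equation $h'_u(t_n,u_n)=0$ by $\|u_n\|$ (the only scaling compatible with the $(p-1)$-linear term $f$ being \emph{linear} near $0$), the $p$-Laplacian appears with coefficient $\|u_n\|^{p-2}\rightarrow 0$; it is the $p$-Laplacian that degenerates, not the Laplacian, and your statement that scaling by $\|u_n\|^{p-1}$ makes ``the Laplacian contribution vanish'' is backwards (that scaling makes the Laplacian term blow up). Consequently the $(S)_+$ property of $A_p$ gives you nothing: the best you can extract from testing with $v_n-v$ is strong convergence in $H^1_0(\Omega)$ via the monotonicity of $A$, and that does \emph{not} preserve the normalization $\|v_n\|_{W^{1,p}_0}=1$, so you cannot conclude $v\neq 0$ and the contradiction with $(\alpha,\beta)\notin\Sigma_2$ never materializes. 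The paper closes exactly this gap with nonlinear regularity theory: uniform $L^\infty$ bounds (Ladyzhenskaya--Uraltseva) followed by uniform $C^{1,\vartheta}_0(\overline\Omega)$ bounds (Lieberman) give $y_n\rightarrow y$ in $C^1_0(\overline\Omega)$, hence in $W^{1,p}_0(\Omega)$, hence $\|y\|=1$. Also, the monotonicity condition on $x\mapsto f(z,x)+\hat{\xi}_\rho x^{p-1}$ plays no role here; it is used for the tangency and boundary-point theorems in Proposition \ref{prop11}, not for ruling out bifurcation.

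Second, the computation of $C_k(\Psi,0)$ for your model. Because you kept $\frac{1}{p}\|Du\|^p_p+\frac{1}{2}\|Du\|^2_2$ in $\Psi$, the functional is not positively homogeneous and the ``scaling $t\mapsto\Psi(tu)$ on the unit sphere'' is not a proof — and this is precisely the nontrivial input of the whole proposition. The paper instead takes the purely quadratic model $\hat{\Psi}_0(u)=\frac{1}{2}\|Du\|^2_2-\frac{\alpha}{2}\|u^+\|^2_2-\frac{\beta}{2}\|u^-\|^2_2$, transfers the critical groups from $W^{1,p}_0(\Omega)$ to $H^1_0(\Omega)$ by Palais' theorem (density of the embedding), and then invokes Perera \& Schechter's computation of the critical groups of the Fu\v{c}ik quadratic form at the origin. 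Note that $0$ is \emph{not} a local minimizer of $\hat{\Psi}_0$ (since $\beta>\hat{\lambda}_1(2)$, one has $\hat{\Psi}_0(-t\hat{u}_1(2))<0$ for $t>0$), so the answer genuinely depends on where $(\alpha,\beta)$ sits relative to the Fu\v{c}ik curves and cannot be read off from a sublevel-set picture in the crude way you suggest; you must either cite the Perera--Schechter classification as the paper does or reconstruct its cone/homology argument in full.
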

\begin{proof}
	Let $\alpha\in(0,\hat{\lambda}_1(2))$ and $\beta\in(\hat{\lambda}_1(2),\hat{\lambda}_2(2))$ be as postulated by hypothesis $H(f)(iv)$. We consider the $C^1$-functional $\hat{\Psi}_0:H^1_0(\Omega)\rightarrow\RR$ defined by
	$$\hat{\Psi}_0(u)=\frac{1}{2}||Du||^2_2 - \frac{\alpha}{2}||u^+||^2_2 - \frac{\beta}{2}||u^-||^2_2\quad \mbox{for all}\ u\in H^1_0(\Omega).$$
	
	Let $\Psi_0=\hat{\Psi}_0|_{W^{1,p}_0(\Omega)}$ (recall that $2<p$) and consider the homotopy
	$$h(t,u)=(1-t)\varphi(u) + t\Psi_0(u)\ \mbox{for all}\ t\in[0,1],\ \mbox{for all}\ u\in W^{1,p}_0(\Omega).$$
	
	Suppose that we can find $\{t_n\}_{n\geq1}\subseteq[0,1]$ and $\{u_n\}_{n\geq1}\subseteq W^{1,p}_0(\Omega)$ such that
	\begin{equation}\label{eq79}
		t_n\rightarrow t\in[0,1],\ u_n\rightarrow0\ \mbox{in}\ W^{1,p}_0(\Omega)\ \mbox{and}\ h'_u(t_n,u_n)=0\ \mbox{for all}\ n\in\NN.
	\end{equation}
	
	From the equality in (\ref{eq79}) we have
	\begin{eqnarray}
		(1-t_n)A_p(u_n)+A(u_n) = (1-t_n)N_f(u_n) + t_n[\alpha u^+_n - \beta u^-_n]\ \mbox{in}\ W^{-1,p'}(\Omega), \label{eq80} \\
		\mbox{for all}\ n\in\NN. \nonumber
	\end{eqnarray}
	
	Let $y_n=\frac{u_n}{||u_n||},\ n\in\NN$. Then $||y_n||=1$ for all $n\in\NN$ and so we may assume that
	\begin{equation}\label{eq81}
		y_n\xrightarrow{w}y\ \mbox{in}\ W^{1,p}_0(\Omega)\ \mbox{and}\ y_n\rightarrow y\ \mbox{in}\ L^p(\Omega).
	\end{equation}
	
	From (\ref{eq80}) we have
	\begin{equation}\label{eq82}
		(1-t_n)||u_n||^{p-2} A_p(y_n) + A(y_n) = (1-t_n)\frac{N_f(u_n)}{||u_n||} + t_n[\alpha y^+_n - \beta y^-_n]\ \mbox{for all}\ n\in\NN.
	\end{equation}
	
	Note that hypotheses $H(f)(i),(ii),(iv)$ imply that for some $c_9>0$
	\begin{align*}
		& |f(z,x)| \leq c_9[|x| + |x|^{p-1}]\ \mbox{for almost all}\ z\in\Omega,\ \mbox{for all}\ x\in\RR, \\
		\Rightarrow & \left\{\frac{N_f(u_n)}{||u_n||}\right\}_{n\geq 1}\subseteq L^{p'}(\Omega)\ \mbox{is bounded}.
	\end{align*}
	
	This fact and hypothesis $H(f)(iv)$ imply that
	\begin{equation}\label{eq83}
		\frac{N_f(u_n)}{||u_n||}\xrightarrow{w} \alpha y^+ - \beta y^-\ \mbox{in}\ L^{p'}(\Omega)\ \mbox{as}\ n\rightarrow\infty
	\end{equation}
	(see Aizicovici, Papageorgiou \& Staicu \cite{1}, proof of Proposition 16). From (\ref{eq82}) we have
	\begin{align*}
		& -(1-t_n)||u_n||^{p-2}\Delta_p y_n(z)-\Delta y_n(z) \\
		= & (1-t_n)\frac{f(z,u_n(z))}{||u_n||} + t_n[\alpha y^+_n(z) - \beta y^-_n(z)]\ \mbox{for almost all}\ z\in\Omega,\ y_n|_{\partial\Omega}=0, n\in\NN.
		\end{align*}
		
		Then (\ref{eq79}), (\ref{eq81}), (\ref{eq83}) (recall that $p>2$) and Theorem 7.1 of Ladyzhenskaya \& Uraltseva \cite[p. 286]{16}, we can find $M_8>0$ such that
		$$||y_n||_\infty\leq M_8\ \mbox{for all}\ n\in\NN.$$
		
		Then invoking Theorem 1 of Lieberman \cite{18}, we can find $\vartheta\in(0,1)$ and $M_9>0$ such that
		$$y_n\in C^{1,\vartheta}_0(\overline\Omega)\ \mbox{all}\ ||y_n||_{C^{1,\vartheta}_0(\overline\Omega)}\leq M_9\ \mbox{for all}\ n\in\NN.$$
		
		From (\ref{eq61}) and the compact embedding of $C^{1,\vartheta}_0(\overline\Omega)$ into $C^1_0(\overline\Omega)$, we have
		\begin{eqnarray}\label{eq84}
			& y_n\rightarrow y\ \mbox{in}\ C^1_0(\overline\Omega), \\
			\Rightarrow & ||y||=1\ \mbox{and so}\ y\neq0.\nonumber
		\end{eqnarray}
		
		If in (\ref{eq82}) we pass to the limit as $n\rightarrow\infty$ and use (\ref{eq79}), (\ref{eq83}), (\ref{eq84}) and the fact that $p>2$, we obtain
		\begin{align*}
			& A(y) = \alpha y^+ - \beta y^-\ \mbox{in}\ W^{-1,p'}(\Omega), \\
			\Rightarrow & -\Delta y(z) = \alpha y^+(z) - \beta y^-(z)\ \mbox{for almost all}\ z\in\Omega,\ y|_{\partial\Omega}=0.
		\end{align*}
		
		Since $0<\alpha<\hat{\lambda}_1(2)<\beta<\hat{\lambda}_2(2)$ (see hypothesis $H(f)(iv)$), $(\alpha,\beta)\not\in\Sigma_2$ and so
		$$y=0,$$
		a contradiction (recall that $||y||=1$). Therefore (\ref{eq79}) cannot occur and then the homotopy invariance of critical groups (see Gasinski \& Papageorgiou \cite[Theorem 5.125, p. 836]{14}) implies that
		\begin{equation}\label{eq85}
			C_k(\varphi,0) = C_k(\Psi_0,0)\ \mbox{for all}\ k\in\NN_0.
		\end{equation}
		
		Since $W^{1,p}_0(\Omega)$ is dense in $H^1_0(\Omega)$ from Palais \cite[Theorem 16]{20} (see also Chang \cite[p. 14]{7}), we have
		\begin{equation}\label{eq86}
			C_k(\Psi_0,0)=C_k(\hat{\Psi}_0,0)\ \mbox{for all}\ k\in\NN_0.
		\end{equation}
		
		But Theorem 1.1(a) of Perera \& Schechter \cite{26} implies that
		\begin{align*}
			& C_k(\hat{\Psi}_0,0)=\delta_{k,0}\ZZ\ \mbox{for all}\ k\in\NN_0, \\
			\Rightarrow & C_k(\varphi,0) = \delta_{k,0}\ZZ\ \mbox{for all}\ k\in\NN_0\ \mbox{(see (\ref{eq85}), (\ref{eq86})).}
		\end{align*}
\end{proof}
Also, we have the following property.
\begin{prop}\label{prop16}
	If hypotheses $H(f)$ hold, then $C_k(\varphi_-,0)=0$ for all $k\in\NN_0$.
\end{prop}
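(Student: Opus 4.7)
The plan is to imitate the scheme of Proposition~\ref{prop15}: interpolate $\varphi_-$ to a purely asymmetric quadratic model at the origin, transfer to $H^1_0(\Omega)$ via Palais's theorem, and then compute the model. Because $f_-(z,\cdot)\equiv 0$ on $[0,+\infty)$ and $\lim_{x\to 0^-}f(z,x)/x=\beta$ uniformly in $z$ by hypothesis $H(f)(iv)$, the natural model is
$$\hat\Psi_-(u)=\tfrac{1}{2}\|Du\|_2^2-\tfrac{\beta}{2}\|u^-\|_2^2\quad\text{on }H^1_0(\Omega),$$
with restriction $\Psi_-=\hat\Psi_-|_{W^{1,p}_0(\Omega)}$.

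The first step is the reduction. Setting $h(t,u)=(1-t)\varphi_-(u)+t\,\Psi_-(u)$ and following the argument of Proposition~\ref{prop15}, suppose by contradiction that $t_n\to t\in[0,1]$ and $u_n\to 0$ in $W^{1,p}_0(\Omega)$ satisfy $h'_u(t_n,u_n)=0$, and set $y_n=u_n/\|u_n\|$. Using the growth estimate~(\ref{eq17}), hypothesis $H(f)(iv)$ (so that $N_{f_-}(u_n)/\|u_n\|\to -\beta y^-$ weakly in $L^{p'}(\Omega)$), and the regularity boost of Ladyzhenskaya--Uraltseva~\cite{16} and Lieberman~\cite{18}, a subsequence of $\{y_n\}$ converges strongly in $C^1_0(\overline\Omega)$ to some nonzero $y$ solving
$$-\Delta y(z)=-\beta y^-(z)\ \text{in}\ \Omega,\qquad y|_{\partial\Omega}=0.$$
Testing against $y^+$ gives $y\le 0$, whence $-\Delta y=\beta y$; since $\beta\in(\hat\lambda_1(2),\hat\lambda_2(2))\setminus\hat\sigma(2)$, we obtain $y\equiv 0$, contradicting $\|y\|=1$. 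Homotopy invariance of critical groups and Palais's theorem (as in (\ref{eq85})--(\ref{eq86})) then deliver $C_k(\varphi_-,0)=C_k(\hat\Psi_-,0)$ for every $k\in\NN_0$.

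The second, decisive step is to show $C_k(\hat\Psi_-,0)=0$ for all $k\in\NN_0$; the strategy is to prove that both $\hat\Psi_-^0\cap B_r$ and $\hat\Psi_-^0\cap B_r\setminus\{0\}$ are contractible. The first is immediate from the positive $2$-homogeneity of $\hat\Psi_-$: the sublevel set is a cone at $0$, contracted to $\{0\}$ by $(s,u)\mapsto(1-s)u$. For the punctured cone I would use the continuous deformation $H(s,u)=(1-s)u^+-u^-$, which lies in $\hat\Psi_-^0$ because the disjoint-support identity $\int_\Omega(Du^+,Du^-)_{\RR^N}dz=0$ yields
$$\hat\Psi_-(H(s,u))=\tfrac{(1-s)^2}{2}\|Du^+\|_2^2+\tfrac{1}{2}\|Du^-\|_2^2-\tfrac{\beta}{2}\|u^-\|_2^2\le\hat\Psi_-(u)\le 0,$$
and which avoids $0$ because nonzero $u\ge 0$ has $\hat\Psi_-(u)>0$. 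The retract $B=\{u\le 0,\ u\neq 0,\ \|Du\|_2^2\le\beta\|u\|_2^2\}\cap B_r$ is then collapsed to the ray through $-\hat u_1(2)$ by the straight line toward $-\hat u_1(2)/\|D\hat u_1(2)\|_2$; its admissibility reduces, via the eigenvalue identity $\langle Dv,D\hat u_1(2)\rangle_2=\hat\lambda_1(2)\langle v,\hat u_1(2)\rangle_2$, to the termwise nonnegativity of the bilinear expansion of $\beta\|\cdot\|_2^2-\|D\cdot\|_2^2$, which is guaranteed by $\beta-\hat\lambda_1(2)>0$ and $\langle v,\hat u_1(2)\rangle_2\ge 0$ for $v\le 0$. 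With both sets contractible, the long exact sequence of the pair gives $C_k(\hat\Psi_-,0)=0$ for every $k$.

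The main obstacle is this second step: both explicit deformations must be verified to stay inside the nonconvex, nonsmooth cone $\hat\Psi_-^0$ and avoid the vertex, and the final retraction requires the careful eigenvalue--Rayleigh estimate above. As an alternative, one could cite the asymmetric critical-group computation of Perera--Schechter~\cite{26} at parameters $(\alpha,\beta)\notin\Sigma_2$; the condition $(0,\beta)\notin\Sigma_2$ used here follows at once because any nontrivial solution of $-\Delta u=-\beta u^-$ is nonpositive (testing with $u^+$ yields $\|Du^+\|_2^2=0$), which would then force $\beta\in\hat\sigma(2)$.
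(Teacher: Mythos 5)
Your proof is correct, and while the first (reduction) step coincides with the paper's --- the same homotopy $h_-(t,u)=(1-t)\varphi_-(u)+t\Psi_-(u)$, the same limit equation $-\Delta y=-\beta y^-$ forcing $y\le 0$ and then $y=0$ because $\beta\in(\hat{\lambda}_1(2),\hat{\lambda}_2(2))$ --- your decisive second step is genuinely different from the paper's. The paper disposes of $C_k(\Psi_-,0)$ exactly as in the last part of Proposition \ref{prop12}: it perturbs $\Psi_-$ by a linear term $t\int_\Omega u\,dz$, shows via testing with $u^+$, the strong maximum principle and Picone's identity that the perturbed functional has \emph{no} critical points for $t\in(0,1]$ (since $\hat{\lambda}_1(2)<\beta$), and then invokes homotopy invariance together with the noncritical interval theorem to conclude that all critical groups vanish. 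You instead compute the relative homology directly: the sublevel set is a cone by positive $2$-homogeneity, the punctured sublevel set deformation retracts via $(s,u)\mapsto(1-s)u^+-u^-$ onto the negative-cone piece $B$ (your verifications that the deformation stays in $\hat\Psi_-^0$, avoids $0$ because $u^-\neq 0$ there, and that $B$ contracts along straight lines to $-\hat{u}_1(2)$ using $\langle Dv,D\hat{u}_1(2)\rangle=\hat{\lambda}_1(2)\langle v,\hat{u}_1(2)\rangle$, $\langle v,\hat u_1(2)\rangle\le 0$ reversed in sign, and $\beta>\hat{\lambda}_1(2)$ all check out), and then the long exact sequence of the pair kills every group. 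What the paper's route buys is brevity, since the perturbation--Picone machinery is already set up in Proposition \ref{prop12} and one never has to manipulate the nonconvex sublevel sets explicitly; what your route buys is self-containedness (no noncritical interval theorem, no maximum principle, no Picone identity) and a transparent geometric explanation of \emph{why} the groups vanish, namely that the punctured sublevel set retracts onto a contractible negative cone. Your closing alternative via Perera--Schechter is dispensable and should be treated with care (their theorem is stated for specific regions relative to $\Sigma_2$, and $(0,\beta)$ sits on the boundary of their usual parameter range), but since your main argument does not use it, this does not affect the proof.
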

\begin{proof}
	In this case we consider the $C^1$-functional $\hat{\Psi}:H^1_0(\Omega)\rightarrow\RR$ defined by
	$$\hat{\Psi}_-(u) = \frac{1}{2}||Du||^2_2 - \beta||u^-||^2_2\ \mbox{for all}\ u\in H^1_0(\Omega).$$
	
	We set $\Psi_-=\hat{\Psi}_-|_{W^{1,p}_0(\Omega)}$ (recall that $p>2$) and consider the homotopy
	$$h_-(t,u)=(1-t)\varphi_-(u) + t\Psi_-(u)\ \mbox{for all}\ t\in[0,1],\ \mbox{for all}\ u\in W^{1,p}_0(\Omega).$$
	
	As in the proof of Proposition \ref{prop15}, via the homotopy invariance of critical groups, we have
	$$C_k(\varphi_-,0)=C_k(\Psi_-,0)\ \mbox{for all}\ k\in\NN_0.$$
	
	Recalling that the nonprincipal eigenfunction of $(-\Delta, H^1_0(\Omega))$ are nodal and since $\beta>\hat{\lambda}_1(2)$, we infer that $K_{\Psi_-}=\{0\}$. Moreover, as in the last part of the proof of Proposition \ref{prop12}, using Picone's identity, we have
	\begin{eqnarray*}
	&&C_k(\Psi_-,0)=0\ \mbox{for all}\ k\in\NN_0,\\
	&\Rightarrow & C_k(\varphi_-,0)=0\ \mbox{for all}\ k\in\NN_0.
	\end{eqnarray*}
This completes the proof.
\end{proof}

Now we are ready for our first multiplicity theorem. Recall that at the beginning of this section, we have introduced  an extra hypothesis $H_0$, which says that the constant sign solutions of (\ref{eq1}) are finite. This is equivalent to saying that
\begin{align*}
	& K_{\varphi_-}=\{v_i\}^m_{i=1}\cup\{0\}\subseteq(-{\rm int}\, C_+)\cup\{0\}, \\
	& K_{\varphi_+}=\{u_l\}^n_{l=1}\cup\{0\}\subseteq {\rm int}\, C_+\cup\{0\}.
\end{align*}

From Proposition \ref{prop11} we know that
$$m,n\in\NN.$$

Then we have the following multiplicity theorem.
\begin{theorem}\label{th17}
	If hypotheses $H(f),H_0$ hold, then problem (\ref{eq1}) has at least three nontrivial smooth solutions
	$$u_0\in {\rm int}\, C_+,\ v_0\in-{\rm int}\, C_+\ \mbox{and}\ y_0\in C^1_0(\overline\Omega)\ \mbox{nodal}.$$
\end{theorem}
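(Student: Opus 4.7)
The solutions $u_0\in{\rm int}\,C_+$ and $v_0\in-{\rm int}\,C_+$ are already provided by Proposition \ref{prop11}, so the task is to produce the nodal solution $y_0$. The plan is a Morse-theoretic argument by contradiction, building on the critical-group data accumulated in Propositions \ref{prop10}--\ref{prop16}.

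First I collect the ingredients. Combining Propositions \ref{prop10}, \ref{prop12}, \ref{prop13}, \ref{prop14}, \ref{prop15}, and \ref{prop16} yields $C_k(\varphi,0)=C_k(\varphi_+,0)=\delta_{k,0}\ZZ$, $C_k(\varphi_-,0)=0$, $C_k(\varphi,\infty)=C_k(\varphi_+,\infty)=0$, and $C_k(\varphi_-,\infty)=\delta_{k,0}\ZZ$ for all $k\in\NN_0$. Next, for every $u\in K_{\varphi_+}\setminus\{0\}$ the nonlinear maximum principle forces $u\in{\rm int}\,C_+$, so a $C^1_0(\overline\Omega)$-ball around $u$ lies in the region where $\varphi\equiv\varphi_+$; combined with Proposition \ref{prop2}, this yields the localization identity $C_k(\varphi,u)=C_k(\varphi_+,u)$, and symmetrically $C_k(\varphi,v)=C_k(\varphi_-,v)$ for $v\in K_{\varphi_-}\setminus\{0\}$.

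Now suppose for contradiction that $\varphi$ has no nodal critical point. Since every nonnegative (resp.\ nonpositive) weak solution of \eqref{eq1} lies in $K_{\varphi_+}$ (resp.\ $K_{\varphi_-}$), this forces $K_\varphi=\{0\}\cup\{u_1,\dots,u_n\}\cup\{v_1,\dots,v_m\}$, both latter sets being finite by hypothesis $H_0$. Applying the Morse relation \eqref{eq7} to $\varphi_+$, $\varphi_-$, and $\varphi$ in turn (with respective remainder polynomials $Q_+,Q_-,Q$, and using the localization identity to identify the Morse polynomials across the three relations) produces
\begin{align*}
1+\sum_{l=1}^n M(t,u_l)&=(1+t)Q_+(t),\\
\sum_{i=1}^m M(t,v_i)&=1+(1+t)Q_-(t),\\
1+\sum_{l=1}^n M(t,u_l)+\sum_{i=1}^m M(t,v_i)&=(1+t)Q(t).
\end{align*}
Substituting the first two relations into the third gives $(1+t)[Q(t)-Q_+(t)-Q_-(t)]=1$; evaluation at $t=-1$ yields $0=1$, the desired contradiction. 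Hence $\varphi$ admits a critical point $y_0\notin\{0\}\cup K_{\varphi_+}\cup K_{\varphi_-}$, which must therefore be sign-changing, and $y_0\in C^1_0(\overline\Omega)$ by the Lieberman regularity theory \cite{18}.

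The only real technical obstacle is the localization identity $C_k(\varphi,u_l)=C_k(\varphi_+,u_l)$: one must verify that critical groups computed in the $C^1_0(\overline\Omega)$-topology agree with those computed in the $W^{1,p}_0(\Omega)$-topology at a critical point in ${\rm int}\,C_+$, a standard consequence of Proposition \ref{prop2} and a deformation argument on sublevel sets. Once this is in place, the Morse-theoretic bookkeeping above closes the argument cleanly.
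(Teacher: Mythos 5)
Your proposal is correct and follows essentially the same route as the paper: the same localization identities $C_k(\varphi,u_l)=C_k(\varphi_+,u_l)$, $C_k(\varphi,v_i)=C_k(\varphi_-,v_i)$, the same critical-group data from Propositions \ref{prop10} and \ref{prop12}--\ref{prop16}, and the Morse relation (\ref{eq7}) applied to $\varphi_+$, $\varphi_-$ and $\varphi$, evaluated at $t=-1$. The only (immaterial) difference is that you argue by contradiction assuming $d=0$, whereas the paper keeps the nodal critical points in the sum and concludes directly that $\sum_{j}\chi(y_j)=-1$, hence $d\geq 1$.
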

\begin{proof}
	From Proposition \ref{prop11} we already have two nontrivial constant sign smooth solutions
	$$u_0\in {\rm int}\, C_+\ \mbox{and}\ v_0\in- {\rm int}\, C_+.$$
	
	By hypothesis $H_0$ we have
	\begin{align*}
		& K_{\varphi_-}=\{v_i\}^m_{i=1}\cup\{0\}\subseteq(-{\rm int}\, C_+)\cup\{0\}, \\
		& K_{\varphi_+}=\{u_l\}^n_{l=1}\cup\{0\}\subseteq {\rm int}\, C_+\cup\{0\}.
	\end{align*}
	
	We set
	\begin{align*}
		& \chi_-(v_i)=\sum_{k\geq0}(-1)^k {\rm rank}\, C_k(\varphi_-, v_i),\quad \ i=1,\dots,m, \\
		& \chi_+(u_l)=\sum_{k\geq0}(-1)^k {\rm rank}\, C_k(\varphi_+, u_l),\quad \ l=1,\dots,n, \\
		& \chi(v_i)=\sum_{k\geq0}(-1)^k {\rm rank}\, C_k(\varphi, v_i) \\
		& \chi(u_l)=\sum_{k\geq0}(-1)^k {\rm rank}\, C_k(\varphi, u_l).
	\end{align*}
	
	Since $v_i\in- {\rm int}\, C_+$ and $u_l\in {\rm int}\, C_+$, we have
	\begin{equation}\label{eq87}
		\chi_-(v_i) = \chi(v_i)\ \mbox{for all}\ i=1,\dots,m\ \mbox{and}\ \chi_+(u_l) = \chi(u_l)\ \mbox{for all}\ l=1,\dots,n.
	\end{equation}
	
	From Propositions \ref{prop10}, \ref{prop15}, \ref{prop16}, we have
	\begin{equation}\label{eq88}
		\chi_+(0)=1,\ \chi(0)=1,\ \chi_-(0)=0.
	\end{equation}
	
	Let $\{y_i\}^d_{j=1}\subseteq K_\varphi\subseteq C^1_0(\overline\Omega)$ (nonlinear regularity theory) be the set of nodal solutions of (\ref{eq1}) (if there are no nodal solutions, then $d=0$). From the Morse relation (see (\ref{eq7})), we have
	\begin{align}
		& \chi_-(0) + \sum^m_{i=1}\chi_-(v_i) = 0+\sum^m_{i=1}\chi(v_j)=1\ \mbox{(see (\ref{eq87}), (\ref{eq88}) and Proposition \ref{prop14}),} \label{eq89} \\
		& \chi_+(0) + \sum^n_{l=1}\chi_+(u_l) = 1+\sum^n_{l=1}\chi(u_l)=0\ \mbox{(see (\ref{eq87}), (\ref{eq88}) and Proposition \ref{prop13}),} \label{eq90} \\
		& \chi(0) + \sum^m_{i=1}\chi(v_i) + \sum^n_{l=1}\chi(u_l) + \sum^d_{j=1}\chi(y_j)=0\ \mbox{(see Proposition \ref{prop12})}, \nonumber \\
		\Rightarrow & 1 + 1 - 1 + \sum^d_{j=1}\chi(y_j)=0\ \mbox{(see (\ref{eq88}), (\ref{eq89}), (\ref{eq90}))}, \nonumber\\
		\Rightarrow & \sum^d_{j=1}\chi(y_j)=-1\ \mbox{and so}\ d\geq1.\nonumber
	\end{align}
	
	This means that problem (\ref{eq1}) admits at least one nodal solution $y_0\in C^1_0(\overline\Omega)$.
\end{proof}
We can drop hypothesis $H_0$ at the expense of strengthening the regularity of $f(z,\cdot)$. Then we can still have a three nontrivial solutions multiplicity theorem, but without providing sign information about the third nontrivial smooth solution.

The new hypotheses on $f(z,x)$ are the following:

\smallskip
$H(f)'$: $f:\Omega\times\RR\rightarrow\RR$ is a Carath\'eodory function such that for almost all $z\in\Omega$, $f(z,0)=0$, $f(z,\cdot)\in C^1(\RR\backslash\{0\})$ and hypotheses $H(f)'(i)\rightarrow(iv)$ are the same as the corresponding hypotheses $H(f)(i)\rightarrow(iv)$
\begin{theorem}\label{th18}
	If hypotheses $H(f)'$ hold, then problem (\ref{eq1}) admits at least three nontrivial smooth solutions
	$$v_0\in {\rm int}\, C_+,\ v_0\in -{\rm int}\, C_+\ \mbox{and}\ y_0\in C^1_0(\overline\Omega).$$
\end{theorem}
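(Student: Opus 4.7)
The strategy is to drop the finiteness hypothesis $H_0$ by exploiting the extra regularity $f(z,\cdot)\in C^1(\RR\setminus\{0\})$, which upgrades $\varphi_+$ (respectively $\varphi$) to a $C^2$-functional near any nontrivial critical point lying in the interior of the positive (respectively negative) cone. This replaces the combinatorial counting used in Theorem \ref{th17} with a direct Morse-identity argument that does not require the full critical sets $K_{\varphi_\pm}$ to be finite.

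First I would invoke Proposition \ref{prop11} (whose proof uses only hypotheses $H(f)$) to extract $u_0\in {\rm int}\,C_+$ as a mountain pass critical point of $\varphi_+$ and $v_0\in -{\rm int}\,C_+$ as a global minimiser of $\varphi_-$. If $K_\varphi$ is infinite, the conclusion of the theorem is immediate, so I may assume $K_\varphi$ is finite. Since $u_0\in {\rm int}\,C_+$, small $C^1_0(\overline{\Omega})$-neighbourhoods of $u_0$ consist of strictly positive functions, so $\varphi$ and $\varphi_+$ agree on such a neighbourhood; combining this with Proposition \ref{prop2} gives
$$C_k(\varphi,u_0)=C_k(\varphi_+,u_0),\qquad k\in\NN_0,$$
and the analogous identity at $v_0$ with $\varphi_-$.

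Next I would compute those local critical groups. Because $u_0>0$ on $\overline{\Omega}$ and $f(z,\cdot)\in C^1((0,+\infty))$, $\varphi_+$ is a $C^2$-functional on a neighbourhood of $u_0$. Since $u_0$ is a mountain pass critical point of $\varphi_+$, the standard Morse-theoretic characterisation of mountain pass points of $C^2$-functionals (see Chang \cite{8} and Motreanu, Motreanu \& Papageorgiou \cite{19}) yields
$$C_k(\varphi_+,u_0)=\delta_{k,1}\ZZ,\qquad k\in\NN_0.$$
The same cone argument shows that $v_0$ is a local $W^{1,p}_0(\Omega)$-minimiser of $\varphi$ itself, hence $C_k(\varphi,v_0)=\delta_{k,0}\ZZ$. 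Propositions \ref{prop12} and \ref{prop15} furnish $C_k(\varphi,\infty)=0$ and $C_k(\varphi,0)=\delta_{k,0}\ZZ$. Assume, for contradiction, that $K_\varphi=\{0,u_0,v_0\}$. Then the Morse relation (\ref{eq7}) evaluated at $t=-1$ reduces to
$$1+(-1)+1=0,$$
a contradiction. Therefore a fourth critical point $y_0\in K_\varphi\setminus\{0,u_0,v_0\}$ exists, and the nonlinear regularity of Lieberman \cite{18} places $y_0\in C^1_0(\overline{\Omega})$.

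The main obstacle will be the precise computation $C_k(\varphi_+,u_0)=\delta_{k,1}\ZZ$: the mountain pass characterisation alone yields only $C_1(\varphi_+,u_0)\neq 0$, which is not enough to determine the Euler characteristic $\chi(u_0)=-1$ needed to close the Morse identity. Pinning down all higher critical groups at $u_0$ is precisely what the $C^2$-regularity near $u_0$ (provided by $H(f)'$) buys, via a Gromoll--Meyer type reduction for mountain pass critical points; without this information, the alternating sum in (\ref{eq7}) cannot be forced into the sharp contradiction that delivers $y_0$.
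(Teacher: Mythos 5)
Your proposal is correct and follows essentially the same route as the paper: Proposition \ref{prop11} for the two constant-sign solutions, the identification $C_k(\varphi,u_0)=C_k(\varphi_+,u_0)$ via $u_0\in{\rm int}\,C_+$, the upgrade from $C_1(\varphi,u_0)\neq0$ (mountain pass) to $C_k(\varphi,u_0)=\delta_{k,1}\ZZ$ using the $C^2$-regularity supplied by $H(f)'$ (the paper cites Papageorgiou \& R\u{a}dulescu \cite{21} for exactly this step, which is the Gromoll--Meyer/shifting-type result you anticipate as the main obstacle), $C_k(\varphi,v_0)=\delta_{k,0}\ZZ$ from the local-minimizer property via Proposition \ref{prop2}, and the Morse relation at $t=-1$ combined with Propositions \ref{prop12} and \ref{prop15} to force a fourth critical point. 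The only cosmetic difference is that the paper works with $\varphi\in C^2(W^{1,p}_0(\Omega)\setminus\{0\})$ globally rather than localizing the $C^2$-regularity of $\varphi_+$ near $u_0$.
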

\begin{proof}
	Again, from Proposition \ref{prop11} we already have two nontrivial constant sign smooth solutions
	$$u_0\in {\rm int}\,C_+\ \mbox{and}\ v_0\in -{\rm int}\,C_+.$$
	
	From the proof of Proposition \ref{prop11} we know that $u_0\in K_{\varphi_+}$ is of mountain pass type. Since $u_0\in {\rm int}\,C_+$, we have
	\begin{align*}
		& C_k(\varphi, u_0) = C_k(\varphi_+, u_0)\ \mbox{for all}\ k\in\NN \\
		\mbox{and}\ & C_1(\varphi_+,u_0)\neq0 \\
		& \mbox{(see Motreanu, Motreanu \& Papageorgiou \cite[Corollary 6.81, p. 168]{19})}.
	\end{align*}
	
	Therefore
	\begin{equation}\label{eq91}
		C_1(\varphi, u_0)\neq0.
	\end{equation}
	
	But $\varphi\in C^2(W^{1,p}_0(\Omega)\backslash\{0\})$. Hence from (\ref{eq91}) and Papageorgiou \& R\u{a}dulescu \cite{21} we have
	\begin{equation}\label{eq92}
		C_k(\varphi,u_0)=\delta_{k,1}\ZZ\ \mbox{for all}\ k\in\NN_0.
	\end{equation}
	
	The negative solution $v_0\in K_{\varphi_-}$ is a global minimizer of $\varphi_-$. Since $\varphi_-|_{-C_+}=\varphi|_{-C_+}$ and $v_0\in -{\rm int}\,C_+$, it follows that $v_0$ is a local $C^1_0(\overline\Omega)$-minimizer of $\varphi$. Invoking Proposition \ref{prop2}, we infer that $v_0$ is a local $W^{1,p}_0(\Omega)$-minimizer of $\varphi$. Therefore
	\begin{equation}\label{eq93}
		C_k(\varphi,v_0)=\delta_{k,0}\ZZ\ \mbox{for all}\ k\in\NN_0.
	\end{equation}
	
	From Propositions \ref{prop12} and \ref{prop15}, we have
	\begin{align}
		& C_k(\varphi,\infty)=0\ \mbox{for all}\ k\in\NN_0, \label{eq94} \\
		& C_k(\varphi, 0)=\delta_{k,0}\ZZ\ \mbox{for all}\ k\in\NN_0. \label{eq95}
	\end{align}
	
	Suppose that $K_\varphi=\{u_0,v_0,0\}$. Then the Morse relation with $t=-1$ and (\ref{eq92}), (\ref{eq93}), (\ref{eq94}), (\ref{eq95}) imply that
	\begin{align*}
		& (-1)^1 + (-1)^0 + (-1)^0 = 0, \\
		\Rightarrow & (-1)^1=0,\ \mbox{a contradiction}.
	\end{align*}
	
	So, there exists $y_0\in K_\varphi,\ y_0\not\in\{u_0,v_0,0\}$. Hence $y_0$ is the third nontrivial solution of problem (\ref{eq1}) and the nonlinear regularity theory implies that $y_0\in C^1_0(\overline\Omega).$
\end{proof}

\medskip
{\bf Acknowledgments.} This research was supported by the Slovenian Research Agency grants P1-0292, J1-8131, J1-7025, and N1-0064. V.D. R\u adulescu acknowledges the support through a grant of the Ministry of Research and Innovation, CNCS--UEFISCDI, project number PN-III-P4-ID-PCE-2016-0130, within PNCDI III.


\begin{thebibliography}{99}
\bibitem{1} S. Aizicovici, N.S. Papageorgiou, V. Staicu, {\it Degree Theory for Operators of Monotone Type and Nonlinear Elliptic Equations with Inequality Constraints}, Memoirs Amer. Math. Soc., Vol. 196 (2008), No. 905, pp. 70.

\bibitem{2} S. Aizicovici, N.S. Papageorgiou, V. Staicu, On $p$-superlinear equations with nonhomogeneous differential operator, {\it Nonlinear Differential Equations Appl. (NoDEA)} {\bf 20} (2013), 151-175.

\bibitem{3} S. Aizicovici, N.S. Papageorgiou, V. Staicu,  Nodal solutions for $(p,2)$-equations, {\it Trans. Amer. Math. Soc.} {\bf 37} (2015), 7343-7372.

\bibitem{4} W. Allegretto, Y.X. Huang, A Picone's identity for the $p$-Laplacian and applications, {\it Nonlinear Anal.} {\bf 32} (1998), 819-830.

\bibitem{5} A. Ambrosetti, P. Rabinowitz, Dual variational methods in critical point theory and applications, {\it J.~Functional Anal.} {\bf 14} (1973), 349-381.

\bibitem{6} V. Benci, P. D'Avenia, D. Fortunato, L. Pisani, Solutions in several space dimensions: Derrick's problem and infinitely many solutions, {\it Arch. Ration. Mech. Anal.} {\bf 154} (2000), 297-324.

\bibitem{7} K.C. Chang, {\it Infinite Dimensional Morse Theory and Multiple Solution Problems}, Birkh\"{a}user, Boston, 1993.

\bibitem{8} K.C. Chang, {\it Methods in Nonlinear Analysis}, Springer, Berlin, 2005.

\bibitem{9} L. Cherfils, Y. Ilyasov, On the stationary solutions of generalized reaction diffusion equations with $p\& q$ Laplacian, {\it  Commun. Pure Appl. Anal.} {\bf 4} (2005), 9-22.

\bibitem{10} S. Cingolani, M. Degiovanni, Nontrivial solutions for $p$-Laplace equations with right-hand side having $p$-linear growth at infinity, {\it Comm. Partial Differential Equations} {\bf 30} (2005), 1191-1203.

\bibitem{11} G. D'Agui, S. Marano, N.S. Papageorgiou, Multiple solutions to a Robin problem with indefinite weight and asymmetric reaction, {\it J. Math. Anal. Appl.} {\bf 433} (2016), 1821-1845.

\bibitem{12} L. Gasinski, N.S. Papageorgiou, {\it Nonlinear Analysis}, Chapman \& Hall/CRC, Boca Raton, FL, 2006.

\bibitem{13} L. Gasinski, N.S. Papageorgiou, Multiplicity of positive solutions for eigenvalue problems of $(p,2)$-equations, {\it Boundary Value Probl.} {\bf  2012} (2012), No. 152, pp. 17.

\bibitem{14} L. Gasinski, N.S. Papageorgiou, {\it Exercises on Analysis. Part 2: Nonlinear Analysis}, Springer, Heidelberg, 2016.

\bibitem{15} L. Gasinski, N.S. Papageorgiou, Asymmetric $(p,2)$-equations with double resonance, {\it  Calc. Var. Partial Differential Equations} {\bf 56} (2017), no. 3, Art. 88, 23 pp.

\bibitem{16} O. Ladyzhenskaya, N. Uraltseva, {\it Linear and Quasilinear Elliptic Equations}, Academic Press, New York, 1968.

\bibitem{17} Z. Liang, J. Su, Multiple solutions for semilinear elliptic boundary value problems with double resonance, {\it J. Math. Anal. Appl.} {\bf   354} (2009), 147-158.

\bibitem{18} G. Lieberman, Boundary regularity for solutions of degenerate elliptic equations, {\it  Nonlinear Anal.} {\bf 12} (1988), 1203-1219.

\bibitem{19} D. Motreanu, V. Motreanu, N.S. Papageorgiou, {\it Topological and Variational Methods with Applications to Nonlinear Boundary Value Problems}, Springer, New York, 2014.

\bibitem{20} R. Palais,  Homotopy theory in finite dimensional manifolds, {\it Topology} {\bf 5} (1966), 1-16.

\bibitem{21} N.S. Papageorgiou, V.D. R\u{a}dulescu, Qualitative phenomena for some classes of quasilinear elliptic equations with multiple resonance, {\it Appl. Math. Optim.} {\bf 69} (2014), 340-393.

\bibitem{23} N.S. Papageorgiou, V.D. R\u{a}dulescu, Multiple solutions with precise sign information for parametric Robin problems, {\it J. Differential Equations} {\bf 256} (2014), 2449-2479.

\bibitem{22} N.S. Papageorgiou, V.D. R\u{a}dulescu, Resonant $(p,2)$-equations with asymmetric reaction, {\it Anal. Appl.} {\bf 13} (2015), 481-506.

\bibitem{24} N.S. Papageorgiou, V.D. R\u{a}dulescu, Nonlinear nonhomogeneous Robin problems with superlinear equation, {\it Adv. Nonlin. Studies} {\bf 16} (2016), 287-300.

\bibitem{25} N.S. Papageorgiou, V.D. R\u{a}dulescu, D.D. Repov\v{s}, On a class of parametric $(p,2)$-equations, {\it Appl. Math. Optim.} {\bf 75} (2017) 193-228.

\bibitem{26} K. Perera, M. Schechter, The Fu\v{c}ik spectrum and critical groups, {\it Proc. Amer. Math. Soc.} {\bf 129} (2001), 2301-2309

\bibitem{27} P. Pucci, J. Serrin, {\it The Maximum Principle}, Birkh\"{a}user, Basel, 2007.

\bibitem{28} L. Recova, A. Rumbos, An asymmetric superlinear elliptic problem at resonance, {\it Nonlinear Anal.} {\bf 112} (2015), 181-198.

\bibitem{29} M. Schechter, {\it Minimax Systems and Critical Point Theory}, Birkh\"{a}user, Boston, 2009.

\bibitem{30} M. Sun, Multiplicity of solutions for a class of quasilinear elliptic equations at resonance, {\it J. Math. Anal. Appl.} {\bf 386} (2012), 661-668.

\bibitem{31} M. Sun, M. Zhang, J. Su, Critical groups at zero and multiple solutions for a quasilinear elliptic equation, {\it J. Math. Anal. Appl.} {\bf 428} (2015), 696-712.

\bibitem{32} D. Yang, C. Bai, Nonlinear elliptic problem of $2-q$ Laplacian type with asymmetric nonlinearities, {\it Electron. J. Differential Equations}  2014, No. 170, 13 pp.
\end{thebibliography}
\end{document}